\newcommand{\T}{\mathbb{T}}
\newcommand{\D}{\mathbb{D}}
\newcommand{\C}{\mathbb{C}}
\newcommand{\N}{\mathbb{N}}
\newcommand{\Z}{\mathbb{Z}}
\newcommand{\B}{\mathcal{B(H)}}
\newcommand{\R}{\mathcal{R}}
\newcommand{\n}{\mathcal{N}}
\newcommand{\h}{\mathcal{H}}
\newcommand{\e}{\mathcal{E}}
\newcommand{\ka}{\mathcal{K}}
\renewcommand{\Re}{\operatorname{Re}}
\newcommand{\Dom}{\mathop{\rm Dom}}
\newtheorem{theorem}{Theorem}[section]
\newtheorem{lemma}[theorem]{Lemma}
\newtheorem{proposition}[theorem]{Proposition}
\newtheorem{corollary}[theorem]{Corollary}
\theoremstyle{definition}
\newtheorem{example}[theorem]{Example}
\theoremstyle{definition}
\newtheorem{remark}[theorem]{Remark}
\newcommand{\Prec}{\stackrel{\rm H}{\prec}}
\newcommand{\PrecZ}{\stackrel{\rm Z}{\prec}}
\numberwithin{equation}{section}
\title{Classes of contractions and Harnack domination}
\author[C. Badea]{Catalin Badea}
\address{ UFR de Math\'ematiques, Laboratoire Paul Painlev\'e, UMR CNRS 8524,
	Universit\'e Lille 1, 59655 Villeneuve d'Ascq Cedex, France}
\email{badea@math.univ-lille1.fr}
\author[L. Suciu]{Laurian Suciu}
\address{Department of Mathematics, "Lucian Blaga" University
	of Sibiu, Dr. Ion Ra\c tiu 5-7, Sibiu, 550012, Romania}
\email{laurians2002@yahoo.com}
\author[D. Timotin]{Dan Timotin}
\address{Institute of Mathematics Simion Stoilow of the Romanian Academy, P.O.
	Box 1-764, 014700 Bucharest, Romania}
\email{Dan.Timotin@imar.ro}
\subjclass[2010]{Primary 47A10, 47A45; Secondary 47A20, 47A35, 47B15}
\keywords{Harnack domination, resolvent, unitary operators, convergence of iterates, ergodic properties}
\begin{document}

\begin{abstract}
Several properties of the Harnack domination of linear operators acting on Hilbert space with
		norm less or equal than one are studied. Thus, the maximal elements for this relation
		are identified as precisely the singular unitary operators, while the minimal elements are shown to be
		the isometries and the adjoints of isometries.
		We also show how a large range of properties (e.g. convergence of iterates, peripheral spectrum, ergodic properties)
		are transfered from a contraction to
		one that  Harnack dominates it.
\end{abstract}
\maketitle

\section{Introduction}  \label{Section:Intro}
\medskip

The classical Harnack inequality
for positive harmonic functions in the unit disc was generalized to some operator inequalities
for contractions (linear operators of norm no greater than one) on Hilbert space by Ion~Suciu in the 1970s. Using this
generalized inequality, a preorder relation for Hilbert space contractions, called the Harnack domination, has been introduced in
\cite{S1,S2}.
Notice also that different operator theoretical generalizations of the Harnack inequality have been proved by
Ky Fan (see \cite{kF} and the references therein); we will not consider these generalizations here.

The Harnack preorder condition between two contractions can be expressed in several equivalent forms:
majorization of the associated operator Poisson kernels, certain positive-definiteness
conditions or majorization of the semi-spectral measures (cf. Theorem \ref{thm:1.1} below). It has both analytic and geometric consequences. The preorder given by
Harnack domination
induces an equivalence relation, the corresponding equivalence classes being the Harnack parts.
The concept of Harnack parts, as well as the hyperbolic metric defined in \cite{SV}, are the analogues in the noncommutative
case of the Gleason parts and metric defined in the context of function algebras.
Different aspects of the Harnack domination of contractions have been studied by
several authors \cite{AST,C,F,KSS,S1,S2,S3,SS,Sn,SV}. An extension of Harnack domination to the operators of class $C_{\rho}$ (that is $\rho$-contractions) in the sense of \cite{SFb} appears in \cite{CS}, while in \cite{Po1, Po2} the Harnack domination in the non-commutative unit ball, or $C_{\rho}$-ball of $\B^n$ for $n>1$ was studied.

The aim of the present paper is to study several properties of Harnack domination of
contractions on a Hilbert space.  We identify the maximal elements for this relation
as precisely the singular unitary operators. We prove that the minimal elements are the isometries and the
coisometries (adjoints of isometries). We also show how a
large range of properties are transferred from a contraction to one
that Harnack dominates it.
A useful tool is the asymptotic limit $S_T$, defined as
the strong limit of the sequence
 $\{T^{*n}T^n\}_{n\in \N}$.

 The plan of the paper is the following. Section~2 is devoted
to different preliminary definitions and results. Among
other we include a new characterization of Harnack domination of an
isometry by a contraction, which is useful in the sequel. This characterization is in terms of
the behaviour of the resolvent of one operator applied to the difference of the two
operators and quickly gives the characterization of
minimal elements for the Harnack domination. In Section~3
we find the maximal elements, while Section~4 investigates the effect of
Harnack domination on certain ergodic properties as well as on the peripheral
spectrum of a contraction. In Section~5 we show how different
classes of operators are preserved by Harnack domination. The final section contains several
examples, one of them showing some spectral and structural properties
which are not preserved by Harnack domination.

\medskip

{\bf Acknowledgments.} The first author has been partially supported by the ``Laboratoire d'excellence''
Labex CEMPI  (ANR-11-LABX-0007-01) and by the EU IRSES grant PIRSES-GA-2012-318910, AOS.
The first and second author were partially supported
 by the ``Laboratoire Europ\'een Associ\'e CNRS Franco-Roumain'' MathMode.
The third author was partially supported by a grant of the Romanian National Authority for Scientific
Research, CNCS – UEFISCDI, project number PN-II-ID-PCE-2011-3-0119.

\section{Notations and preliminaries} \label{Section:Notation}

In the sequel $T,T' \in \B$ will be linear contractions acting on the complex Hilbert space $\h$;
$V$ acting on $\ka$ and $V'$ acting on $\ka'$ will denote
the minimal isometric dilations of $T$ and $T'$ respectively.
$\n(T)$ and $\R(T)$ stand for the kernel and respectively
the range of the operator $T$.
 We shall denote by $I$ the identity operator on $\h$ and by
$$T_{\lambda}  = (T-\lambda I)(I - \overline{\lambda}T)^{-1}$$
the M\"obius
transform of $T$. Here $\lambda$ is an element
of the open unit disk $\D$. For a contraction $T$ we denote by $D_T = (I-T^{\ast}T)^{1/2}$ the \emph{defect operator} and by $\mathcal{D}_T=\overline{\R(D_T)}$ the \emph{defect space} of $T$.  The \emph{Poisson kernel} of $T$ is
$$ K(T,\lambda) = (I - \overline{\lambda}T)^{-1} + (I - \lambda T^{\ast})^{-1} -I .$$
As
$$ K(T,\lambda) = (I - \lambda T^{\ast})^{-1}(I-|\lambda|^2T^{\ast}T)(I - \overline{\lambda}T)^{-1}$$
and $\|T\| \le 1$,
the Poisson kernel is a positive operator in the sense that
$$\langle K(T,\lambda)h,h\rangle \ge 0\quad (h\in \h, \lambda \in \D) .$$
We also consider the operators
$$ T^{[k]} =
\left\{
\begin{array} {r@{\quad:\quad}l} T^{k} & k \ge 0 \\
T^{\ast |k|} & k < 0 \end{array} \right. .$$

 The \emph{asymptotic limit} $S_T \in \B$ of the
 contraction $T$ (see, for instance,~\cite[Chapter 3]{K}) is the strong limit of the sequence
 $\{T^{*n}T^n\}_{n\in \N}$.  It is a positive contraction with $\|S_T\|=1$ whenever $S_T\neq 0$.
Notice that $\n(I-S_T)=\bigcap_{n\ge1} \n(I-T^{*n}T^n)$ is the
 maximal invariant subspace (of $\h$) for $T$ on which $T$ is an
 isometry, while $\n(I-S_T)\cap \n(I-S_{T^*})$ is the maximal reducing subspace for $T$ on which $T$ is unitary.

 We say that $T$ is \emph{strongly (weakly) stable} if the sequence
 $\{T^n\}_{n\in \N}$ is strongly (weakly) convergent to $0$ in $\B$ (see, for instance,~\cite{K}).
 Also, $T$ is of \emph{class} $C_{0\cdot}$ (respectively, $C_{\cdot 0}$) in
 the case that $T$ ($T^*$) is strongly stable, which means $S_T=0$
 ($S_{T^*}=0$), while $T$ is of \emph{class} $C_{00}$ if it is of class
 $C_{0\cdot}$ and of class $C_{\cdot 0}$. We say that $T$ is of \emph{class} $C_{1\cdot}$
(respectively, $C_{\cdot 1}$) if $T^nh \nrightarrow 0$ (respectively $T^{*n}h \nrightarrow 0$)
for all $0\neq h\in \h$. Also, $T$ is
of \emph{class} $C_{11}$ if both $T$ and $T^*$ are of class $C_{1\cdot}$. For two subsets $M$ and $M'$ of $\h$
we write $M\vee M'$ for the smallest closed subspace of $\h$ containing $M\cup M'$.

 A $\B$-valued \emph{semi-spectral measure} on $\T$  is a map $F$ from the $\sigma$-algebra of Borel subsets of $\T$ into $\B$ with the
property that for any $h\in \h$ the map $\sigma\mapsto \langle F(\sigma)h, h\rangle$ is a positive measure
on $\T$. For each contraction $T\in\B$  there exists a
unique $\B$-valued \emph{semi-spectral measure} $F_T$ on $\T$
 satisfying
 \[
 \langle p(T) h, k\rangle = \int_{\T} p(\lambda) d\langle F_T(\lambda) h, k\rangle
 \]
for all $h,k\in \h$ and $p$ a trigonometric polynomial. If $T$ is unitary then $F_T$ is precisely its spectral measure,
denoted also by $E_T$, while for $T=0$ the corresponding $F_0$ is $mI$ where $m$ is the normalized Lebesgue measure on $\T$.

According to \cite{S1} we say that $T$ is \emph{Harnack
dominated} by $T'$ (notation $T \Prec T'$) if there exists a positive constant $c \ge 1$ such that for any analytic polynomial $p$
verifying $\Re p(z) \ge 0$ for $|z| \le 1$ we have
\begin{equation}\label{eq:definition of Harnack}
\Re p(T) \le c \Re p(T').
\end{equation}
We say that $T$ is Harnack
dominated by $T'$ \emph{with constant} $c$ whenever we want to emphasize the constant. We say that $T$ and $T'$ are
\emph{Harnack equivalent} if
$T \Prec T'$ and $T' \Prec T$; we also say in this case that $T$ and $T'$ belong to the same \emph{Harnack part}.
It was proved in \cite{KSS} that the Harnack part of $T$ is formed by $\{T\}$ alone if and only if $T$ is
an isometry or a coisometry (the adjoint of an isometry).

$T$ is said to be \emph{maximal} for the Harnack domination if $T\Prec T'$ implies $T'=T$, and \emph{minimal} if
$T'\Prec T$ implies $T'=T$. Since maximal and minimal elements are Harnack equivalent only with themselves, it follows that they have to be
isometries or coisometries.

Several useful equivalent definitions of the Harnack domination are collected in the following known result.

\begin{theorem}[\cite{S1,S2,S3,SV,AST}]\label{thm:1.1}
 For two contractions $T,T' \in \B$ and $c\ge 1$ and with
the previous notation, the following statements are equivalent:
\begin{itemize}
 \item[(i)] $T$ is Harnack
dominated by $T'$ with constant $c^2$;
\item[(ii)] $K(T,\lambda) \le c^2 K(T',\lambda)$ for every $\lambda \in \D$;
\item[(iii)] for every finite set of vectors $\{h_k\}$ in $\h$ we have
\begin{align}\label{ec13}
\sum_{i,j} \langle T^{[i-j]}h_i, h_j\rangle \le c^2 \sum_{i,j}
\langle T^{'[i-j]}h_i,h_j\rangle.
 \end{align}
\item[(iv)] for every finite set of vectors $\{h_k\}$ in $\h$ we have
 \begin{align}\label{ec12}
\sum_{i,j} \langle V^ih_i, V^jh_j \rangle \le c^2 \sum_{i,j} \langle
V^{'i}h_i, V^{'j} h_j \rangle.
 \end{align}
\item[(v)] there is an operator $A \in \mathcal{B}(\ka',\ka)$ such that $A(\h) \subseteq \h$,
$A\mid \h = I$, $AV' = VA$ and $\|A\| \le c$.
\item[(vi)] The semi-spectral measures of $T,T'$ satisfy $F_T\le c^2 F_{T'}$.
\end{itemize}
\end{theorem}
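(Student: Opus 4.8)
The plan is to take condition (vi), the domination of the semi-spectral measures, as a hub, and to establish the five equivalences (i)$\Leftrightarrow$(vi), (ii)$\Leftrightarrow$(vi), (iii)$\Leftrightarrow$(vi), (iv)$\Leftrightarrow$(iii) and (v)$\Leftrightarrow$(iv). The computational fact behind the first three is that for every analytic polynomial $p$ one has, in the weak sense,
\[
\Re p(T)=\int_{\T}\Re p(\zeta)\,dF_T(\zeta),
\]
obtained by taking real parts in $\langle p(T)h,h\rangle=\int_{\T}p\,d\langle F_T(\cdot)h,h\rangle$, which is legitimate because $\langle F_T(\cdot)h,h\rangle$ is a positive measure; similarly $\langle T^{[k]}h,h\rangle=\int_{\T}\zeta^{k}\,d\langle F_T(\cdot)h,h\rangle$ for every $k\in\Z$.

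For (i)$\Leftrightarrow$(vi): if $F_T\le c^{2}F_{T'}$ and $\Re p\ge0$ on $\overline{\D}$, integrating the non-negative function $\Re p|_{\T}$ against the two measures gives (i). For the converse, every non-negative trigonometric polynomial $q$ equals $\Re p|_{\T}$ for the analytic polynomial $p$ obtained from $q$ by keeping the constant term, doubling the positive-frequency coefficients and deleting the rest; by the maximum principle for the harmonic function $\Re p$ one then has $\Re p\ge\min_{\T}q\ge0$ on all of $\overline{\D}$. Since non-negative trigonometric polynomials are uniformly dense in the non-negative continuous functions on $\T$, (i) forces $\langle F_Th,h\rangle\le c^{2}\langle F_{T'}h,h\rangle$ for every $h\in\h$, that is, (vi). For (ii)$\Leftrightarrow$(vi): with $p_\lambda(z)=\frac{1+\overline{\lambda}z}{1-\overline{\lambda}z}$ one has $\Re p_\lambda(T)=K(T,\lambda)$, while $\Re p_\lambda|_{\T}$ is the Poisson kernel; hence $K(T,\lambda)$ is the Poisson integral of $F_T$, and since a bounded positive operator-valued measure on $\T$ is recovered as the weak-$*$ boundary limit of the Poisson integrals of its entries, the pointwise domination of the Poisson kernels in (ii) is equivalent to (vi).

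For (iii)$\Leftrightarrow$(vi): given $h_i\in\h$, put $f(\zeta)=\sum_i\zeta^{i}h_i$; expanding $\langle F_T(\zeta)f(\zeta),f(\zeta)\rangle$ and using the moment formula above gives $\sum_{i,j}\langle T^{[i-j]}h_i,h_j\rangle=\int_{\T}\langle dF_T(\zeta)f(\zeta),f(\zeta)\rangle$, so (vi) implies (iii) by integration, while (iii) applied to $f=qh$ for a scalar trigonometric polynomial $q$ and fixed $h\in\h$ gives $\int_{\T}|q|^{2}\,d\langle F_Th,h\rangle\le c^{2}\int_{\T}|q|^{2}\,d\langle F_{T'}h,h\rangle$, whence (vi) again by Fej\'er--Riesz and density. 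For (iv)$\Leftrightarrow$(iii): since $V$ is an isometry and $P_{\h}V^{n}|_{\h}=T^{n}$, computing $V^{*j}V^{i}$ (which equals $V^{i-j}$ or $V^{*(j-i)}$) shows $\langle V^{i}h_i,V^{j}h_j\rangle=\langle T^{[i-j]}h_i,h_j\rangle$ for all $i,j\ge0$ and $h_i,h_j\in\h$, so (iii) and (iv) express the same condition. For (v)$\Leftrightarrow$(iv): if $A$ is as in (v), then $AV^{'i}h_i=V^{i}Ah_i=V^{i}h_i$, so $\sum_{i,j}\langle V^{i}h_i,V^{j}h_j\rangle=\langle A^{*}A\sum_iV^{'i}h_i,\sum_jV^{'j}h_j\rangle\le\|A\|^{2}\sum_{i,j}\langle V^{'i}h_i,V^{'j}h_j\rangle$, which is (iv); conversely, by minimality of $V'$ the finite sums $\sum_iV^{'i}h_i$ with $h_i\in\h$ are dense in $\ka'$, and (iv) says precisely that $\sum_iV^{'i}h_i\mapsto\sum_iV^{i}h_i$ is well defined with norm at most $c$, so it extends to an operator $A\in\mathcal{B}(\ka',\ka)$ that fixes $\h$ (take the $i=0$ term) and satisfies $AV'=VA$ (shift the indices).

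The hard part is the analytic passage from the Harnack or Poisson-kernel inequalities (i), (ii) back to the measure domination (vi): it uses the existence of the semi-spectral measure $F_T$, the Riesz--Herglotz and Poisson representations, and Fej\'er--Riesz together with a density argument; one must also keep the constant straight, it being $c^{2}$ in (i)--(iv) and (vi) but $c$ for $\|A\|$ in (v). Once (vi) is available, the remaining equivalences (iii)$\Leftrightarrow$(iv)$\Leftrightarrow$(v) are essentially formal, the only substantive point being the well-definedness of $A$, which is exactly what the positivity inequality (iv) delivers.
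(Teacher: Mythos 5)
The paper does not prove Theorem \ref{thm:1.1}; it is quoted as a known result with references to \cite{S1,S2,S3,SV,AST}, so there is no internal proof to compare against. Your reconstruction is correct and follows the standard route: using the semi-spectral measure $F_T$ as a hub, with the Herglotz/Poisson representation handling (i) and (ii), the moment identity $\langle T^{[k]}h,g\rangle=\int_{\T}\zeta^{k}\,d\langle F_T(\zeta)h,g\rangle$ together with Fej\'er--Riesz handling (iii), and the intertwiner $A$ built from (iv) by minimality of $V'$. All the individual steps check out: the passage from a non-negative trigonometric polynomial $q$ to an analytic polynomial $p$ with $\Re p|_{\T}=q$ and $\Re p\ge 0$ on $\D$ (minimum principle), the identity $\Re p_\lambda(T)=K(T,\lambda)$, the well-definedness of $A$ from the positivity inequality, and the constant bookkeeping ($c^2$ versus $c$). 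The only places where you lean on facts you do not spell out are the recovery of a positive operator measure as the weak-$*$ limit of its Poisson integrals (for (ii)$\Rightarrow$(vi)) and the positivity of $\int_{\T}\langle dG(\zeta)f(\zeta),f(\zeta)\rangle$ for a positive operator-valued measure $G$ and a vector-valued trigonometric polynomial $f$ (for (vi)$\Rightarrow$(iii)); both are standard and provable by approximation with simple functions, so they are acceptable as cited background rather than gaps.
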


The next lemma gives simple properties of Harnack domination that we will use in the sequel.

\begin{lemma}\label{le:simple properties of Harnack domination}
	Suppose $T\Prec T'$. Then:
	\begin{itemize}
		\item[(i)] $T^n$ is Harnack	dominated by $T^{'n}$, for any integer $n\ge 2$.
		\item[(ii)] $T_1\Prec T_1'$ and $T_2\Prec T_2'$ if and only if $T_1\oplus T_2\Prec T_1'\oplus T_2'$.
		\item[(iii)] If $\h'\subset\h$ is a closed subspace invariant both to $T$ and $T'$, then $T|\h'\Prec T'|\h'$.
                \item[(iv)] The adjoint $T^*$ of $T$ is Harnack
		dominated by the adjoint of $T^{'}$.
	\end{itemize}
\end{lemma}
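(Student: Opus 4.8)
My plan is to work almost entirely from the equivalent characterizations in Theorem~\ref{thm:1.1}, choosing for each item whichever reformulation makes the property most transparent. Throughout, fix $c\ge 1$ with $T\Prec T'$ with constant $c^2$, i.e.\ $K(T,\lambda)\le c^2 K(T',\lambda)$ for all $\lambda\in\D$ (item (ii) of Theorem~\ref{thm:1.1}), or equivalently the dilation-intertwining operator $A$ of item (v).

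For (iv) I would use characterization (iii): the inequality $\sum_{i,j}\langle T^{[i-j]}h_i,h_j\rangle\le c^2\sum_{i,j}\langle T^{'[i-j]}h_i,h_j\rangle$ is symmetric under replacing the index set $\{h_k\}$ by its reversal and noting that $(T^*)^{[k]}=T^{[-k]}=(T^{[k]})^*$; relabelling $i\leftrightarrow j$ turns the $T$-sum into the corresponding sum for $T^*$, so $T^*\Prec T'^*$ with the same constant. (Alternatively, $K(T^*,\lambda)=K(T,\bar\lambda)^*$ conjugated appropriately, but the sesquilinear form is cleanest.) For (iii), restriction to a common invariant subspace $\h'$: the quickest route is again (iii) of Theorem~\ref{thm:1.1}, since if $\h'$ is invariant for both $T$ and $T'$ then for vectors $h_k\in\h'$ one has $(T|\h')^{[k]}h = T^{[k]}h$ only for $k\ge 0$; for $k<0$ this fails, so instead I would use characterization (vi), the semi-spectral measures, together with the fact that the semi-spectral measure of a restriction to an invariant subspace is the compression $P_{\h'}F_T(\cdot)|\h'$ — compressing the inequality $F_T\le c^2F_{T'}$ to $\h'$ gives the claim. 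For (ii), the direct implication uses (iii) of Lemma~\ref{le:simple properties of Harnack domination} applied to the reducing subspaces $\h_1$ and $\h_2$ inside $\h_1\oplus\h_2$; the converse uses characterization (ii) of Theorem~\ref{thm:1.1} and the fact that $K(T_1\oplus T_2,\lambda)=K(T_1,\lambda)\oplus K(T_2,\lambda)$, so a block-diagonal operator inequality is equivalent to the pair of diagonal-block inequalities.

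For (i), that $T^n\Prec T'^n$ for $n\ge 2$: here characterization (v) is the natural tool. If $A\in\mathcal B(\ka',\ka)$ satisfies $A|\h=I$, $AV'=VA$, $\|A\|\le c$, then iterating gives $AV'^n=V^nA$, and $V^n$ is an isometric dilation of $T^n$ (though generally not the \emph{minimal} one). I would then need to pass from this non-minimal dilation back to the minimal isometric dilation of $T^n$, compressing $A$ appropriately to the minimal dilation space, or — more cleanly — use characterization (iv): the inequality $\sum_{i,j}\langle V^{ni}h_i,V^{nj}h_j\rangle\le c^2\sum_{i,j}\langle V'^{ni}h_i,V'^{nj}h_j\rangle$ is just the inequality (iv) for $T,T'$ restricted to the sub-sequence of powers that are multiples of $n$ (formally, take $h_k=0$ unless $k$ is a multiple of $n$), and the left and right sides are exactly the defining sums for $T^n$ and $T'^n$ via (iv) again.

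The only genuine obstacle is the dilation bookkeeping in (i): one must be careful that "the $n$-th power of the minimal isometric dilation" is an isometric dilation of $T^n$ but need not be minimal, and that the equivalences in Theorem~\ref{thm:1.1} are stated for minimal dilations. Using characterization (iv) sidesteps this, since (iv) only asserts an inequality between the two sesquilinear forms $\{h_k\}\mapsto\sum\langle V^ih_i,V^jh_j\rangle$ and its primed analogue, and restricting the support of the index family to multiples of $n$ turns these into the corresponding forms for $V^n,V'^n$, which compute the Harnack inequality for $T^n,T'^n$. All other items reduce to routine manipulations of the list in Theorem~\ref{thm:1.1}.
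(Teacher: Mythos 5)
Your proposal is correct, and it draws on the same toolbox as the paper: every item is reduced to one of the equivalent characterizations in Theorem~\ref{thm:1.1} (the paper's own proof declares (i) and (ii) immediate, proves (iii) directly from the defining inequality~\eqref{eq:definition of Harnack}, and derives (iv) from Theorem~\ref{thm:1.1}~(ii) or (iii)). One remark on part (iii): your stated reason for abandoning characterization (iii) of Theorem~\ref{thm:1.1} — that $(T|\h')^{[k]}h \ne T^{[k]}h$ for $k<0$ — is not actually an obstruction. The operators differ, but the relevant \emph{inner products} do not: for $h,g\in\h'$ one has $\langle T^{*m}h,g\rangle=\langle h,T^mg\rangle=\langle h,(T|\h')^mg\rangle=\langle (T|\h')^{*m}h,g\rangle$ because $\h'$ is $T$-invariant, so the sums in~\eqref{ec13} restricted to families in $\h'$ are exactly the corresponding sums for the restrictions. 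This is precisely the paper's (shorter) argument, phrased with polynomials; your detour through the semi-spectral measures of characterization (vi) is valid but its key step (that $F_{T|\h'}$ is the compression of $F_T$ to $\h'$) rests on the very same observation. Your handling of (i) is careful and correct: restricting the index family in~\eqref{ec13} or~\eqref{ec12} to multiples of $n$ does sidestep the issue that $V^n$ need not be the minimal isometric dilation of $T^n$, since the identity $\langle V^ih_i,V^jh_j\rangle=\langle T^{[i-j]}h_i,h_j\rangle$ holds for any isometric dilation.
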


\begin{proof}
	The assertions in (i) and (ii) are immediate. As for (iii), note that~\eqref{eq:definition of Harnack}
means that for any $h\in\h$ and polynomial $p$ such that $\Re p \ge 0$ on $\D$ we have
	\[
	\Re \langle p(T)h, h\rangle \le c \Re  \langle p(T')h, h\rangle.
	\]
	The left hand side of the inequality depends on $\langle T^n h, h\rangle $
and $\langle T^*{}^n h, h\rangle = \langle  h, T^n h\rangle  $;  and similarly for
the right hand side. It is then clear that the inequality is satisfied if we take only $h\in \h'$.
The condition (iv) follows easily from
Theorem \ref{thm:1.1}, (ii) (or (iii)).
\end{proof}

Another domination relation, introduced in~\cite{C}, has been used in \cite{AST}. As in the latter,
we say that $T$ is Z-\emph{dominated} by $T'$, and we write $T\PrecZ T'$,
if there exists a bounded operator $\tilde{A}$ from
$\h \vee V'\h$ to $\h \vee V\h$ such that for any $h_0,h_1 \in \h$,
$$ \tilde{A}(h_0+V'h_1) = h_0+Vh_1 .$$
In this case, the operator $\tilde{A}$ is the unique bounded operator from $\h \vee V'\h$
to $\h \vee V\h$ which intertwines $V'$ and $V$
and whose restriction to $\h$ is the identity operator.  We say that $T$ is Z-dominated by $T'$
with constant $c\ge 1$ if $\|\tilde{A}\| \le c$.

\begin{theorem}[cf. Lemma 1 in \cite{AST}]\label{thm:1.2}
With the previous notation, the following  statements are equivalent:
\begin{itemize}
\item[(i)] $T \PrecZ T'$ with constant $c\ge 1$;

\item[(ii)] there is $c' \ge 1$ such that, for any $h\in \h$,
$$ \|D_Th\| \le  c'\|D_{T'}h\| \quad \mbox{ and } \quad \|(T'-T)h\| \le c'\|D_{T'}h\| .$$
\end{itemize}
\end{theorem}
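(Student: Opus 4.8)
The plan is to make the subspaces $\h\vee V\h$ and $\h\vee V'\h$ concrete, thereby reducing the Z-domination to an elementary inequality between vectors of $\h$. First I would record the identity
\begin{equation*}
\|h_0+Vh_1\|^2=\|h_0+Th_1\|^2+\|D_Th_1\|^2\qquad(h_0,h_1\in\h),
\end{equation*}
which is immediate: writing $P_{\h}$ for the orthogonal projection of $\ka$ onto $\h$, one has $(I-P_{\h})Vh_1\perp\h$ and $P_{\h}Vh_1=Th_1$, so $\langle h_0,Vh_1\rangle=\langle h_0,Th_1\rangle$; also $\|Vh_1\|=\|h_1\|$; and $\|h_1\|^2-\|Th_1\|^2=\|D_Th_1\|^2$. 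In particular the inner product on $\h+V\h$ depends only on $T$, this subspace is dense in $\h\vee V\h$, and the same identity holds with $V',T'$ in place of $V,T$. Hence $T\PrecZ T'$ with constant $c$ is equivalent to the statement that the assignment $h_0+V'h_1\mapsto h_0+Vh_1$ is single-valued and of norm at most $c$ on the dense subspace $\h+V'\h$, that is, to
\begin{equation*}
\|h_0+Th_1\|^2+\|D_Th_1\|^2\le c^2\bigl(\|h_0+T'h_1\|^2+\|D_{T'}h_1\|^2\bigr)\qquad(h_0,h_1\in\h).\tag{$\star$}
\end{equation*}
The instance of $(\star)$ with vanishing right-hand side is exactly single-valuedness, so it is automatic; the resulting bounded map then extends continuously to $\h\vee V'\h$, restricts to the identity on $\h$, and so coincides with the operator $\tilde{A}$ of the definition.

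Next I would substitute $h_0\mapsto h_0-T'h_1$, which for each fixed $h_1$ is a bijection of $\h$; this turns $(\star)$ into the equivalent
\begin{equation*}
\|h_0+(T-T')h_1\|^2+\|D_Th_1\|^2\le c^2\bigl(\|h_0\|^2+\|D_{T'}h_1\|^2\bigr)\qquad(h_0,h_1\in\h).\tag{$\star\star$}
\end{equation*}
From here (i)$\Rightarrow$(ii) is immediate: taking $h_0=0$ in $(\star\star)$ gives $\|(T'-T)h_1\|^2+\|D_Th_1\|^2\le c^2\|D_{T'}h_1\|^2$, and as both summands on the left are nonnegative, (ii) follows with $c'=c$. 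Conversely, assuming (ii) with constant $c'$, I would bound $\|h_0+(T-T')h_1\|^2\le 2\|h_0\|^2+2(c')^2\|D_{T'}h_1\|^2$ and $\|D_Th_1\|^2\le (c')^2\|D_{T'}h_1\|^2$ to obtain
\begin{equation*}
\|h_0+(T-T')h_1\|^2+\|D_Th_1\|^2\le 2\|h_0\|^2+3(c')^2\|D_{T'}h_1\|^2 ,
\end{equation*}
which, since $c'\ge1$, establishes $(\star\star)$ --- hence $(\star)$, hence (i) --- with $c=\sqrt3\,c'$.

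I do not expect a genuine obstacle. The step requiring the most care is the equivalence between the existence of the bounded operator $\tilde{A}$ and the inequality $(\star)$: one must note that well-definedness of $\tilde{A}$ is subsumed once $(\star)$ is known, and that it suffices to check the bound on the dense subspace $\h+V'\h$. The only other mild point is the bookkeeping of constants --- the statement merely claims that \emph{some} admissible $c'$ exists, which is exactly what lets the crude estimate above, with its loss of a factor $\sqrt3$, go through; one implication does yield the sharp $c'=c$, the other does not.
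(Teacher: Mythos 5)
Your proof is correct. The paper does not actually prove this theorem but cites it from [AST, Lemma 1]; your argument --- computing $\|h_0+Vh_1\|^2=\|h_0+Th_1\|^2+\|D_Th_1\|^2$ from the structure of the minimal isometric dilation, reducing boundedness and well-definedness of $\tilde A$ to the inequality $(\star)$ on the dense subspace $\h+V'\h$, and then translating $h_0$ --- is exactly the standard route used there, including the loss of the factor $\sqrt3$ in the converse direction.
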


The next corollary follows easily, and shows in particular that isometries are minimal elements for the Z-domination.
\begin{corollary}\label{co:simple z dominations}
\begin{itemize}
	\item[(i)] If $T'$ is an isometry, then $T\PrecZ T'$ if and only if $T=T'$.
	\item[(ii)] If $T$ is an isometry, then $T \PrecZ T'$ if and only if $\|(T'-T)h\| \le c'\|D_{T'}h\|$.
	\item[(iii)] If $T,T'$ are orthogonal projections, then $T \PrecZ T'$ if and only if $T'\le T$.
\end{itemize}
\end{corollary}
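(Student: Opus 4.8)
The plan is to read all three statements off the characterization of Z-domination in Theorem~\ref{thm:1.2}: $T\PrecZ T'$ holds exactly when there is a constant $c'\ge1$ with $\|D_Th\|\le c'\|D_{T'}h\|$ and $\|(T'-T)h\|\le c'\|D_{T'}h\|$ for every $h\in\h$. So each part reduces to a short manipulation of these two inequalities together with the special structure of $T$ or $T'$.

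For (i), if $T'$ is an isometry then $D_{T'}=0$, so both inequalities force $D_Th=0$ and $(T'-T)h=0$ for all $h$; the latter already gives $T=T'$. Conversely, if $T=T'$ then $V=V'$ and $\tilde A=I$ witnesses $T\PrecZ T'$. Statement (ii) is the same observation run in the other direction: if $T$ is an isometry then $D_T=0$, the first inequality of Theorem~\ref{thm:1.2}(ii) is automatically satisfied, and $T\PrecZ T'$ becomes equivalent to the remaining inequality $\|(T'-T)h\|\le c'\|D_{T'}h\|$.

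For (iii), suppose $T$ and $T'$ are orthogonal projections. Then $D_T^2=I-T^*T=I-T$ is again an orthogonal projection, so $\|D_Th\|^2=\langle(I-T)h,h\rangle$ and likewise for $T'$; in particular $\n(D_T)=\n(I-T)=\R(T)$. I would argue the two implications separately. If $T'\le T$, then $I-T\le I-T'$ yields $\|D_Th\|\le\|D_{T'}h\|$; moreover $T'\le T$ forces $TT'=T'$, hence $T-T'=T(I-T')$ and $\|(T-T')h\|\le\|(I-T')h\|=\|D_{T'}h\|$, so Theorem~\ref{thm:1.2}(ii) holds with $c'=1$ and $T\PrecZ T'$. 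Conversely, if $T\PrecZ T'$, the first inequality of Theorem~\ref{thm:1.2}(ii) gives $\n(D_{T'})\subseteq\n(D_T)$, that is $\R(T')\subseteq\R(T)$, which for orthogonal projections is precisely $T'\le T$.

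None of these steps presents a real obstacle; the only points requiring a little care are the elementary facts that for an orthogonal projection $P$ one has $D_P^2=I-P$ and $\n(D_P)=\R(P)$, and that for orthogonal projections $P\le Q\iff\R(P)\subseteq\R(Q)\iff QP=P$. The mildest subtlety is in (iii), where instead of estimating $\|(T'-T)h\|$ head-on one exploits the factorization $T-T'=T(I-T')$, valid once $TT'=T'$.
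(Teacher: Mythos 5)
Your proof is correct and follows exactly the route the paper intends: the paper gives no explicit argument, stating only that the corollary ``follows easily'' from Theorem~\ref{thm:1.2}, and your derivation of all three parts from the two inequalities in Theorem~\ref{thm:1.2}(ii) is precisely that easy argument, carried out in full (including the correct identifications $D_P^2=I-P$ and $\n(D_P)=\R(P)$ for an orthogonal projection and the factorization $T-T'=T(I-T')$ in part (iii)).
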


It is clear from the
characterization of Harnack domination given by Theorem~\ref{thm:1.1}, (v),
that $T\Prec T'$ implies $T\PrecZ T'$ (with the same constant).
The relation between them is completed by the following result.

\begin{theorem}[cf. Theorem 3 in \cite{AST}]\label{thm:1.3}
If $T\Prec T'$ with constant $c\ge 1$,
then $T_{\lambda} \Prec T'_{\lambda}$ with constant $c$,
for each $\lambda \in \D$, and so $T_{\lambda} \PrecZ T'_{\lambda}$ with constant $c$,
for each $\lambda \in \D$. Conversely, if $T_{\lambda} \PrecZ T'_{\lambda}$
with constant $c'\ge 1$, for each $\lambda \in \D$, then $T\Prec T'$ with constant $c = \sqrt{3}c'$.
\end{theorem}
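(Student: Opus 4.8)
The two implications call for different parts of Theorem~\ref{thm:1.1}, so I would argue them separately.

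\emph{The direct implication.} The natural tool is the semi-spectral measure description. Suppose $T\Prec T'$ with constant $c$, so by Theorem~\ref{thm:1.1}(vi) we have $F_T\le c^2F_{T'}$. The point is that a M\"obius transform acts on semi-spectral measures by the change of variable induced by $\varphi_\lambda(z)=(z-\lambda)(1-\bar\lambda z)^{-1}$, a homeomorphism of $\T$: one has $F_{T_\lambda}(\sigma)=F_T(\varphi_\lambda^{-1}(\sigma))$ for every Borel $\sigma\subseteq\T$, which follows from $p(T_\lambda)=(p\circ\varphi_\lambda)(T)$ for trigonometric polynomials $p$ together with a Stone--Weierstrass density argument. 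Applying this to both $T$ and $T'$, the inequality $F_T\le c^2F_{T'}$ turns into $F_{T_\lambda}\le c^2F_{T'_\lambda}$, that is, $T_\lambda\Prec T'_\lambda$ with constant $c$. (Alternatively: $\varphi_\lambda(V)$ is again an isometry, since $I-\varphi_\lambda(V)^*\varphi_\lambda(V)=(1-|\lambda|^2)(I-\lambda V^*)^{-1}(I-V^*V)(I-\bar\lambda V)^{-1}=0$, and it is the minimal isometric dilation of $T_\lambda$, so the intertwiner $A$ of Theorem~\ref{thm:1.1}(v) for $(T,T')$ also intertwines the dilations of $T_\lambda$ and $T'_\lambda$, with the same norm.) Finally $T_\lambda\PrecZ T'_\lambda$ with constant $c$ is immediate, since Harnack domination implies $Z$-domination with the same constant, as remarked before the theorem.

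\emph{The converse.} Here I would work at a fixed $\lambda\in\D$ and bound $K(T,\lambda)$ by a multiple of $K(T',\lambda)$; by Theorem~\ref{thm:1.1} this is what is needed. From $T_\lambda\PrecZ T'_\lambda$ with constant $c'$ and Theorem~\ref{thm:1.2}, for every $h\in\h$,
\[
\|D_{T_\lambda}h\|\le c'\|D_{T'_\lambda}h\|,\qquad \|(T'_\lambda-T_\lambda)h\|\le c'\|D_{T'_\lambda}h\|.
\]
I would then translate these using the M\"obius identities
\[
D_{T_\lambda}^2=(1-|\lambda|^2)(I-\lambda T^*)^{-1}D_T^2(I-\bar\lambda T)^{-1},\qquad T'_\lambda-T_\lambda=(1-|\lambda|^2)(I-\bar\lambda T')^{-1}(T'-T)(I-\bar\lambda T)^{-1},
\]
together with the corresponding identity for $D_{T'_\lambda}$; all three follow by a direct computation from $T_\lambda=(T-\lambda I)(I-\bar\lambda T)^{-1}$. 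For $k\in\h$ put $u=(I-\bar\lambda T)^{-1}k$, $u'=(I-\bar\lambda T')^{-1}k$ and $v=(I-\bar\lambda T')^{-1}(T'-T)u$; then $u'=u+\bar\lambda v$, and the two displayed inequalities become $\|D_Tu\|^2\le c'^2\|D_{T'}u'\|^2$ and $(1-|\lambda|^2)\|v\|^2\le c'^2\|D_{T'}u'\|^2$.

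To conclude, I would use $K(T,\lambda)=(I-\lambda T^*)^{-1}(I-|\lambda|^2T^*T)(I-\bar\lambda T)^{-1}$, which gives $\langle K(T,\lambda)k,k\rangle=(1-|\lambda|^2)\|u\|^2+|\lambda|^2\|D_Tu\|^2$, and likewise $\langle K(T',\lambda)k,k\rangle=(1-|\lambda|^2)\|u'\|^2+|\lambda|^2\|D_{T'}u'\|^2$. Inserting $u=u'-\bar\lambda v$, the elementary bound $\|u'-\bar\lambda v\|^2\le 2\|u'\|^2+2|\lambda|^2\|v\|^2$, then the two inequalities above and $c'\ge1$, one collects terms to obtain $\langle K(T,\lambda)k,k\rangle\le 3c'^2\langle K(T',\lambda)k,k\rangle$ for all $k$ and all $\lambda$. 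Thus $K(T,\lambda)\le 3c'^2K(T',\lambda)$ on $\D$, which by Theorem~\ref{thm:1.1} is exactly $T\Prec T'$ with constant $\sqrt3\,c'$. The only real work lies in establishing the M\"obius identities for $D_{T_\lambda}$ and $T'_\lambda-T_\lambda$ and in arranging this last quadratic-form estimate so that one hypothesis controls the $\|u'\|^2$-part of $\langle K(T,\lambda)k,k\rangle$ and the other the $\|D_{T'}u'\|^2$-part — which is precisely what produces the factor $3$, hence the $\sqrt3$; one should also keep the normalization of the Harnack constant (the "$c$ versus $c^2$" bookkeeping) consistent when quoting the various items of Theorem~\ref{thm:1.1}.
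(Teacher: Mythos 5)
The paper gives no proof of this theorem --- it is quoted from \cite{AST} --- so there is nothing internal to compare you against; judged on its own terms, your argument is correct and essentially self-contained. For the direct implication both of your routes work: the pushforward identity $F_{T_\lambda}(\sigma)=F_T(\varphi_\lambda^{-1}(\sigma))$ is legitimate (it follows from $\langle T_\lambda^n h,k\rangle=\int_\T \varphi_\lambda^n\,d\langle F_T h,k\rangle$, valid because $\varphi_\lambda^n$ lies in the disc algebra), and so is the observation that $\varphi_\lambda(V)$ is the minimal isometric dilation of $T_\lambda$ (minimality uses $\varphi_\lambda^{-1}=\varphi_{-\lambda}$, so that $V$ is recovered as a norm limit of polynomials in $\varphi_\lambda(V)$, and the intertwiner $A$ passes to these limits). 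For the converse, your identities for $D_{T_\lambda}^2$ and for $T'_\lambda-T_\lambda$ are correct, the relation $u'=u+\bar\lambda v$ checks out, and the chain $(1-|\lambda|^2)\|u\|^2+|\lambda|^2\|D_Tu\|^2\le 2(1-|\lambda|^2)\|u'\|^2+3|\lambda|^2c'^2\|D_{T'}u'\|^2\le 3c'^2\langle K(T',\lambda)k,k\rangle$ does produce the stated $\sqrt3$. The one point worth making explicit is quantitative: Theorem~\ref{thm:1.2} as stated only asserts the existence of \emph{some} $c'$, whereas you need (and tacitly use) the sharp form of Lemma~1 of \cite{AST}, namely that $\|\tilde A\|\le c'$ yields both inequalities of Theorem~\ref{thm:1.2}(ii) with the same constant $c'$ (take $h_0=-T'h_1$ in $\|h_0+Th_1\|^2+\|D_Th_1\|^2\le c'^2\bigl(\|h_0+T'h_1\|^2+\|D_{T'}h_1\|^2\bigr)$). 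This is also the normalization the paper itself relies on in the proof of Theorem~\ref{thm:res}, so your closing caveat about the ``$c$ versus $c^2$'' bookkeeping is apt but harmless.
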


In the case of positive contractions, there is a closer relation between our two domination relations.
The next result is a consequence of~\cite{KSS} (more precisely, it
follows from Corollary 2.13, Lemma 2.17, and Corollary 3.3 therein).

\begin{lemma}\label{le:KSS}
	Suppose $A,A'\ge 0$ are contractions. Then:
	\begin{itemize}
		\item[(i)] $A\PrecZ A'$ if and only if $I-A^2\le c(I-A'{}^2)$ for some constant $c$.
		\item[(ii)] $A, A'$ are {\rm Z}-equivalent if and only if they are Harnack equivalent.
	\end{itemize}
\end{lemma}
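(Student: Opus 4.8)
For part~(i) I would read everything off Theorem~\ref{thm:1.2}, using that $A=A^*\ge 0$, so that $\|D_Ah\|^2=\langle (I-A^2)h,h\rangle$ and likewise for $A'$. If $A\PrecZ A'$, then Theorem~\ref{thm:1.2}(ii) supplies a constant $c'$ with $\|D_Ah\|\le c'\|D_{A'}h\|$, and this is precisely $I-A^2\le c'^2(I-A'^2)$; no further argument is needed. Conversely, assuming $I-A^2\le c(I-A'^2)$, the first inequality in Theorem~\ref{thm:1.2}(ii) is immediate (with constant $\sqrt c$), and for the second I would write $A'-A=(I-A)-(I-A')$ and use that for any positive contraction $B$ one has $0\le I-B\le I$, whence $\|(I-B)h\|\le\|(I-B)^{1/2}h\|$, and $B-B^2\ge 0$, whence $I-B\le I-B^2$. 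Applied to $B=A$ and $B=A'$ these yield $\|(I-A)h\|^2\le\langle(I-A^2)h,h\rangle\le c\,\|D_{A'}h\|^2$ and $\|(I-A')h\|^2\le\langle(I-A'^2)h,h\rangle=\|D_{A'}h\|^2$, so $\|(A'-A)h\|\le(\sqrt c+1)\|D_{A'}h\|$, and Theorem~\ref{thm:1.2} then gives $A\PrecZ A'$. (This is essentially Corollary~2.13 and Lemma~2.17 of~\cite{KSS}.)

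For part~(ii) one implication is free: if $A,A'$ are Harnack equivalent, then $A\Prec A'$ and $A'\Prec A$, and since $\Prec$ implies $\PrecZ$ with the same constant (Theorem~\ref{thm:1.1}(v)), they are Z-equivalent. For the converse I would use part~(i) to rephrase the hypothesis: Z-equivalence of $A$ and $A'$ means $I-A^2\le c(I-A'^2)$ and $I-A'^2\le c(I-A^2)$ for some $c$. In particular $\n(I-A^2)=\n(I-A'^2)=:\el$, and since $A\ge 0$ this is the fixed space $\n(I-A)=\n(I-A')$, which reduces both $A$ and $A'$ and on which each acts as the identity. By Lemma~\ref{le:simple properties of Harnack domination}(ii) it then suffices to prove Harnack equivalence of $A|\el^\perp$ and $A'|\el^\perp$, so I may assume $\n(I-A)=\n(I-A')=\{0\}$, with $I-A^2$ and $I-A'^2$ still dominating each other.

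It then remains to show that, in this reduced setting, the mutual domination of $I-A^2$ and $I-A'^2$ forces $A\Prec A'$; the relation being symmetric in $A,A'$, this would also give $A'\Prec A$, hence Harnack equivalence. This last step is the heart of the statement and the step I expect to be the real obstacle. I would attack it via the Poisson-kernel criterion Theorem~\ref{thm:1.1}(ii), factoring $K(A,\lambda)=M_\lambda^*M_\lambda$ with $M_\lambda=(I-|\lambda|^2A^2)^{1/2}(I-\bar\lambda A)^{-1}$ (and similarly for $A'$) and seeking a transition operator $C_\lambda$ with $M_\lambda=C_\lambda M_\lambda'$ and $\|C_\lambda\|$ bounded uniformly in $\lambda\in\D$; an alternative is to use Theorem~\ref{thm:1.3}, checking the two inequalities of Theorem~\ref{thm:1.2} for the pair $A_\lambda,A'_\lambda$ uniformly in $\lambda$ by means of the identity $I-A_\lambda^*A_\lambda=(1-|\lambda|^2)(I-\lambda A)^{-1}(I-A^2)(I-\bar\lambda A)^{-1}$. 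In either route the delicate point is to keep the constants bounded as $|\lambda|\to 1$, and it is exactly here that both inequalities are needed: a single one, $I-A^2\le c(I-A'^2)$, only produces $A\PrecZ A'$, which for positive contractions is strictly weaker than $A\Prec A'$ (take $A=I$, $A'=\tfrac12 I$). Rather than carrying out these uniform estimates by hand I would invoke the structural analysis of~\cite{KSS}, specifically Corollary~3.3 (together with Corollary~2.13 and Lemma~2.17 therein), from which the required equivalence can be read off directly.
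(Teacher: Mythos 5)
Your proposal is correct, and it actually supplies more argument than the paper does: the paper gives no proof of this lemma at all, simply recording it as a consequence of Corollary 2.13, Lemma 2.17 and Corollary 3.3 of \cite{KSS}. Your part (i) is a complete, self-contained and correct derivation from Theorem~\ref{thm:1.2}: the forward direction is indeed just the first inequality of Theorem~\ref{thm:1.2}(ii) read off for self-adjoint operators, and your converse correctly exploits the two elementary facts $(I-B)^2\le I-B$ and $I-B\le I-B^2$ for a positive contraction $B$ to control $\|(A'-A)h\|$ by $\|D_{A'}h\|$. In part (ii) the easy implication (Harnack equivalence implies Z-equivalence) and the reduction to the case $\n(I-A)=\n(I-A')=\{0\}$ are both sound, and your counterexample $A=I$, $A'=\tfrac12 I$ correctly shows that a one-sided Z-domination of positive contractions does not give Harnack domination (since $I$ is a singular unitary, hence maximal by Corollary~\ref{cor:alpha-max}). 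For the remaining hard implication -- that mutual domination of $I-A^2$ and $I-A'^2$ forces Harnack equivalence -- you end up citing exactly the same results of \cite{KSS} that the paper cites, so on that core step you and the paper are in the same position; the uniform-in-$\lambda$ estimates you sketch via Theorem~\ref{thm:1.3} are a reasonable plan of attack but are not carried out, and you are right to flag that keeping the constants bounded as $|\lambda|\to1$ is where the difficulty lies. In short: no gap relative to what the paper itself establishes, and a genuine bonus in that part (i) is now actually proved.
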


We end this section with a result that shows that Harnack domination implies a useful resolvent estimate. In
the case of isometries this necessary condition is also sufficient.

\begin{theorem}\label{thm:res}
Let $T,T'$ be contractions in $\B$.
Suppose that $T$ is Harnack dominated by $T'$. Then there is $c>0$ such that for each $h \in \h$ we have
\begin{equation}\label{eq:res estim}
\|(I-\lambda T)^{-1}(T-T')h\|^2 \le \frac{c}{1-|\lambda|^2}\|D_{T'}h\|^2 \quad (\lambda \in \D).
\end{equation}
If $T$ is an isometry, the converse is also true: if \eqref{eq:res estim} is satisfied for all $\lambda \in \D$ and $h \in \h$, then
 $T\Prec T'$.
\end{theorem}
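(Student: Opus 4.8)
The plan is to derive the resolvent estimate \eqref{eq:res estim} from the Poisson-kernel form of Harnack domination (Theorem~\ref{thm:1.1}(ii)), and then, in the isometric case, to run the argument backwards using Theorem~\ref{thm:res}'s characterization via Z-domination of the M\"obius transforms. First I would start from $K(T,\lambda)\le c^2 K(T',\lambda)$. The key computational identity to uncover is that $K(T,\lambda)$ controls $\|(I-\lambda T^*)^{-1}h\|^2$: indeed from
\[
K(T,\lambda) = (I-\lambda T^*)^{-1}(I-|\lambda|^2 T^*T)(I-\overline{\lambda}T)^{-1}
\]
one gets, writing $g=(I-\overline\lambda T)^{-1}h$, the estimate $\langle K(T,\lambda)h,h\rangle = \|g\|^2 - |\lambda|^2\|Tg\|^2 \ge (1-|\lambda|^2)\|g\|^2$ as well as $\langle K(T,\lambda)h,h\rangle \ge \|D_T g\|^2$. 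So domination of the kernels yields both a bound on $\|(I-\overline\lambda T)^{-1}h\|$ and on $\|D_T(I-\overline\lambda T)^{-1}h\|$ in terms of $\langle K(T',\lambda)h,h\rangle$, and symmetrically (replacing $\lambda$ by $\overline\lambda$ and taking adjoints) in terms of the kernel of $T'$.

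The next step is to produce the factor $(T-T')$. Here I would use the resolvent-type identity
\[
(I-\lambda T)^{-1} - (I-\lambda T')^{-1} = \lambda (I-\lambda T)^{-1}(T-T')(I-\lambda T')^{-1},
\]
so that $(I-\lambda T)^{-1}(T-T')h = \tfrac1\lambda\big[(I-\lambda T)^{-1}-(I-\lambda T')^{-1}\big](I-\lambda T')h$ for $\lambda\ne 0$ (the case $\lambda=0$ being trivial). Applying the two kernel bounds from the first step to each of the two resolvent terms, evaluated at the vector $(I-\lambda T')h$, and using $\langle K(T,\lambda)(I-\lambda T')h,(I-\lambda T')h\rangle \le c^2\langle K(T',\lambda)(I-\lambda T')h,(I-\lambda T')h\rangle$, I expect everything to collapse, after the identity $\langle K(T',\lambda)(I-\lambda T')h,(I-\lambda T')h\rangle = $ (something proportional to) $\|D_{T'}h\|^2$ — more precisely $\langle K(T',\lambda)k,k\rangle$ with $k=(I-\lambda T')h$ equals $\|(I-\lambda T')^{-1}k\|^2 - |\lambda|^2\|T'(I-\lambda T')^{-1}k\|^2$, and with this $k$ the resolvent disappears. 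Tracking the $\tfrac1\lambda$ and the $(1-|\lambda|^2)$ denominators carefully should give \eqref{eq:res estim}, possibly with a worse constant for small $|\lambda|$, which one patches by a compactness/continuity argument on $\{|\lambda|\le 1/2\}$ since the left side of \eqref{eq:res estim} is bounded there anyway.

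For the converse, assume $T$ is an isometry and \eqref{eq:res estim} holds. By Theorem~\ref{thm:1.3} it suffices to show $T_\lambda \PrecZ T'_\lambda$ with a uniform constant for every $\lambda\in\D$, and by Theorem~\ref{thm:1.2}(ii) (together with Corollary~\ref{co:simple z dominations}(ii), since $T_\lambda$ is again an isometry, $D_{T_\lambda}=0$) this reduces to the single inequality $\|(T'_\lambda - T_\lambda)g\|\le c'\|D_{T'_\lambda}g\|$ for all $g$. Now I would use the standard formulas relating M\"obius transforms: $D_{T'_\lambda}^2 = (1-|\lambda|^2)(I-\overline\lambda T')^{-1}D_{T'}^2(I-\lambda T'^*)^{-1}$, so $\|D_{T'_\lambda}g\|^2 = (1-|\lambda|^2)\|D_{T'}(I-\lambda T'^*)^{-1}g\|^2$, and a parallel computation expresses $(T'_\lambda - T_\lambda)g$ as $(I-\overline\lambda T')^{-1}(\text{const})(T'-T)(\text{stuff})(I-\lambda T'^*)^{-1}g$ up to scalars of modulus related to $1-|\lambda|^2$. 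Substituting $h = (I-\lambda T'^*)^{-1}g$ (or its analogue) turns the required inequality into exactly \eqref{eq:res estim} at the appropriate point, with the $(1-|\lambda|^2)$ factors matching up. The main obstacle I anticipate is bookkeeping: getting the M\"obius-transform defect and difference identities exactly right, and making sure the constants are uniform in $\lambda$ (so that the hypothesis of Theorem~\ref{thm:1.3} is genuinely met) rather than merely finite for each fixed $\lambda$ — the isometry hypothesis on $T$ is what makes $T_\lambda$ an isometry and thus lets the single difference-bound suffice, so that point must be used essentially.
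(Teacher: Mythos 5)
Your converse direction is essentially the paper's argument: reverse the computation, observe that $T_\lambda$ is again an isometry so that only the single inequality $\|(T'_\lambda-T_\lambda)g\|\le c'\|D_{T'_\lambda}g\|$ is needed for $T_\lambda\PrecZ T'_\lambda$, check uniformity of the constant in $\lambda$, and invoke Theorem~\ref{thm:1.3}. (Minor slip there: the defect identity is $\|D_{T'_\lambda}g\|^2=(1-|\lambda|^2)\|D_{T'}(I-\overline{\lambda} T')^{-1}g\|^2$, with the resolvent of $T'$, not of $T'^*$.)

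The forward direction, however, has a genuine gap. With $k=(I-\overline{\lambda} T')h$ one has
\[
\langle K(T',\lambda)k,k\rangle=\|h\|^2-|\lambda|^2\|T'h\|^2=\|D_{T'}h\|^2+(1-|\lambda|^2)\|T'h\|^2,
\]
which is \emph{not} proportional to $\|D_{T'}h\|^2$: the term $(1-|\lambda|^2)\|T'h\|^2$ survives. Consequently, bounding the two resolvents $(I-\overline{\lambda} T)^{-1}k$ and $(I-\overline{\lambda} T')^{-1}k=h$ separately and adding yields at best
\[
\|(I-\overline{\lambda} T)^{-1}(T-T')h\|\lesssim \frac{1}{|\lambda|}\Bigl(\frac{c\,\|D_{T'}h\|}{\sqrt{1-|\lambda|^2}}+c\,\|T'h\|+\|h\|\Bigr),
\]
with parasitic terms of order $\|h\|$ that are not dominated by $\|D_{T'}h\|$; note that for $h$ with $D_{T'}h=0$ the claimed estimate \eqref{eq:res estim} forces $Th=T'h$, while your bound gives nothing. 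The triangle inequality destroys exactly the cancellation that \eqref{eq:res estim} asserts: each resolvent term separately has norm of order $\|h\|$ even when $T=T'$. For the same reason the proposed patch on $\{|\lambda|\le 1/2\}$ fails, since boundedness of the left-hand side there does not yield a bound by a multiple of $\|D_{T'}h\|^2$. What is actually needed is the inequality $\|(T_\lambda-T'_\lambda)x\|^2\le c\,\|D_{T'_\lambda}x\|^2$, i.e.\ the Z-domination $T_\lambda\PrecZ T'_\lambda$ furnished by Theorems~\ref{thm:1.3} and~\ref{thm:1.2} (ultimately by the intertwining operator of Theorem~\ref{thm:1.1}~(v)); this controls the \emph{difference} of resolvents directly through the identity \eqref{eq:T'lambda-Tlambda} and is the route the paper takes. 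The pointwise Poisson-kernel inequality, used term by term, does not substitute for it.
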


\begin{proof}
(i) Suppose that $T$ is Harnack dominated by $T'$ with constant $c$. Thus $T_\lambda \PrecZ T'_{\lambda}$
with constant $c$ for every $\lambda \in \D$. Then
\begin{equation}\label{eq:myeq1}
 \|(T'_\lambda -T_\lambda)x\|^2 \le c
 \|D_{T'_\lambda}x\|^2
\end{equation}
for each $x \in \h$.

Denoting $h = (I-\overline{\lambda} T')^{-1}x$, we obtain $T'_\lambda x = (T'-\lambda I)h$
and $ \|D_{T'_\lambda}x\|^2= (1-|\lambda|^2)\|D_{T'}h\|^2$.
Since
\begin{equation}\label{eq:T'lambda-Tlambda}
T_\lambda - T'_\lambda = \frac{1-|\lambda|^2}{\overline{\lambda}} \left[ (I-\overline{\lambda} T)^{-1}
- (I-\overline{\lambda} T')^{-1} \right]
\end{equation}
we get from~\eqref{eq:myeq1}
$$  \left\|\left[ (I-\overline{\lambda} T)^{-1} - (I-\overline{\lambda} T')^{-1} \right]x\right\|^2
\le \frac{c|\lambda|^2}{1-|\lambda|^2}\|D_{T'_\lambda}h\|^2 .$$
But
$$ \left[ (I-\overline{\lambda} T)^{-1} - (I-\overline{\lambda} T')^{-1} \right]x
= (I-\overline{\lambda} T)^{-1}\left[ \overline{\lambda}(T-T')\right]h ,$$
and therefore~\eqref{eq:res estim} is true.

Suppose now that $T$ is an isometry and that
~\eqref{eq:res estim} is satisfied
for every $\lambda\in\D$. The above proof  can be reversed to get
$$ \|(T'_\lambda -T_\lambda)x\|^2 \le
c\|D_{T'_\lambda}x\|^2 $$
for each $x \in \h$. Since $T$ is an isometry, the same is true for each M\"obius
transform $T_\lambda$. Therefore $T_\lambda \PrecZ T'_{\lambda}$ uniformly in  $\lambda \in \D$, and thus $T\Prec T'$.
\end{proof}

\begin{remark}
With similar methods it can be proved that
the contraction $T$ is Harnack dominated by $T'$ with constant $c$
if and only if, for each $h \in \h$, and each $\lambda \in\D$ one has
\begin{eqnarray*}
\|(I-\lambda T)^{-1}(T-T')h\|^2  &+& \frac{1}{1-|\lambda|^2}\left(1-\frac{1}{c^2}\right)\left(\|z\|^2-\|Tz\|^2\right) \\
 &\le& \frac{c^2-1}{1-|\lambda|^2}\left(\|h\|^2-\|T'h\|^2\right),
\end{eqnarray*}
where $z = z(\lambda,h) = (I-\lambda T)^{-1}(I-\lambda T')h$. We will not use this more general result in the sequel.
\end{remark}

\begin{corollary}\label{cor:min}
	A contraction is a minimal element for Harnack domination if and only if it is an isometry or a coisometry.
\end{corollary}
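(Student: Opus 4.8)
The plan is to prove the nontrivial implication: every isometry and every coisometry is a minimal element for the Harnack domination. The reverse implication has in effect already been recorded in Section~\ref{Section:Notation} --- a minimal element is Harnack equivalent only to itself, hence is an isometry or a coisometry by the result of~\cite{KSS} --- so only the converse needs an argument.

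First I would treat the case when $T$ is an isometry. Suppose $T'\Prec T$. I would apply Theorem~\ref{thm:res} to the domination $T'\Prec T$, with the roles of the two contractions interchanged: this produces a constant $c>0$ with
\[
\|(I-\lambda T')^{-1}(T'-T)h\|^2 \le \frac{c}{1-|\lambda|^2}\|D_{T}h\|^2 \qquad (h\in\h,\ \lambda\in\D).
\]
The key point is that since $T$ is an isometry, $D_T=(I-T^{\ast}T)^{1/2}=0$, so the right-hand side vanishes identically. Taking $\lambda=0$ (equivalently, any $\lambda\in\D$ together with the bounded invertibility of $I-\lambda T'$) gives $(T'-T)h=0$ for every $h\in\h$, i.e. $T'=T$. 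Hence an isometry is minimal.

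Next I would reduce the coisometry case to the previous one by passing to adjoints. If $T$ is a coisometry, then $T^{\ast}$ is an isometry; and if $T'\Prec T$, then Lemma~\ref{le:simple properties of Harnack domination}(iv) gives $T'^{\ast}\Prec T^{\ast}$, so by the isometry case $T'^{\ast}=T^{\ast}$, whence $T'=T$. Thus a coisometry is also minimal, which completes the proof.

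I do not expect a genuine obstacle here: once Theorem~\ref{thm:res} is available, the argument is essentially the observation that the defect operator of an isometry is zero, which collapses the resolvent estimate. (One could instead bypass Theorem~\ref{thm:res} and argue directly from Corollary~\ref{co:simple z dominations}(i), since $T'\Prec T$ implies $T'\PrecZ T$; but invoking the resolvent estimate keeps the section self-contained and matches the remark that for isometries this necessary condition is also sufficient.)
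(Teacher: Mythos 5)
Your proposal is correct and follows essentially the same route as the paper: apply the resolvent estimate of Theorem~\ref{thm:res} to the domination whose dominating operator is an isometry, observe that its defect operator vanishes so the estimate forces the two contractions to coincide, and then handle coisometries by passing to adjoints via Lemma~\ref{le:simple properties of Harnack domination}(iv). The only difference is notational (you name the isometry $T$ where the paper names it $T'$), plus your valid side remark that Corollary~\ref{co:simple z dominations}(i) gives a shortcut.
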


\begin{proof}
We have already noticed above that a minimal element has to be an isometry or a coisometry.
Suppose then that $T'$ is an isometry and that $T$ is Harnack dominated by $T'$.
Then the inequality (\ref{eq:res estim}) implies $(I-\lambda T)^{-1}(T-T')h = 0 $ for
each $h \in \h$, and so $T=T'$. Thus $T'$ is minimal.

Using Lemma \ref{le:simple properties of Harnack domination}, (iv), we obtain that
$T^{'\ast}$ is minimal whenever $T'$ is minimal.
Thus coisometries are also  minimal elements for Harnack domination.
\end{proof}

\section{Maximal elements for Harnack domination} \label{Sect:exemplu}

 In this section we prove that singular unitary operators are precisely the
maximal elements with respect to Harnack domination.

Given a finite measure $\mu$ on $\T$ we denote by $D_{\mu}(x)$ its upper density
$$ D_{\mu}(x) = \limsup_{\epsilon \to 0}\frac{\mu{(x-\epsilon,x+\epsilon)}}{2\epsilon}$$
at $x$. It is known that if $\mu$ is singular with respect to Lebesgue measure, then
$D_{\mu}(x) = \infty$ $\mu$ a.e.

\begin{theorem}\label{thm:singular}
	Let $U \in \B$ be a unitary operator with spectral measure $E_U$ and
	let $T\in \B$ be a contraction. Let $h\in \h$.
	Suppose that $U$ is Harnack dominated by
	$T$  and that $y = (U-T)h\not=0$.
	If $\mu_y=\langle E_U y, y\rangle $, then for any $t \in \T$ we have
	$D_{\mu_y}(t) < +\infty$.
\end{theorem}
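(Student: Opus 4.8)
The plan is to convert the Harnack domination $U\Prec T$ into the resolvent estimate of Theorem~\ref{thm:res}, then read that estimate through the spectral theorem for the unitary $U$ as a uniform bound on the Poisson integral of the measure $\mu_y$; the classical relation between a Poisson integral and the upper symmetric density of a measure then yields $D_{\mu_y}(t)<\infty$ for every $t$.

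Concretely, I would first invoke Theorem~\ref{thm:res} with the Harnack-dominated operator taken to be $U$ and the dominating one $T$: since $U\Prec T$, this produces a constant $c>0$ with $\|(I-\lambda U)^{-1}(U-T)h\|^2\le\frac{c}{1-|\lambda|^2}\|D_Th\|^2$ for all $\lambda\in\D$. Writing $y=(U-T)h$ and using the spectral representation $(I-\lambda U)^{-1}=\int_\T(1-\lambda\zeta)^{-1}\,dE_U(\zeta)$, the left-hand side equals $\int_\T|1-\lambda\zeta|^{-2}\,d\mu_y(\zeta)$ with $\mu_y=\langle E_U y,y\rangle$. Multiplying through by $1-|\lambda|^2$ and specializing $\lambda=\rho e^{-it}$ with $\rho\in[0,1)$ turns the left-hand side into a Poisson integral: for $\zeta=e^{i\theta}$ one has $\frac{1-\rho^2}{|1-\rho e^{-it}\zeta|^2}=\frac{1-\rho^2}{1-2\rho\cos(\theta-t)+\rho^2}$, which is exactly the Poisson kernel evaluated at $\theta-t$ with radius $\rho$. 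Hence $\int_\T\frac{1-\rho^2}{1-2\rho\cos(\theta-t)+\rho^2}\,d\mu_y(e^{i\theta})\le c\|D_Th\|^2$ for every $\rho\in[0,1)$ and every $t$.

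The last step is the standard comparison of this bounded Poisson integral with the upper density: taking $\rho=1-\epsilon$ and keeping only the part of the integral over the arc $|\theta-t|<\epsilon$, one has there $\frac{1-\rho^2}{1-2\rho\cos(\theta-t)+\rho^2}\ge\frac{1}{2\epsilon}$ by an elementary bound on numerator and denominator (the denominator is $(1-\rho)^2+4\rho\sin^2\frac{\theta-t}{2}<2\epsilon^2$, the numerator is $\ge\epsilon$), so $\frac{\mu_y(t-\epsilon,t+\epsilon)}{2\epsilon}\le c\|D_Th\|^2$; letting $\epsilon\to0$ gives $D_{\mu_y}(t)\le c\|D_Th\|^2<\infty$. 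I do not anticipate a genuine obstacle: the only points requiring a little care are the spectral-theoretic rewriting of $\|(I-\lambda U)^{-1}y\|^2$ and the short-arc lower bound for the Poisson kernel, both routine. Note that the hypothesis $y\ne0$ is not needed for this statement; it is used only when the theorem is applied to a \emph{singular} unitary $U$, where $\mu_y$ is then a singular measure and so $D_{\mu_y}=\infty$ $\mu_y$-almost everywhere, forcing $y=0$ and hence the maximality of $U$.
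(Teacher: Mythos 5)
Your proposal is correct and follows essentially the same route as the paper: apply the resolvent estimate of Theorem~\ref{thm:res}, rewrite the left-hand side via the spectral theorem as $\int_\T |1-\lambda e^{i\theta}|^{-2}\,d\mu_y(\theta)$, and take $\lambda=(1-\epsilon)e^{-it}$ with the short-arc lower bound for the kernel to get $\mu_y(t-\epsilon,t+\epsilon)\le 2c\epsilon\|D_Th\|^2$. Your closing observation that $y\neq 0$ is not actually needed for the estimate itself (only for the application to singular unitaries) is also accurate.
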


\begin{proof} If $U\Prec T$ with constant $c$, then the resolvent estimate~\eqref{eq:res estim} is satisfied with the same constant $c$ and thus
\[
\|(I-\lambda U)^{-1}(U-T)h\|^2 \le \frac{c}{1-|\lambda|^2} (\|h\|^2-\|Th\|^2).
\]
	By the spectral theorem, we have
	\[
	\|(I-\lambda U)^{-1}(U-T)h\|^2 = \int_0^{2\pi} \frac{1}{|1-\lambda
		e^{it}|^2} d\mu_y(t)
	\]
	for every $\lambda \in \D$. Let $\epsilon > 0$ and fix $t_0\in\T$.
	For $\lambda=(1-\epsilon) e^{-it_0}$, we obtain
	\begin{eqnarray*}
		\int_0^{2\pi} \frac{1}{|1-\lambda e^{it}|^2} d\mu_y(t) &\ge &
		\int_{t_0-\epsilon}^{t_0+\epsilon} \frac{1}{|e^{-it}-(1-\epsilon)
			e^{-it_0} |^2} d\mu_y(t) \\
		&\ge & \frac{1}{2\epsilon^2} \mu_y([t_0-\epsilon,
		t_0+\epsilon]).
	\end{eqnarray*}
	Therefore
	\begin{eqnarray*}
		\frac{\mu_y([t_0-\epsilon,
			t_0+\epsilon])}{2\epsilon^2}&\le &\int_0^{2\pi} \frac{1}{|1-\lambda
			e^{it}|^2} d\mu_y(t)
		\\
		&\le & \frac{c}{1-|\lambda|^2} (\|h\|^2-\|Th\|^2)\\
		&\le&  \frac{c}{\epsilon} \|h\|^2.
	\end{eqnarray*}
	We obtain
	\begin{equation*}\label{eq:contr}
	\frac{\mu_y([t_0-\epsilon,
		t_0+\epsilon])}{2\epsilon} \le c\|h\|^2 ,
	\end{equation*}
	which proves the theorem.
\end{proof}

\begin{corollary}\label{cor:alpha-max}
	Any singular unitary operator  is a maximal
	element for Harnack domination.
\end{corollary}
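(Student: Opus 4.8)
The plan is to derive this immediately from Theorem~\ref{thm:singular}, arguing by contradiction. Recall that a \emph{singular} unitary $U$ is one whose spectral measure $E_U$ is singular with respect to the normalized Lebesgue measure $m$ on $\T$; equivalently, there is a Borel set $\sigma_0\subseteq\T$ with $m(\sigma_0)=0$ and $E_U(\sigma_0)=I$. In particular, every scalar measure of the form $\langle E_U y,y\rangle$ is carried by $\sigma_0$ and hence singular with respect to $m$.

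First I would suppose that $U$ is Harnack dominated by some contraction $T\in\B$ and assume, for contradiction, that $T\neq U$. Then there is a vector $h\in\h$ for which $y=(U-T)h\neq 0$, so that the positive finite measure $\mu_y=\langle E_U y,y\rangle$ is nonzero and, by the remark above, singular with respect to $m$. By the classical differentiation fact recalled just before Theorem~\ref{thm:singular}, a nonzero finite measure singular with respect to Lebesgue measure satisfies $D_{\mu_y}(t)=+\infty$ for $\mu_y$-almost every $t$; since $\mu_y\not\equiv 0$, there exists at least one point $t\in\T$ with $D_{\mu_y}(t)=+\infty$.

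On the other hand, since $U$ is unitary, $T$ is a contraction, $U\Prec T$ and $y=(U-T)h\neq 0$, Theorem~\ref{thm:singular} applies and yields $D_{\mu_y}(t)<+\infty$ for \emph{every} $t\in\T$. This contradicts the previous paragraph, so in fact $(U-T)h=0$ for all $h\in\h$, i.e.\ $T=U$. Hence $U$ is a maximal element for Harnack domination.

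There is essentially no obstacle here: the only things to verify are that the scalar spectral measures of a singular unitary are singular (immediate from the definition) and that a nonzero singular measure possesses a point of infinite upper density (the standard differentiation-of-measures fact already quoted in the text). Everything else is a direct application of Theorem~\ref{thm:singular}.
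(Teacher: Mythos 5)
Your argument is correct and is exactly the intended one: the paper states this corollary without a separate proof precisely because it follows from Theorem~\ref{thm:singular} together with the preceding remark that a singular measure has infinite upper density $\mu$-a.e. Your only added verifications (that $\langle E_U y,y\rangle$ is singular for a singular unitary, and that a nonzero singular measure has at least one point of infinite upper density) are the right ones and are handled correctly.
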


Note that the particular case of the maximality of a symmetry (a unitary operator $T$ with $T^2=I$) follows from \cite[Corollary 3.3 and Proposition 3.5]{KSS}.

The next lemma, which we need here as well as in Section~\ref{sect:6}, is a simple computation. We use the notation $\xi\otimes \eta$ for
the rank one operator $x\mapsto \langle x, \eta\rangle \xi$.

\begin{lemma}\label{le:one dim pert of isometries}
	Suppose $U\in\B$ is an isometry, $\xi\in\h$, $\|\xi\|=1$, and $\alpha\in\C$. If   $T=U-(1-\alpha) U\xi\otimes \xi$, then
	\begin{equation}\label{eq:one dim pert iso}
	 I-T^*T = (1-|\alpha|^2) \xi\otimes\xi.
	\end{equation}
	Consequently, $\|T\|\le 1$ if and only if $|\alpha|\le 1$, in which case $D_T^2$ is given by~\eqref{eq:one dim pert iso}.
\end{lemma}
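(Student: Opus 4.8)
The plan is to compute $T^*T$ directly from the definition $T = U - (1-\alpha)U\xi\otimes\xi$ and exploit the fact that $U$ is an isometry, so $U^*U = I$. First I would expand the product: writing $P = \xi\otimes\xi$ for the rank-one orthogonal projection onto $\C\xi$ (using $\|\xi\|=1$), we have $T = U(I-(1-\alpha)P)$, hence $T^*T = (I-(1-\alpha)P)^*U^*U(I-(1-\alpha)P) = (I-(1-\bar\alpha)P)(I-(1-\alpha)P)$. Expanding and using $P^* = P = P^2$ gives $T^*T = I - (1-\alpha)P - (1-\bar\alpha)P + (1-\bar\alpha)(1-\alpha)P = I - \bigl(2 - \alpha - \bar\alpha - (1-\bar\alpha)(1-\alpha)\bigr)P$. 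A short algebraic simplification of the scalar coefficient, $2 - \alpha - \bar\alpha - (1 - \alpha - \bar\alpha + |\alpha|^2) = 1 - |\alpha|^2$, yields exactly $I - T^*T = (1-|\alpha|^2)\,\xi\otimes\xi$, which is~\eqref{eq:one dim pert iso}.

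From this identity the remaining assertions follow immediately. The operator $(1-|\alpha|^2)\xi\otimes\xi$ is positive (equivalently $I-T^*T\ge 0$, i.e.\ $\|T\|\le 1$) precisely when $1-|\alpha|^2\ge 0$, that is $|\alpha|\le 1$; and when this holds, $D_T^2 = I - T^*T$ is by definition given by the same formula~\eqref{eq:one dim pert iso}. There is essentially no obstacle here: the only point requiring a little care is the bookkeeping of adjoints and the use of $\|\xi\|=1$ to guarantee $P$ is an idempotent; everything else is a one-line scalar computation. I would present the factorization $T = U(I-(1-\alpha)P)$ as the organizing observation, since it makes the cancellation of the $U^*U$ factor transparent and reduces the whole lemma to arithmetic with the scalar $1-\alpha$.
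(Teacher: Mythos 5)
Your computation is correct and is exactly the ``simple computation'' the paper alludes to (the paper omits the proof entirely); the factorization $T=U(I-(1-\alpha)\xi\otimes\xi)$ followed by cancellation of $U^*U$ and the scalar identity $2-\alpha-\bar\alpha-(1-\bar\alpha)(1-\alpha)=1-|\alpha|^2$ is the natural and intended argument. Nothing is missing.
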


\begin{theorem}\label{th:abs cont unitaries are dominated}
(i)	If $U\in \B$ is an absolutely continuous unitary operator, then $U$
is not maximal with respect to Harnack domination.

(ii) A unilateral shift of arbitrary multiplicity is not maximal with respect to Harnack domination.

\end{theorem}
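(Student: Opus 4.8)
The plan is to produce, for each of the two cases, an explicit contraction $T$ that strictly Harnack dominates the given operator, thereby showing it is not maximal. Since $T \Prec T'$ holds trivially when $T = T'$, the point is to find $T' \neq U$ with $U \Prec T'$, and the natural candidates are rank-one perturbations of $U$ of the type introduced in Lemma~\ref{le:one dim pert of isometries}. Concretely, given a unit vector $\xi$ and a scalar $\alpha$ with $|\alpha| < 1$, set $T' = U - (1-\alpha) U\xi \otimes \xi$; by Lemma~\ref{le:one dim pert of isometries} this is a contraction with $D_{T'}^2 = (1-|\alpha|^2)\,\xi\otimes\xi$, so $\mathcal{D}_{T'}$ is one-dimensional and $D_{T'}h$ is a nonzero multiple of $\xi$ whenever $\langle h,\xi\rangle \neq 0$. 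The task is then to verify $U \Prec T'$, and the tool for this is the resolvent criterion of Theorem~\ref{thm:res}: since $U$ is an isometry, it suffices to check that for every $h$ and every $\lambda \in \D$,
\[
\|(I-\lambda U)^{-1}(U-T')h\|^2 \le \frac{c}{1-|\lambda|^2}\,\|D_{T'}h\|^2
\]
for some constant $c$ independent of $h$ and $\lambda$. Now $(U-T')h = (1-\alpha)\langle h,\xi\rangle\, U\xi$, so the left-hand side equals $|1-\alpha|^2 |\langle h,\xi\rangle|^2 \|(I-\lambda U)^{-1}U\xi\|^2$, while $\|D_{T'}h\|^2 = (1-|\alpha|^2)|\langle h,\xi\rangle|^2$; the scalar $|\langle h,\xi\rangle|^2$ cancels, and the inequality reduces to the vector estimate
\[
\|(I-\lambda U)^{-1}U\xi\|^2 \le \frac{c'}{1-|\lambda|^2} \qquad (\lambda \in \D),
\]
with $c' = c(1-|\alpha|^2)/|1-\alpha|^2$. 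So everything comes down to showing that $\xi$ can be chosen so that the Poisson-type average $(1-|\lambda|^2)\|(I-\lambda U)^{-1}U\xi\|^2$ stays bounded on $\D$.

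For part (i), write $\|(I-\lambda U)^{-1}U\xi\|^2 = \int_{\T} \frac{1}{|1 - \lambda z|^2}\,d\mu_\xi(z)$ where $\mu_\xi = \langle E_U \cdot \xi, \xi\rangle$. Since $U$ is absolutely continuous, $d\mu_\xi = w\,dm$ for some $w \in L^1(\T)$, $w \ge 0$. If we can choose $\xi$ so that $w \in L^\infty(\T)$ — equivalently, pick any nonzero $\xi$ in the range of the spectral projection associated with a set where the "density" of $U$ is bounded, or simply observe that the subspace of vectors with bounded spectral density is nonzero because $E_U$ is nonzero and one can restrict to $E_U(\sigma)\h$ for suitable $\sigma$ — then
\[
(1-|\lambda|^2)\int_\T \frac{d\mu_\xi(z)}{|1-\lambda z|^2} \le \|w\|_\infty \int_\T \frac{1-|\lambda|^2}{|1-\lambda z|^2}\,dm(z) = \|w\|_\infty,
\]
the Poisson integral of the constant $1$. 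This gives the required bound, hence $U \Prec T'$ with $T' \neq U$, so $U$ is not maximal. For part (ii), when $U$ is a unilateral shift $S$ of arbitrary multiplicity, one takes $\xi$ to be a unit vector in the "first" copy of the multiplicity space (a cyclic-type wandering vector), so that $\{S^n \xi\}_{n\ge 0}$ is orthonormal; then $(I - \lambda S)^{-1}S\xi = \sum_{n\ge 0}\lambda^n S^{n+1}\xi$ has $\|(I-\lambda S)^{-1}S\xi\|^2 = \sum_{n\ge 0}|\lambda|^{2n} = \frac{1}{1-|\lambda|^2}$, so $(1-|\lambda|^2)\|(I-\lambda S)^{-1}S\xi\|^2 = 1$, again bounded. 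Hence the resolvent estimate holds with $c' = 1$, $S \Prec S'$ with $S' = S - (1-\alpha)S\xi\otimes\xi \neq S$, and $S$ is not maximal.

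The main obstacle is part (i): one must be sure that an absolutely continuous unitary always carries a nonzero vector $\xi$ whose spectral density with respect to Lebesgue measure is bounded. This is where some care is needed — a general $\xi$ need only give $w \in L^1$, not $L^\infty$. The fix is standard: decompose $w$ along the level sets $\{w \le N\}$; since $w > 0$ on a set of positive measure, for large $N$ the set $\sigma_N = \{w \le N, \ w > 0\}$ has $\mu_\xi(\sigma_N) > 0$, and replacing $\xi$ by $E_U(\sigma_N)\xi \neq 0$ yields a vector whose spectral measure has density bounded by $N$. (If one merely wants $U$ not maximal, it is even enough to work on any reducing subspace on which $U$ is unitarily equivalent to multiplication by $z$ on $L^2$ of a bounded-density measure.) Apart from this point, the computation is the elementary one above, and no estimate is delicate. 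One should also double-check that $T' \neq U$: indeed $T' - U = -(1-\alpha)U\xi\otimes\xi$, which is nonzero precisely because $\alpha \neq 1$, and $|\alpha| < 1$ was assumed. Finally, note that $T'$ so constructed is actually in the Harnack part of $U$ only if $U \Prec T'$ and $T' \Prec U$ both hold; we do not need the reverse domination — we need only that $U$ is dominated by something properly larger, which is exactly what we have shown.
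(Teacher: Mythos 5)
Your proof is correct and follows essentially the same route as the paper: both dominate the given operator by the rank-one perturbation of Lemma~\ref{le:one dim pert of isometries} and verify the domination via the resolvent criterion of Theorem~\ref{thm:res}, reducing everything to the boundedness of the spectral density of a suitably chosen vector. The only (harmless) differences are that in (i) the paper passes to the multiplication model on $L^2(\omega,d\nu)$ with $\xi=1$ instead of your level-set truncation of the Radon--Nikodym derivative, and in (ii) it restricts the $L^2$ construction to $H^2$ via Lemma~\ref{le:simple properties of Harnack domination}~(iii) rather than computing directly with a wandering vector.
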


\begin{proof}
	(i) Applying the spectral theorem and Lemma \ref{le:simple properties of Harnack domination}, (ii),
	we may assume that $U$ is the operator of
multiplication by
the variable $\zeta$ on $L^2(\omega, d\nu)$, where $\omega\subset [0, 2\pi]$ and $d\nu$ is
Lebesgue measure normalized so as to have $\nu(\omega)=1$. If $\xi=1$ (the constant function),
then taking $\alpha=0$ in Lemma~\ref{le:one dim pert of isometries} it follows that the operator $T=U-U\xi\otimes \xi$
is a contraction, while~\eqref{eq:one dim pert iso} yields
	\begin{equation}\label{eq:abs cont 1}
	\|D_T f\|^2= \langle D_T^2f, f\rangle= |\langle f,1\rangle|^2
	\end{equation}
	for all $f\in L^2(\omega, d\nu)$.
	Also, since $(U-T)f=\langle f,1\rangle e^{it}$, we have for such an $f$ and $|\lambda|<1$
	\begin{equation}\label{eq:abs cont 2}
	\begin{split}
	\| (I-\lambda U)^{-1} (U-T)f\|^2 &=
	\|(I-\lambda U)^{-1} \langle f,1\rangle 1\|^2\\
&	= \int_\omega \frac{|\langle f,1\rangle|^2}{|1-\lambda e^{it}|^2}  d\nu(t).
	\end{split}
	\end{equation}
	Since
	\[
	\begin{split}
		\int_\omega \frac{1}{|1-\lambda e^{it}|^2}  d\nu(t)
	&=\frac{1}{m(\omega)}\int_\omega \frac{1}{|1-\lambda e^{it}|^2}  dm(t)\\& \le \frac{1}{m(\omega)}\int_{[0,2\pi]} \frac{1}{|1-\lambda e^{it}|^2}  dm(t) = \frac{1}{m(\omega)(1-|\lambda|^2)},
	\end{split}
\]
we obtain by~\eqref{eq:abs cont 1} and~\eqref{eq:abs cont 2},
\[
\| (I-\lambda U)^{-1} (U-T)f\|^2 \le \frac{1}{m(\omega)(1-|\lambda|^2)} 	\|D_T f\|^2.
\]
By Theorem~\ref{thm:res}, it follows that $U$ is Harnack dominated by $T$, and is therefore not maximal. 	

(ii) By Lemma~\ref{le:simple properties of Harnack domination} (ii) it is enough to show the non-maximality of
the unilateral shift of multiplicity one, which is unitarily equivalent to the restriction
to  $H^2$ of the unitary operator $U$ defined as multiplication by the
variable $\zeta$ acting on $L^2([0,2\pi], dm)$. In the first part of the proof
we have shown that $U$ is Harnack dominated by $T=U-U\xi\otimes \xi$, where $\xi$ is the
constant function. Since $U\xi\in H^2$, $T H^2\subset H^2$. Therefore the assertion follows
from  Lemma~\ref{le:simple properties of Harnack domination} (iii).
\end{proof}

We can give now the promised characterization of  elements maximal with respect to Harnack domination.

\begin{theorem}\label{th:charact maximal elements}
A contraction $T\in \B$ is a maximal element with respect to Harnack domination if and only if it is a singular unitary operator.	
\end{theorem}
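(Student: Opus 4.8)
The plan is to prove the two implications separately. The ``if'' direction is already available: Corollary~\ref{cor:alpha-max} asserts that every singular unitary operator is maximal. So the substance of the proof is the ``only if'' direction, namely that a maximal element must be a singular unitary.

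First I would use the reduction recorded before Theorem~\ref{thm:1.1}: a maximal $T$ is Harnack equivalent only to itself, so by the result of \cite{KSS} quoted there, $T$ is an isometry or a coisometry. Next I would reduce the coisometric case to the isometric one. Maximality is preserved under taking adjoints: if $T$ is maximal and $T^*\Prec S$, then Lemma~\ref{le:simple properties of Harnack domination}(iv) gives $T=T^{**}\Prec S^*$, whence $S^*=T$, i.e. $S=T^*$, so $T^*$ is maximal. Since the map $\zeta\mapsto\bar\zeta$ on $\T$ preserves Lebesgue measure, the adjoint of a singular unitary is again a singular unitary. Hence it suffices to show that a maximal isometry is a singular unitary.

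So let $T$ be a maximal isometry. The key elementary observation is hereditary in nature: if $T=T_1\oplus T_2$ with respect to a reducing decomposition and $T_1$ is not maximal, say $T_1\Prec T_1'$ with $T_1'\ne T_1$, then Lemma~\ref{le:simple properties of Harnack domination}(ii) gives $T\Prec T_1'\oplus T_2\ne T$, so $T$ is not maximal either. Applying this to the Wold decomposition $T=U\oplus S$, with $U$ unitary and $S$ a unilateral shift, Theorem~\ref{th:abs cont unitaries are dominated}(ii) forces the shift part to vanish, so $T$ is unitary. Applying the same observation to the Lebesgue decomposition $T=U_{\mathrm{ac}}\oplus U_{\mathrm{s}}$ of the spectral measure of $T$, Theorem~\ref{th:abs cont unitaries are dominated}(i) forces $U_{\mathrm{ac}}=0$, so $T=U_{\mathrm{s}}$ is a singular unitary.

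This argument is essentially an assembly of results already established, so I do not foresee a serious obstacle; the only points requiring a little care are verifying that maximality and singular unitarity are both stable under adjunction, and checking that the Wold and Lebesgue splittings are by reducing subspaces, so that Lemma~\ref{le:simple properties of Harnack domination}(ii) genuinely applies.
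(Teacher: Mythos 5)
Your proposal is correct and follows essentially the same route as the paper: reduce to an isometry via the \cite{KSS} rigidity result and Lemma~\ref{le:simple properties of Harnack domination}(iv), kill the shift part of the Wold decomposition using Theorem~\ref{th:abs cont unitaries are dominated}(ii), and kill the absolutely continuous unitary part using Theorem~\ref{th:abs cont unitaries are dominated}(i) together with Corollary~\ref{cor:alpha-max}. Your explicit justification that non-maximality of a direct summand propagates to the whole operator (via Lemma~\ref{le:simple properties of Harnack domination}(ii)) is exactly the step the paper leaves implicit.
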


\begin{proof}
	Suppose $T\in \B$ is maximal with respect to Harnack domination. In particular, it follows that
the Harnack equivalence class containing $T$ is reduced to $\{T\}$, whence it
follows by~\cite[Corollary 3.4]{KSS} that $T$ is an isometry or a coisometry.
Since $T$ is maximal if and only if $T^*$ is maximal, we may assume that $T$ is an isometry.
	
	By the Wold decomposition, we can write $T=S\oplus U$, where $S$ is a unilateral
shift of some multiplicity and $U$ is unitary. By
Theorem~\ref{th:abs cont unitaries are dominated} (ii) $S$ cannot appear,
and thus $T$ has to be unitary. Then the assertion follows from Corollary~\ref{cor:alpha-max}
and Theorem~\ref{th:abs cont unitaries are dominated} (i).
\end{proof}

\section{Ergodic properties and spectrum}

An interesting feature of Harnack domination of contractions is the way it implies preservation of certain properties.
The results of this section show, in particular,  that this is true about
the peripheral spectrum. Our development will go through establishing some ergodic properties.

The following lemma is proved in~\cite[Theorem 3.1]{KSS}.

\begin{lemma}\label{le:KSS row matrix}
	Let $T$ and $T'$ be contractions on $\h$ such that
	$T\Prec T'$. If $C$ denotes the bounded linear operator defined by
	\begin{equation}\label{eq:definition of C}
	CD_{T'}h=(T-T')h, \quad h\in \h,
	\end{equation}
	then the linear operator $X:l_{\N}^2 (\mathcal{D}_{T'})\to \h$ having the row
	matrix representation
	$$
	X=[C,TC,T^2C, ...]
	$$
	is also bounded.
\end{lemma}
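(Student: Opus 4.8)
The plan is to show that the row operator $X=[C,TC,T^2C,\dots]$ is bounded by exhibiting a uniform bound on its finite truncations, using the resolvent estimate from Theorem~\ref{thm:res}. Concretely, boundedness of $X$ on $l^2_{\N}(\mathcal{D}_{T'})$ is equivalent to the existence of a constant $M$ with
\[
\Big\| \sum_{n\ge 0} T^n C d_n \Big\|^2 \le M \sum_{n\ge 0}\|d_n\|^2
\]
for all finitely supported sequences $(d_n)\subset \mathcal{D}_{T'}$; and since each $d_n$ can be approximated by $D_{T'}h_n$, it suffices to bound $\big\| \sum_{n} T^n C D_{T'} h_n\big\|^2 = \big\| \sum_n T^n (T-T')h_n\big\|^2$ by $M\sum_n \|D_{T'}h_n\|^2$. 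So the task reduces to a single operator-theoretic inequality about $T$ and $T-T'$.

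First I would reorganize the left-hand side. Writing $g_n = (T-T')h_n$, one has $\sum_{n=0}^N T^n g_n$, and the idea is to recognize this as a Taylor coefficient manipulation of the resolvent $(I-\lambda T)^{-1}$. Indeed, $(I-\lambda T)^{-1}\big(\sum_n \lambda^n g_n\big)$ has power-series coefficients that are exactly the partial sums $\sum_{k\le m} T^{m-k} g_k$; the full sum $\sum_{n=0}^N T^n g_n$ appears as (a limit of) such expressions, or can be extracted by a contour/Parseval argument. The cleanest route is: for $|\lambda|<1$, set $g(\lambda)=\sum_{n=0}^N \lambda^n g_n$, so that by Theorem~\ref{thm:res} applied termwise (linearity of $h\mapsto (I-\lambda T)^{-1}(T-T')h$),
\[
\big\|(I-\lambda T)^{-1} g(\lambda)\big\|^2 = \Big\|\sum_{n=0}^N \lambda^n (I-\lambda T)^{-1}(T-T')h_n\Big\|^2,
\]
and one wants to integrate $|\lambda|^2$-weighted over the circle $|\lambda|=r$ and let $r\to 1$. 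Expanding $(I-\lambda T)^{-1}(T-T')h_n = \sum_{m\ge 0}\lambda^m T^m g_n$ and integrating over $\lambda = re^{i\theta}$, the cross terms in $\theta$ vanish by orthogonality of $\{e^{ik\theta}\}$, leaving (after multiplying by $(1-r^2)$ and integrating) a bound of the shape $\frac{1}{2\pi}\int_0^{2\pi}(1-r^2)\big\|(I-re^{i\theta}T)^{-1}g(re^{i\theta})\big\|^2\,d\theta \le \frac{c}{2\pi}\int_0^{2\pi}\big\|D_{T'}g(re^{i\theta})^{\flat}\big\|^2 d\theta$ — here one must be slightly careful that Theorem~\ref{thm:res} is an inequality in $h$, not linear, so the right approach is to apply it to the single vector-valued integral rather than term by term. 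The honest version: fix $\lambda$, apply \eqref{eq:res estim} to $h=\sum_n \lambda^n h_n$, getting $\|(I-\lambda T)^{-1}(T-T')\sum_n\lambda^n h_n\|^2 \le \frac{c}{1-|\lambda|^2}\|D_{T'}\sum_n \lambda^n h_n\|^2$; then integrate both sides over $|\lambda|=r$ against $d\theta/2\pi$ and multiply by $(1-r^2)$. The left side, by Parseval in $\theta$, dominates $r^{2N}\sum_{n=0}^N\|T^{?}\dots\|$ — more precisely it equals $\sum_{k\ge 0} r^{2k}\|(\text{$k$-th coefficient})\|^2$ where the coefficients are partial sums $\sum_{j: j\le k,\, k-j\le ?}\dots$; extracting a clean lower bound for the particular combination $\sum_{n=0}^N T^n g_n$ is the delicate point. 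The right side, again by Parseval, equals $\sum_k r^{2k}\|D_{T'}h_k\|^2 \le \sum_k \|D_{T'}h_k\|^2$, which is exactly the target upper bound $M\sum\|d_n\|^2$ with $M=c$.

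The main obstacle is precisely this extraction step: the quantity $\big\|\sum_{n=0}^N T^n g_n\big\|^2$ is not literally one of the Fourier coefficients of $(I-\lambda T)^{-1}g(\lambda)$, so I cannot just read it off Parseval. The fix is to choose the test vector more cleverly: instead of $g(\lambda)=\sum\lambda^n g_n$, use $\sum_n \lambda^{-n} g_n$ formally, or — better — run the argument with the reversed indexing $h = \sum_{n=0}^N \lambda^{N-n} h_n$, so that the top Fourier coefficient (order $N$) of $(I-\lambda T)^{-1}(T-T')h$ is exactly $\sum_{n=0}^N T^{N-n}g_n = \sum_{k=0}^N T^k g_{N-k}$, which after relabeling is the sum we want. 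Then Parseval gives $\big\|\sum_{k=0}^N T^k g_{N-k}\big\|^2 \le \sum_{m\ge 0} r^{2m}\|(\text{$m$-th coeff})\|^2 \le \frac{c}{1-r^2}\cdot\frac{1}{2\pi}\int \|D_{T'} h(re^{i\theta})\|^2 d\theta$ after multiplying by $(1-r^2)$ and noting $r^{2N}\to 1$; and the right side tends to $c\sum_{n=0}^N\|D_{T'}h_n\|^2$ as $r\to 1$. This yields $\|X\|^2\le c$, completing the proof. I would also remark that the same computation shows $\|X\|\le \|A\|$ for the intertwiner $A$ of Theorem~\ref{thm:1.1}(v), but the resolvent route above is self-contained given Theorem~\ref{thm:res}.
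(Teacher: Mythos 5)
Your reduction of the lemma to the uniform inequality $\|\sum_{n=0}^N T^n(T-T')h_n\|^2\le M\sum_{n=0}^N\|D_{T'}h_n\|^2$ is correct, and encoding the sum as the $N$-th Taylor coefficient of $\lambda\mapsto (I-\lambda T)^{-1}(T-T')h(\lambda)$ with $h(\lambda)=\sum_n\lambda^{N-n}h_n$ is a reasonable idea. But the final step does not close. Applying \eqref{eq:res estim} pointwise on the circle $|\lambda|=r$ and integrating, Parseval gives
\[
r^{2N}\Big\|\sum_{n=0}^N T^n(T-T')h_n\Big\|^2\ \le\ \frac{c}{1-r^2}\sum_{n=0}^N r^{2(N-n)}\|D_{T'}h_n\|^2 .
\]
You propose to ``multiply by $(1-r^2)$ and let $r\to1$'', but multiplying \emph{both} sides by $(1-r^2)$ sends the left-hand side to $0$, so the limiting inequality is vacuous; and if instead you keep the displayed inequality and optimize over $r$ (the best choice is $r^2=N/(N+1)$), you only obtain $\|\sum_n T^n(T-T')h_n\|^2\le c\,e\,(N+1)\sum_n\|D_{T'}h_n\|^2$, a bound growing linearly in $N$ rather than a uniform one. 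The factor $1/(1-|\lambda|^2)$ in \eqref{eq:res estim} is precisely what blocks the extraction of a single Taylor coefficient with a uniform constant: the resolvent estimate only controls Abel averages of the quantities $\|T^m(T-T')h\|^2$, not the unaveraged sums the row operator requires. (Indeed, for non-isometric $T$ the estimate \eqref{eq:res estim} is only one half of the Z-domination of the M\"obius transforms and is strictly weaker than $T\Prec T'$, so a proof from \eqref{eq:res estim} alone should not be expected to work.) This is a genuine gap in the main line of your argument.

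The observation in your closing sentence is, in fact, the proof — and it is essentially the argument of \cite[Theorem 3.1]{KSS}, which the paper cites instead of reproving — but it is a different computation from the resolvent one and needs to be carried out. Write $\ka'=\h\oplus \ell^2_{\N}(\mathcal{D}_{T'})$ and $\ka=\h\oplus \ell^2_{\N}(\mathcal{D}_{T})$ for the minimal isometric dilation spaces, and take $A$ as in Theorem~\ref{thm:1.1}(v). Since $A\h\subset\h$ and $A|\h=I$, the operator $A$ has block form $\left(\begin{smallmatrix} I & B\\ 0 & \ast\end{smallmatrix}\right)$ with $B=[B_0,B_1,\dots]\colon \ell^2_{\N}(\mathcal{D}_{T'})\to\h$. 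Applying $AV'=VA$ to $h\oplus 0$ and comparing $\h$-components gives $T'h+B_0D_{T'}h=Th$, i.e.\ $B_0=C$; applying it to $0\oplus(d_0,d_1,\dots)$ gives $B_{k+1}=TB_k$ for all $k$, hence $B_k=T^kC$ and $B=X$. Therefore $\|X\|\le\|A\|\le c$, which is the assertion with a sharp constant. I recommend replacing the resolvent argument by this one.
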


Note that the boundedness of $C$ is given by Theorem~\ref{thm:1.2}.

\begin{theorem}\label{te39}
	Let $T$ and $T'$ be contractions on $\h$ such that
	$T\Prec T'$. Then:
	
	\begin{itemize}
		\item[(i)] $\n(I-T)=\n(I-T')$ and $\overline{\R(T-T')}\subset \overline{\R(I-T)}=\overline{\R(I-T')}$.
		\item[(ii)]
		With respect to the decomposition $\h=\n(I-T)\oplus \overline{\R(I-T)}$ we have
		\begin{equation}\label{eq:decompositions I oplus T_1}
			T=I\oplus T_1,\quad T'=I\oplus T_1',
		\end{equation}
		and $T_1\Prec T_1'$.
		\item[(iii)]  For every sequence $\{\alpha _n\}\subset l_{\N}^2(\C)$ the series
		$$
		\sum_{n=0}^{\infty} \alpha _n T^n (T-T')h
		$$
		converges in norm, for every $h\in \h$.
		
	\end{itemize}
	
\end{theorem}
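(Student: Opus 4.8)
The plan is to prove the three assertions of Theorem~\ref{te39} roughly in the order stated, using as the principal tool the operator $C$ and the row-bounded operator $X=[C,TC,T^2C,\dots]$ from Lemma~\ref{le:KSS row matrix}, together with the resolvent estimate of Theorem~\ref{thm:res} and the basic properties of Harnack domination from Lemma~\ref{le:simple properties of Harnack domination}.

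For part~(i), I would argue as follows. If $h\in\n(I-T')$, then $D_{T'}h=0$ since $T'$ is a contraction fixing $h$ (because $\|T'h\|=\|h\|$ forces $\langle D_{T'}^2 h,h\rangle=0$), hence by~\eqref{eq:definition of C} we get $(T-T')h=Ch=CD_{T'}h=0$, so $Th=T'h=h$; this gives $\n(I-T')\subseteq\n(I-T)$. The reverse inclusion is subtler: if $Th=h$, I want $T'h=h$. Here I would invoke Theorem~\ref{thm:res} applied with the roles unchanged: $\|(I-\lambda T)^{-1}(T-T')h\|^2\le \frac{c}{1-|\lambda|^2}\|D_{T'}h\|^2$. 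With $Th=h$ we have $(I-\lambda T)^{-1}(T-T')h=\frac{1}{1-\lambda}\big((T-T')h\big)$ provided $(T-T')h$ is also fixed by $T$ — but it is, since $(T-T')h=(I-T')h$ and $T(I-T')h=(I-T')Th=(I-T')h$ using $Th=h$. Wait: that needs $TT'h=T'Th=T'h$, i.e. $T$ and $T'$ to commute on this vector, which need not hold. Instead I would use the cleaner route via the adjoint: from Lemma~\ref{le:simple properties of Harnack domination}(iv), $T^*\Prec T'^*$, and combine with a symmetric application; alternatively note that $C$ is bounded with $CD_{T'}=(T-T')$ and that $\overline{\R(D_{T'})}^{\perp}=\n(D_{T'})=\n(I-T')$, so $(T-T')\n(I-T')=0$ already gives $T|_{\n(I-T')}=I$, and for the other inclusion one uses that $\h=\n(I-T)\oplus\overline{\R(I-T)}$ is a \emph{reducing}-type decomposition and that the two fixed spaces must coincide because $T-T'=CD_{T'}$ annihilates $\n(D_{T'})$ while on $\overline{\R(I-T)}$ the part $T_1=I-(I-T)|$ has trivial fixed space by construction. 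For $\overline{\R(T-T')}\subset\overline{\R(I-T)}$: since $(T-T')h=CD_{T'}h$ and $\R(D_{T'})\subseteq\overline{\R(I-T')}=\overline{\R(I-T)}$ (the last equality to be proven) and $T$ leaves $\overline{\R(I-T)}$ invariant and $C$ maps into it, the claim follows; the equality $\overline{\R(I-T)}=\overline{\R(I-T')}$ is dual to $\n(I-T^*)=\n(I-T'^*)$, obtained by applying the first part to $T^*\Prec T'^*$.

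For part~(ii), the decomposition $\h=\n(I-T)\oplus\overline{\R(I-T)}$ is orthogonal and, by part~(i), the same for $T'$; both $T$ and $T'$ fix $\n(I-T)$ pointwise, so $T=I\oplus T_1$ and $T'=I\oplus T_1'$. To apply Lemma~\ref{le:simple properties of Harnack domination}(iii) and conclude $T_1\Prec T_1'$ I need $\overline{\R(I-T)}$ to be invariant for both $T$ and $T'$; invariance for $T$ is standard, and invariance for $T'$ follows because $T'$ fixes the orthogonal complement $\n(I-T)=\n(I-T')$ and $T'$ is not necessarily self-adjoint — so I would instead argue directly on the semispectral / Poisson-kernel characterization restricted to the subspace, or observe that $\overline{\R(I-T')}=\overline{\R(I-T)}$ is $T'$-invariant and contains the complement correctly, hence Lemma~\ref{le:simple properties of Harnack domination}(iii) applies with $\h'=\overline{\R(I-T)}$.

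For part~(iii), this is where the row-bounded operator earns its keep, and I expect it to be the technically cleanest part. Given $\{\alpha_n\}\in\ell^2_{\N}(\C)$ and $h\in\h$, write $(T-T')h=CD_{T'}h=:Cg$ with $g=D_{T'}h\in\mathcal{D}_{T'}$. Then
\[
\sum_{n=0}^{N}\alpha_n T^n(T-T')h=\sum_{n=0}^N \alpha_n T^n C g = X\big((\alpha_n g)_{n\ge 0}\big)_{\text{truncated}},
\]
where $(\alpha_n g)_n\in\ell^2_{\N}(\mathcal{D}_{T'})$ because $\sum_n\|\alpha_n g\|^2=\|g\|^2\sum_n|\alpha_n|^2<\infty$. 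Since $X$ is bounded, the partial sums $X$ applied to the truncations of $(\alpha_n g)_n$ converge in norm to $X\big((\alpha_n g)_n\big)$, which is exactly the assertion. The main obstacle in the whole theorem is not part~(iii) but the reverse inclusion $\n(I-T)\subseteq\n(I-T')$ in part~(i): one must rule out that $T$ acquires extra fixed vectors not fixed by $T'$, and the right tool is the resolvent estimate from Theorem~\ref{thm:res}, letting $|\lambda|\to 1$ along $\lambda\to 1$: if $Th=h$ but $T'h\ne h$ then the left side of~\eqref{eq:res estim} blows up like $(1-|\lambda|)^{-2}$ while the right side only grows like $(1-|\lambda|)^{-1}$, a contradiction — provided one first checks that $(T-T')h$ is not orthogonal to the relevant eigenspace, which it is not since $(T-T')h=(I-T')h\ne 0$ and the projection onto the $T$-eigenspace at $1$ of this vector is nonzero (here one uses $Th=h$ to see $(I-\lambda T)^{-1}(I-T')h$ has a genuine pole at $\lambda=1/\overline{1}=1$ only if the eigencomponent survives, which a short computation confirms).
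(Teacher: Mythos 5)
Your part (iii) is exactly the paper's argument: write $(T-T')h=CD_{T'}h$ and apply the bounded row operator $X$ of Lemma~\ref{le:KSS row matrix} to the square-summable sequence $(\alpha_n D_{T'}h)_n$. The easy inclusion $\n(I-T')\subset\n(I-T)$ also matches. The real content of (i) is the reverse inclusion, and there your \emph{final} argument is valid but genuinely different from the paper's. The paper deduces from the boundedness of $X$ that $\|C^*T^{*n}h\|\to 0$, hence $\n(I-T)=\n(I-T^*)\subset\n(C^*)=\n(T^*-T'^{*})$, and concludes. You instead let $\lambda\to 1^-$ radially in the resolvent estimate~\eqref{eq:res estim}: since $\n(I-T)$ reduces $T$, the component $u$ of $(T-T')h$ in $\n(I-T)$ contributes $\|u\|^2/|1-\lambda|^2$ to the left-hand side while the right-hand side is $O\bigl((1-|\lambda|)^{-1}\bigr)$, forcing $u=0$; then $\langle (I-T')h,h\rangle=0$ together with $\|T'h\|\le\|h\|$ gives $T'h=h$ by the equality case of Cauchy--Schwarz (this is the ``short computation'' you defer, and it does check out). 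This is a legitimate alternative, arguably more self-contained for part (i); the paper's route has the advantage of reusing for (i) the same Lemma~\ref{le:KSS row matrix} that drives (iii). You should, however, discard the two abandoned attempts you record on the way: the commutation route fails as you yourself note, and the ``symmetric application to $T^*\Prec T'^{*}$'' only reproduces $\n(I-T')\subset\n(I-T)$ (fixed points of a contraction and of its adjoint coincide), so it cannot give the reverse inclusion.

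One step as written is circular: for $\overline{\R(T-T')}\subset\overline{\R(I-T)}$ you invoke ``$C$ maps into $\overline{\R(I-T)}$,'' but the range of $C$ on $\R(D_{T'})$ is by definition $\R(T-T')$, which is precisely what you are trying to locate. The repair is immediate from what you already have: the kernel equality applied to $T^*\Prec T'^{*}$ gives $\n(I-T^*)=\n(I-T'^{*})$, and every $h$ in this common kernel satisfies $(T^*-T'^{*})h=h-h=0$, so $\n(I-T^*)\subset\n(T^*-T'^{*})$; taking orthogonal complements yields $\overline{\R(T-T')}\subset\overline{\R(I-T)}$ and also $\overline{\R(I-T)}=\overline{\R(I-T')}$. (The paper routes this through $\overline{\R(T-T')}\subset\mathcal{D}_{T'^{*}}\subset\overline{\R(I-T')}$, using Theorem~\ref{thm:1.2} for $T^*\Prec T'^{*}$.) Part (ii) is fine once you observe that $\n(I-T)=\n(I-T')$ is reducing for both contractions, so $\overline{\R(I-T)}$ is invariant for $T'$ and Lemma~\ref{le:simple properties of Harnack domination} applies.
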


\begin{proof}
	(i)
		It follows immediately from Theorem~\ref{thm:1.2} that  $T=T'=I$ on $\n(I-T')$, hence
		$\n(I-T')\subset \n(I-T)$.
	
	For the opposite inclusion, note that, if $C$ is defined by~\eqref{eq:definition of C},
then by Lemma~\ref{le:KSS row matrix} it follows, in particular, that for $h\in
	\h$,
	$$
	\|C^*T^{*n}h\|\to 0, \quad n\to \infty.
	$$
	This implies that
	\[
	\n(I-T)=\n(I-T^*)\subset \n(C^*)=\n(T^*-T'^{*}).
	\]
	If $h\in \n(I-T)$, then
	$C^*h=0=(T^*-T'^{*})h$, hence $T'^{*}h=T^*h=h$, that is $h\in
	\n(I-T'^{*})=\n(I-T')$. Therefore $\n(I-T)\subset \n(I-T')$.
	
 Consequently,
	$\n(I-T)=\n(I-T')$, which also implies that
$$
	\overline{\R(T-T')}\subset \mathcal{D}_{T'^{*}} \subset \overline{\R(I-T')}=\overline{\R(I-T)}.
	$$
	Here the first inclusion follows by Theorem \ref{thm:1.2} (ii) from the relation $T^* \Prec T'^*$, while the second inclusion is true
because $\overline{\R(I-T')}^{\perp}=\n(I-T')\subset \n(I-T'T'^{*})=\n(D_{T'^{*}}).$ The last equality is true since $\n(I-T^{*})=\n(I-T^{'*})$ and the same is true for their orthogonal complements. 
	
	(ii) The decompositions in direct sum are an immediate consequence of the contractivity of $T$ and $T'$.
The Harnack domination $T_1\Prec T_1'$ follows then from Lemma~\ref{le:simple properties of Harnack domination} (ii).
	
	(iii) 	 The boundedness of $X$ means, in particular, that for every
	$d=\{d_n\}_{n\in \N} \in l_{\N}^2(\mathcal{D}_{T'})$ the series
	$Xd=\sum_{n=0}^{\infty} T^nCd_n$ converges in the norm of $\h$. Thus, if $\{\alpha _n\} \subset l_{\N}^2(\C)$
then setting $d_n=\alpha _n D_{T'}h$ for $h\in \h$ one obtains that the series
	$$
	\sum_{n=0}^{\infty} \alpha _n T^n (T-T')h
	$$
	converges in norm, for every $h\in \h$.
\end{proof}

A first application of Theorem~\ref{te39} is related to functional calculus.
 Lemma 2.2 of \cite{GHT} states that if $f(z)=\sum_{n=0}^{\infty} \alpha _n z^n$ is an analytic function on $\D$
which has no zeroes in $\D$ and  such that the function $\frac{1}{f}$ has absolutely summable
Taylor coefficients, then, whenever $T$ is a contraction on $\h$ and   $x\in\h$ is
such that $y:=\sum_{n=0}^{\infty} \alpha _n T^n x$ converges weakly, we have $\frac{1}{f}(T)y=x$.  If  $T \Prec T'$,
then Theorem~\ref{te39} (iii) produces a whole class of such vectors $x$, namely those in $\R(T-T')$.
Therefore $\R(T-T')\subset \R(\frac{1}{f}(T))$.

More interesting applications of Theorem~\ref{te39} are related to the \emph{Ces\`aro means} of a contraction $T$. These are defined by
\[
M_n(T)=\frac{1}{n+1} \sum_{j=0}^n T^j.
\]
It is known that $\{M_n(T)\}$ uniformly converges in $\B$ if and only if $\R(I-T)$ is closed (see \cite{L}),
and such a contraction is called {\it uniformly Ces\`aro ergodic}. It is also known (see \cite{Kr}) that if the Ces\`aro means $\{M_n(T)\}$ weakly converge in $\B$, then its limit is the ergodic projection $P_T$, that is the orthogonal projection onto $\n(I-T)$. So, by the decomposition (\ref{eq:decompositions I oplus T_1}) we have $M_n(T)-P_T=0 \oplus M_n(T_1)$ and $T^n-P_T=0\oplus T_1^n$ on $\h=\n(I-T) \oplus \overline{\R(I-T)}$. We have thus proved the following lemma.

\begin{lemma}\label{le:convergence to 0}
	If $T$ is a contraction on $\h$, then any type (weak, strong or uniform) convergence of $\{M_n(T)\}$, respectively of $\{T^n\}$, is equivalent with the corresponding convergence to 0  of $\{M_n(T_1)\}$, or $\{T_1^n\}$ respectively.
\end{lemma}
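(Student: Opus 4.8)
The plan is to exploit the direct-sum decomposition $\h=\n(I-T)\oplus\overline{\R(I-T)}$ from Theorem~\ref{te39}~(ii), under which $T=I\oplus T_1$ and, more generally, every power decomposes as $T^n=I\oplus T_1^n$ and every Ces\`aro mean as $M_n(T)=I\oplus M_n(T_1)$, where on the first summand $I$ acts as the identity and on the second $T_1$ is the compression of $T$. Since the ergodic projection $P_T$ is the orthogonal projection onto $\n(I-T)$, with respect to this decomposition it is exactly $I\oplus 0$. Hence $M_n(T)-P_T=0\oplus M_n(T_1)$ and $T^n-P_T=0\oplus T_1^n$, so the operator norm (or the norm applied to a vector, or the weak matrix coefficients) of $M_n(T)-P_T$ coincides with that of $M_n(T_1)$, and similarly for $T^n-P_T$ versus $T_1^n$.

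First I would record the elementary fact that a block-diagonal operator $A_1\oplus A_2$ acting on an orthogonal direct sum $\h_1\oplus\h_2$ satisfies $\|A_1\oplus A_2\|=\max(\|A_1\|,\|A_2\|)$, and likewise that weak and strong convergence of a sequence of block-diagonal operators with a common decomposition is equivalent to the coordinatewise convergence of the two blocks. Applying this with $A_1=0$ on $\n(I-T)$, one sees that $M_n(T)\to P_T$ (in any of the three topologies) if and only if $M_n(T_1)\to 0$ on $\overline{\R(I-T)}$ in the same topology; similarly $T^n\to P_T$ if and only if $T_1^n\to 0$. But $P_T$ is precisely the only candidate limit: by the cited result of Krengel, whenever $\{M_n(T)\}$ converges weakly its limit must be $P_T$, and whenever $\{T^n\}$ converges weakly, applying this to the Ces\`aro means (which then converge to the same limit) again forces the limit to be $P_T$. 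So ``convergence of $\{M_n(T)\}$'' and ``convergence of $\{M_n(T)\}$ to $P_T$'' are the same statement, and the lemma follows.

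There is essentially no obstacle here: the statement is a bookkeeping consequence of material already assembled. The one point requiring a word of care is the identification of the limit, i.e. that one may replace ``converges'' by ``converges to $P_T$''. For the Ces\`aro means this is immediate from Krengel's theorem as quoted. For the powers $\{T^n\}$ one notes that if $T^n$ converges (weakly, strongly, or uniformly) then so does $M_n(T)$ with the same limit, so that limit is again $P_T$; alternatively, observe directly that on $\n(I-T)$ the sequence $T^n$ is constantly the identity while any limit of $T_1^n$ on $\overline{\R(I-T)}$ must lie in $\n(I-T_1)=\{0\}$ when it exists, which pins the limit to $P_T$. With the identification of limits in hand, the equivalence with convergence to $0$ of the corresponding sequence built from $T_1$ is automatic from the block-diagonal structure, completing the proof.
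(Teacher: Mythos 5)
Your argument is correct and coincides in substance with the paper's: the authors likewise invoke Krengel's identification of the weak limit of $\{M_n(T)\}$ as the ergodic projection $P_T$ and then read off $M_n(T)-P_T=0\oplus M_n(T_1)$ and $T^n-P_T=0\oplus T_1^n$ from the decomposition $T=I\oplus T_1$ on $\h=\n(I-T)\oplus\overline{\R(I-T)}$. Your extra care in pinning the limit of $\{T^n\}$ to $P_T$ is a reasonable elaboration of what the paper leaves implicit.
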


A related notion is
the \emph{one-sided ergodic Hilbert transform} of  $T$, which is given by the formula
\begin{equation}\label{eq:hilbert transform}
H_Tx:=\sum_{n=1}^{\infty} \frac{T^n}{n}x,
\end{equation}
having as domain  the subspace $\Dom H_T$ of vectors $x\in\h$ for which the series in~\eqref{eq:hilbert transform}
is norm convergent. We refer the reader to~\cite{CCL}, where it is also proved that
\[
\R(I-T)\subset \Dom H_T\subset \overline{\R(I-T)}.
\]
It was shown in~\cite[Theorem 4.1]{GHT} that if $x\in\Dom H_T$, then $(\log n)M_n(T)x \to 0$ when $n\to\infty$.

Using Theorem~\ref{te39}, we get the following relationship between the ranges of $I-T$ and $I-T'$ when $T$ is Harnack dominated by $T'$.

\begin{corollary}\label{co:hilbert transform}
	Suppose that $T$ and $T'$ are contractions on $\h$ and $T \Prec T'$. Then
	\[
	\R(I-T)=\R(T-T')+\R(I-T')\subset \Dom H_T.
	\]
	
	In particular, if $T$ and $T'$ are Harnack equivalent then $\R(I-T)=\R(I-T')$.
\end{corollary}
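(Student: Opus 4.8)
\textbf{Proof proposal for Corollary~\ref{co:hilbert transform}.}

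The plan is to assemble the statement from pieces already in hand. First I would establish the equality $\R(I-T)=\R(T-T')+\R(I-T')$ by a double inclusion. The inclusion $\supseteq$ is the easy direction: by Theorem~\ref{te39}~(i) we have $\overline{\R(T-T')}\subset\overline{\R(I-T)}$, but in fact the stronger range inclusion $\R(T-T')\subset\R(I-T)$ holds because $C D_{T'}h=(T-T')h$ and, writing $T-T'=(I-T')-(I-T)$, every vector $(T-T')h$ visibly equals $(I-T')h-(I-T)h$; since $\n(I-T)=\n(I-T')$ by Theorem~\ref{te39}~(i), the operators $I-T$ and $I-T'$ have the same kernel, and one checks that $(I-T')h\in\R(I-T)$ for every $h$ using the decomposition \eqref{eq:decompositions I oplus T_1}: on $\n(I-T)$ both $I-T$ and $I-T'$ vanish, and on $\overline{\R(I-T)}$ we are looking at $I-T_1$ versus $I-T_1'$. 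Actually the cleanest route is: by \eqref{eq:decompositions I oplus T_1}, $I-T=0\oplus(I-T_1)$ and $I-T'=0\oplus(I-T_1')$, so $\R(I-T)=\R(I-T_1)$ and $\R(I-T')=\R(I-T_1')$ as subspaces of $\overline{\R(I-T)}$; thus it suffices to prove the range identity for $T_1,T_1'$, i.e. we may assume $\n(I-T)=\{0\}$. Then $\R(T-T')+\R(I-T')\subseteq\R(I-T)$ follows once we know $\R(I-T')\subseteq\R(I-T)$, and conversely $\R(I-T)\subseteq\R(T-T')+\R(I-T')$ follows once we know $\R(I-T)\subseteq\R(I-T')+\R(T-T')$; but $I-T=(I-T')+(T'-T)$, which gives $\R(I-T)\subseteq\R(I-T')+\R(T-T')$ directly, so the only real content is the reverse inclusion $\R(I-T')\subseteq\R(I-T)+\R(T-T')$, which again is immediate from $I-T'=(I-T)+(T-T')$. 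Hence $\R(I-T)=\R(I-T')+\R(T-T')$ with no symmetry assumption needed at all — the two one-line algebraic identities $I-T=(I-T')+(T'-T)$ and $I-T'=(I-T)+(T-T')$ give both inclusions.

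Next I would prove $\R(I-T)\subset\Dom H_T$. The classical fact recalled from~\cite{CCL} already gives $\R(I-T')\subset\Dom H_{T'}$, but we need the transform of $T$, not $T'$. The point is to handle the two summands of $\R(I-T)=\R(I-T')+\R(T-T')$ separately. For a vector $x=(I-T)u\in\R(I-T)$ the series $\sum_{n\ge1}\frac{1}{n}T^n x$ telescopes against $I-T$: indeed $\sum_{n=1}^{N}\frac1n T^n(I-T)u=\sum_{n=1}^{N}\frac1n T^n u-\sum_{n=1}^{N}\frac1n T^{n+1}u$, which rearranges to a convergent expression because $\frac1n-\frac1{n+1}=O(n^{-2})$ and $\|T^n u\|$ is bounded; this is exactly the standard argument showing $\R(I-T)\subset\Dom H_T$ for any single contraction, and it applies verbatim to $T$ itself. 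So in fact $\R(I-T)\subset\Dom H_T$ is just the Browder–Kato type inclusion for the contraction $T$ alone — no Harnack hypothesis is needed for this half — and I would simply cite~\cite{CCL}.

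Finally, the ``in particular'' clause: if $T$ and $T'$ are Harnack equivalent, then $T'\Prec T$ as well, so applying the range identity with the roles of $T$ and $T'$ swapped gives $\R(I-T')=\R(T'-T)+\R(I-T)=\R(T-T')+\R(I-T)=\R(I-T)$, using $\R(T-T')=\R(T'-T)$. I do not anticipate a genuine obstacle here; the only thing to be careful about is not to over-invoke the asymptotic-limit machinery — the range identity is purely formal, and the Harnack hypothesis enters only through Theorem~\ref{te39}~(iii) if one wants the stronger statement that $\R(T-T')\subset\Dom H_T$ with the series converging in a controlled way, which in turn is what lets one conclude $\R(I-T)\subset\Dom H_T$ cleanly when combined with the elementary telescoping for $\R(I-T')$-vectors. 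The subtle point worth double-checking is whether Theorem~\ref{te39}~(iii) with the specific choice $\alpha_n=\frac1n$ (which lies in $\ell^2_\N(\C)$) indeed yields convergence of $\sum_{n\ge1}\frac1n T^n(T-T')h$, placing $\R(T-T')$ inside $\Dom H_T$; granting that, the inclusion $\R(I-T)\subset\Dom H_T$ is immediate from the decomposition of $\R(I-T)$ into $\R(T-T')+\R(I-T')$ together with the classical $\R(I-T')\subset\Dom H_{T'}$ — except that we need $\Dom H_T$, not $\Dom H_{T'}$, so one really must use the elementary telescoping argument for the $\R(I-T)$-direct summand via \eqref{eq:decompositions I oplus T_1} rather than the external citation, and that is the one place where care is required.
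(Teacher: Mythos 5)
Your proposal handles correctly the parts of the corollary that are elementary: the inclusion $\R(I-T)\subseteq \R(T-T')+\R(I-T')$ coming from the identity $I-T=(I-T')+(T'-T)$, the inclusion $\R(I-T)\subseteq \Dom H_T$ (which, as the paper recalls from \cite{CCL}, holds for every contraction and needs no Harnack hypothesis), and the deduction of $\R(I-T)=\R(I-T')$ in the Harnack-equivalent case by symmetry. But it breaks down exactly where the corollary has content, namely the inclusion $\R(T-T')+\R(I-T')\subseteq \R(I-T)$. You assert that the range identity is ``purely formal'' and that the two identities $I-T=(I-T')+(T'-T)$ and $I-T'=(I-T)+(T-T')$ give both inclusions. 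They do not: they only yield $\R(I-T)\subseteq\R(I-T')+\R(T-T')$ and $\R(I-T')\subseteq\R(I-T)+\R(T-T')$, and at the decisive moment you silently replace the inclusion you actually need, $\R(I-T')\subseteq\R(I-T)$, by the vacuous $\R(I-T')\subseteq\R(I-T)+\R(T-T')$. Your earlier attempt at the same point is circular: you reduce $(T-T')h\in\R(I-T)$ to $(I-T')h\in\R(I-T)$ and then claim the latter ``one checks'' from $\n(I-T)=\n(I-T')$; equality of kernels says nothing about comparability of (non-closed) ranges, and Theorem~\ref{te39}~(i) only compares the \emph{closures} of the ranges. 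That the inclusion cannot be formal is clear from the example $T=I$, $T'\ne I$: then $\R(I-T)=\{0\}$ while $\R(T-T')=\R(I-T')$ need not vanish.

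The paper proves the missing inclusion by establishing $\R(T-T')\subseteq\R(I-T)$ — which, combined with the formal identities, gives $\R(I-T')\subseteq\R(I-T)$ and hence the whole equality — and this is precisely where the hypothesis $T\Prec T'$ enters. The argument is the functional-calculus remark stated just after Theorem~\ref{te39}: by Theorem~\ref{te39}~(iii) the vectors $x\in\R(T-T')$ admit convergent series $\sum_n \alpha_n T^n x$, and Lemma~2.2 of \cite{GHT} applied with $f(z)=(1-z)^{-1}$ (so that $\tfrac1f(T)=I-T$) then yields $x\in\R\bigl(\tfrac1f(T)\bigr)=\R(I-T)$. Your proposal invokes Theorem~\ref{te39}~(iii) only for the redundant purpose of placing $\R(T-T')$ inside $\Dom H_T$, and omits this essential step entirely; as written, the proof of the inclusion $\R(T-T')+\R(I-T')\subseteq\R(I-T)$ is missing.
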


\begin{proof}

We can apply the above remark concerning the functional calculus by choosing the function $f(z)=(1-z)^{-1}$ for $z\in \D$ to conclude that $\R(T-T') \subset \R(I-T)$. This later implies $\R(I-T')\subset \R(I-T)$, and also $\R(T-T')+\R(I-T') \subset \R(I-T)$, while the reverse inclusion is trivial. We obtain the inclusion quoted in corollary. When $T$ and $T'$ are Harnack equivalent we have by symmetry $\R(I-T)=\R(I-T')$.
\end{proof}

These ergodic properties may be used to relate Harnack domination to the spectrum of contractions. Note first that
the following lemma is implicitely proved in~\cite[Theorem 1]{AST}.
As usually $\sigma(T)$ denotes the spectrum of $T$ and $\sigma_p(T)$ its point spectrum.

\begin{lemma}\label{le:AST theorem 1}
	If $T \PrecZ T'$, then $\sigma(T')\cap \T\subset\sigma(T)\cap \T $.
\end{lemma}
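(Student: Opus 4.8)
The goal is to show that if $T \PrecZ T'$ and $\zeta \in \T$ lies in $\sigma(T')$, then $\zeta \in \sigma(T)$; equivalently, if $\zeta \in \T \setminus \sigma(T)$ then $\zeta \in \T \setminus \sigma(T')$. The natural strategy is to replace $T$ and $T'$ by their M\"obius transforms so that the point $\zeta$ is moved to $1$: since Z-domination is designed to interact well with resolvents (Theorem~\ref{thm:1.2}, part (ii)), and since $\zeta \in \sigma(T)$ if and only if $1 \in \sigma(\zeta^{-1}T)$, and the map $T \mapsto \zeta^{-1}T$ clearly preserves the Z-domination relation (it only rotates $V$, $V'$ and does not affect the defect operators up to unitary equivalence), I may assume $\zeta = 1$. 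So it suffices to prove: if $T \PrecZ T'$ and $1 \notin \sigma(T)$, then $1 \notin \sigma(T')$.

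\textbf{Key steps.} The first step is to translate $1 \notin \sigma(T)$ into a quantitative lower bound: $I - T$ is invertible, so there is $\delta > 0$ with $\|(I-T)h\| \ge \delta \|h\|$ for all $h \in \h$, and likewise for $T^*$. The second, and central, step is to use the Z-domination inequalities from Theorem~\ref{thm:1.2}(ii), namely $\|D_{T'}h\| \le$ (comparable to) $\|D_T h\|$ and $\|(T'-T)h\| \le c'\|D_{T'}h\|$, to deduce that $I - T'$ is bounded below. Concretely, write $(I-T')h = (I-T)h + (T-T')h$; the second term is controlled by $\|D_{T'}h\|$, so I need a bound of the form $\|D_{T'}h\| \le C\|(I-T)h\|$. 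This is where the invertibility of $I-T$ must be combined with the defect-operator comparison: from $\|D_{T'}h\|^2 \le c'{}^2 \|D_T h\|^2 = c'{}^2 (\|h\|^2 - \|Th\|^2) = c'{}^2 \langle (I-T^*T)h, h\rangle$, and one estimates $\langle (I-T^*T)h,h\rangle = \langle (I-T^*)h, h\rangle + \langle T^*(I-T)h, h\rangle \le \|(I-T^*)h\|\,\|h\| + \|(I-T)h\|\,\|h\| \le (\text{const})\|(I-T)h\|\,\|h\|$ using the lower bounds for both $I-T$ and $I-T^*$. Thus $\|D_{T'}h\| \le C \|(I-T)h\|^{1/2}\|h\|^{1/2}$. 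Then
\[
\|(I-T')h\| \ge \|(I-T)h\| - \|(T-T')h\| \ge \|(I-T)h\| - c'C\|(I-T)h\|^{1/2}\|h\|^{1/2},
\]
which shows $I-T'$ is bounded below on vectors where $\|(I-T)h\|$ is not too small relative to $\|h\|$ — and since $\|(I-T)h\| \ge \delta\|h\|$ always holds, this gives $\|(I-T')h\| \ge (\delta - c'C\delta^{1/2})\|h\|$; if this is not yet positive one rescales by noting the same estimate applies to $\lambda T, \lambda T'$ for $\lambda$ near $1$, or more cleanly one uses the already-established fact (via Theorem~\ref{thm:res} applied after M\"obius transform, or directly) that the resolvent of $T$ controls $T-T'$ with the $(1-|\lambda|^2)^{-1}$ factor, letting $|\lambda|\to 1$.

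\textbf{Main obstacle.} The subtle point is that a one-sided lower bound gives injectivity of $I - T'$ with closed range but not surjectivity, so it does not immediately yield $1 \notin \sigma(T')$. To close this gap I would run the symmetric argument for the adjoints: since $T \PrecZ T'$ does \emph{not} automatically give $T^* \PrecZ T'^*$, I should instead observe that the hypothesis $1 \notin \sigma(T)$ also gives $1 \notin \sigma(T^*)$, and apply the resolvent estimate \eqref{eq:res estim}-type bound in a form symmetric enough to control $(I-T'^*)h$ as well — or, more efficiently, invoke the cleaner route indicated in the remark before the lemma: the estimate from Theorem~\ref{thm:res} (after the M\"obius normalization sending $\zeta$ to a boundary point) shows $(I-\lambda T)^{-1}(T-T')h \to 0$ in a controlled way, and combined with $\|(I-\lambda T')^{-1}\| = \|(I-\lambda T)^{-1} + (I-\lambda T)^{-1}(T'-T)(I-\lambda T')^{-1}\|$ a Neumann-series / perturbation argument near $\lambda = \zeta$ shows the resolvent of $T'$ stays bounded as $\lambda \to \zeta$, hence $\zeta \notin \sigma(T')$. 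The delicate estimate is controlling the perturbation term $(I-\lambda T)^{-1}(T'-T)(I-\lambda T')^{-1}$ uniformly as $|\lambda| \to 1$, which is exactly what the resolvent form of Z-domination is built to provide; keeping the factors of $(1-|\lambda|^2)^{-1}$ from exploding is the heart of the matter, and this is presumably handled in \cite[Theorem 1]{AST} by precisely the bookkeeping described there.
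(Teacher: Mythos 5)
Your proposal has a genuine gap, and it sits at the very centre of the argument: you quote Theorem~\ref{thm:1.2}(ii) as giving $\|D_{T'}h\|\lesssim\|D_Th\|$, but the Z-domination $T\PrecZ T'$ gives the \emph{opposite} inequality $\|D_Th\|\le c'\|D_{T'}h\|$ (together with $\|(T'-T)h\|\le c'\|D_{T'}h\|$). Consequently your key estimate $\|D_{T'}h\|\le C\|(I-T)h\|^{1/2}\|h\|^{1/2}$ is not available: nothing in the hypothesis bounds $\|D_{T'}h\|$ from above by quantities attached to $T$, so the perturbation term $\|(T-T')h\|\le c'\|D_{T'}h\|$ cannot be absorbed into the lower bound for $\|(I-T)h\|$. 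The fallback routes you sketch do not repair this: the resolvent estimate of Theorem~\ref{thm:res} is proved under the stronger hypothesis $T\Prec T'$, whereas the lemma assumes only $T\PrecZ T'$, and the rescaling/Neumann-series idea is not carried out and would face the same directional problem. Your worry about bounded-below versus invertible is, by comparison, minor (for a contraction $\n(I-T')=\n(I-T'^{*})$, so a lower bound plus the absence of a unimodular eigenvalue already yields invertibility), but it never becomes relevant because the lower bound itself is not obtained.

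The intended argument (the paper gives no proof of its own, deferring to \cite[Theorem 1]{AST}) goes in the direction of the stated inclusion rather than its contrapositive, and then the needed smallness of the defect comes for free. Let $\zeta\in\sigma(T')\cap\T$. Since $\sigma(T')\subset\overline{\D}$, the point $\zeta$ lies on the topological boundary of $\sigma(T')$ and hence in the approximate point spectrum: there are unit vectors $h_n$ with $\|(T'-\zeta I)h_n\|\to0$. Then $\|T'h_n\|\to1$, so $\|D_{T'}h_n\|^2=1-\|T'h_n\|^2\to0$; by Theorem~\ref{thm:1.2}(ii), $\|(T-T')h_n\|\le c'\|D_{T'}h_n\|\to0$, whence $\|(T-\zeta I)h_n\|\to0$ and $\zeta\in\sigma(T)$. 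The moral is that one should perturb approximate eigenvectors of $T'$, whose defect vanishes automatically because $|\zeta|=1$ and $T'$ is a contraction, rather than try to transfer a resolvent lower bound from $T$ to $T'$ --- which is exactly the direction the available inequalities do not support.
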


\begin{theorem}\label{pr812}
	Let $T,T'$ be contractions on $\h$ such that $T \Prec T'$.
Then $\sigma (T) \cap \T = \sigma(T')\cap \T$ and $\sigma _p(T)\cap \T=\sigma_p(T')\cap \T$.
In particular, $\sigma (T) \subset \D$ if and only if $\sigma (T') \subset \D$.

\end{theorem}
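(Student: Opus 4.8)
The plan is to prove the two spectral equalities separately, exploiting the symmetry in the hypothesis together with Lemma~\ref{le:AST theorem 1} and the ergodic results of Theorem~\ref{te39} and its corollaries.

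\textbf{Peripheral spectrum.} Since $T\Prec T'$ implies both $T\PrecZ T'$ and $T^*\PrecZ T'^*$ (Lemma~\ref{le:simple properties of Harnack domination}~(iv) together with the remark that Harnack domination implies Z-domination), Lemma~\ref{le:AST theorem 1} gives $\sigma(T')\cap\T\subset\sigma(T)\cap\T$ directly. For the reverse inclusion I need to show that a point $\zeta\in\T$ lying in $\sigma(T)$ also lies in $\sigma(T')$. It suffices to treat $\zeta=1$: if $\zeta\ne1$, replace $T,T'$ by $\bar\zeta T,\bar\zeta T'$, which still satisfy the Harnack domination (this is immediate from the definition, since $p(z)\mapsto p(\bar\zeta z)$ preserves the class of polynomials with nonnegative real part on $\D$), and $1\in\sigma(\bar\zeta T)$ iff $\zeta\in\sigma(T)$. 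So assume $1\in\sigma(T)$; I want $1\in\sigma(T')$. Equivalently, $I-T$ not invertible should force $I-T'$ not invertible. If $I-T'$ were invertible, then $\R(I-T')=\h$, and Corollary~\ref{co:hilbert transform} gives $\R(I-T)=\R(T-T')+\R(I-T')=\h$, so $I-T$ is surjective; by Theorem~\ref{te39}~(i), $\n(I-T)=\n(I-T')=\{0\}$, so $I-T$ is also injective, and since it is bounded below on a closed range equal to $\h$ (surjective plus injective for a bounded operator on Hilbert space gives invertibility by the open mapping theorem), $I-T$ is invertible, a contradiction. Hence $1\in\sigma(T')$, proving $\sigma(T)\cap\T\subset\sigma(T')\cap\T$, and combined with the first inclusion, $\sigma(T)\cap\T=\sigma(T')\cap\T$.

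\textbf{Peripheral point spectrum.} Again reduce to the eigenvalue $1$ by the rotation trick. By Theorem~\ref{te39}~(i), $\n(I-T)=\n(I-T')$, so $1\in\sigma_p(T)$ iff $1\in\sigma_p(T')$; applying the rotation argument for each $\zeta\in\T$ gives $\sigma_p(T)\cap\T=\sigma_p(T')\cap\T$. (One must check that the rotation trick is compatible: for $\zeta\in\T$, $\bar\zeta T\Prec\bar\zeta T'$ and $\n(\zeta I-T)=\n(I-\bar\zeta T)$, so $\n(\zeta I-T)=\n(\zeta I-T')$ for every $\zeta\in\T$.)

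\textbf{Last assertion.} Finally, $\sigma(T)\subset\D$ means $\sigma(T)\cap\T=\emptyset$ (as $\sigma(T)\subset\overline{\D}$ always for a contraction), which by the first part is equivalent to $\sigma(T')\cap\T=\emptyset$, i.e. $\sigma(T')\subset\D$. The main obstacle is the reverse inclusion $\sigma(T)\cap\T\subset\sigma(T')\cap\T$; the key is recognizing that Corollary~\ref{co:hilbert transform} controls $\R(I-T)$ from above by $\R(I-T')$, so invertibility of $I-T'$ propagates to $I-T$, which together with the kernel equality from Theorem~\ref{te39}~(i) closes the argument. Everything else is the rotation reduction and bookkeeping.
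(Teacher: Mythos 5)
Your proof is correct, and while it shares the paper's overall skeleton (Lemma~\ref{le:AST theorem 1} for the inclusion $\sigma(T')\cap\T\subset\sigma(T)\cap\T$, Corollary~\ref{co:hilbert transform} for the reverse inclusion, and Theorem~\ref{te39}~(i) for the point spectrum), you handle the crucial reverse inclusion by a genuinely different and more elementary final step. The paper, after rotating so that $\lambda=1$ and assuming $I-\overline{\lambda}T'$ invertible, uses the \emph{other} half of Corollary~\ref{co:hilbert transform}, namely $\h=\R(I-\overline{\lambda}T')\subset\Dom H_{\overline{\lambda}T}$; it then invokes the result of \cite{GHT} that $x\in\Dom H_{\overline{\lambda}T}$ forces $(\log n)M_n(\overline{\lambda}T)x\to 0$, applies the uniform boundedness principle to get $\|M_n(\overline{\lambda}T)\|\to 0$, and concludes invertibility of $I-\overline{\lambda}T$ from uniform ergodic theory (Lin's theorem \cite{L}). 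You instead use only the range identity $\R(I-T)=\R(T-T')+\R(I-T')$ from the same corollary to get surjectivity of $I-T$, combine it with injectivity from Theorem~\ref{te39}~(i), and finish with the open mapping theorem. This bypasses the one-sided ergodic Hilbert transform, the logarithmic rate from \cite{GHT}, and the uniform boundedness step entirely; what the paper's heavier route buys is the quantitative ergodic information $\|M_n(\overline{\lambda}T)\|\to0$, which it reuses in Corollary~\ref{co:uniformly ergodic}, whereas your argument is shorter and self-contained given Corollary~\ref{co:hilbert transform}. Your rotation reduction and the point-spectrum argument coincide with the paper's.
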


\begin{proof}
	(i) Let $\lambda \in \T$ be such that $\lambda \notin \sigma (T')$. Since $T \Prec T'$ we
have also $\overline{\lambda} T \Prec \overline{\lambda}T'$ (by Theorem \ref{thm:1.1} (ii), for instance).
Thus, by Corollary \ref{co:hilbert transform} we have
	$$
	\h=\R(I-\overline{\lambda}T')=\Dom H_{\overline{\lambda }T},
	$$
	and so $(\log n)M_n(\overline{\lambda }T)x \to 0$ for every $x\in \h$.
	According to the uniform boundedness principle, $(\log n)M_n(\overline{\lambda }T)$ is
bounded in norm, and so $\|M_n(\overline{\lambda }T)\|$ tends to $0$ when $n$ tends to infinity.
	But this implies that $I-\overline{\lambda}T$ is invertible, hence $\lambda \notin \sigma(T)$.
Thus, $\sigma (T) \cap \T \subset \sigma(T')\cap \T$. The opposite inclusion follows from Lemma~\ref{le:AST theorem 1}.

	By Theorem \ref{te39} we have $\n(\lambda I-T)=\n(\lambda I-T')$ for each $\lambda \in \T$,
which means $\sigma _p(T) \cap \T =\sigma _p (T') \cap \T$. 	
\end{proof}

\begin{corollary}\label{co:uniformly ergodic}
	Let $T,T'$ be contractions on $\h$ such that $T \Prec T'$. Then $T$ is uniformly Ces\`aro
ergodic if and only if $T'$ is uniformly Ces\`aro ergodic. Also,
$\{T^n\}$ uniformly converges if and only if $\{T'^n\}$ uniformly converges.
\end{corollary}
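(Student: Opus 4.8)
The plan is to derive everything from the two ergodic facts already established: the equality of the closures of the ranges $\overline{\R(I-T)}=\overline{\R(I-T')}$ (Theorem~\ref{te39}(i)), and the sharper equality of the ranges themselves $\R(I-T)=\R(I-T')$ in the Harnack-equivalent case (Corollary~\ref{co:hilbert transform}). Recall that a contraction $S$ is uniformly Ces\`aro ergodic exactly when $\R(I-S)$ is closed, by the cited result of~\cite{L}.

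For the first assertion, suppose $T\Prec T'$ and $T$ is uniformly Ces\`aro ergodic, i.e. $\R(I-T)$ is closed. By Corollary~\ref{co:hilbert transform}, $\R(I-T')\subset\R(I-T)=\overline{\R(I-T)}=\overline{\R(I-T')}$, the last equality being Theorem~\ref{te39}(i); hence $\R(I-T')$ is closed and $T'$ is uniformly Ces\`aro ergodic. Conversely, suppose $T'$ is uniformly Ces\`aro ergodic, so $\R(I-T')$ is closed. Again by Corollary~\ref{co:hilbert transform}, $\R(T-T')\subset\R(I-T)$, hence $\R(I-T)=\R(T-T')+\R(I-T')\subset\R(I-T)$ forces $\R(I-T)=\R(T-T')+\R(I-T')$; but by Theorem~\ref{te39}(i) this set is contained in $\overline{\R(I-T')}=\R(I-T')$, so in fact $\R(I-T)=\R(I-T')$, which is closed. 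Thus $T$ is uniformly Ces\`aro ergodic. (Alternatively, one can invoke Corollary~\ref{co:hilbert transform} only in the equivalent case by first reducing: if $T\Prec T'$ then the inclusion $\R(I-T')\subseteq\R(I-T)$ combined with $\overline{\R(I-T)}=\overline{\R(I-T')}$ already gives that closedness of one range is equivalent to closedness of the other.)

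For the second assertion, I would pass to the decomposition~\eqref{eq:decompositions I oplus T_1}: $T=I\oplus T_1$, $T'=I\oplus T_1'$ on $\h=\n(I-T)\oplus\overline{\R(I-T)}$, with $T_1\Prec T_1'$ by Theorem~\ref{te39}(ii). By Lemma~\ref{le:convergence to 0}, uniform convergence of $\{T^n\}$ is equivalent to uniform convergence of $\{T_1^n\}$ to $0$, and likewise for $T'$ and $T_1'$; so it suffices to show $T_1^n\to 0$ uniformly iff $T_1'{}^n\to 0$ uniformly. Now $T_1$ and $T_1'$ are completely non-unitary-on-$\n(I-\cdot)$ in the sense that $\n(I-T_1)=\{0\}=\n(I-T_1')$. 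Uniform convergence of the iterates of a contraction $S$ to $0$ is equivalent to the condition $1\notin\sigma(S)$ together with $r(S)<1$ when restricted appropriately; more precisely, for a contraction $S$ with $\n(I-S)=\{0\}$, $S^n\to 0$ in norm iff $1\notin\sigma(S)$, which here means $1\notin\sigma(T_1)$. But $\sigma(T_1)\cap\T=\sigma(T_1')\cap\T$ by applying Theorem~\ref{pr812} to the pair $T_1\Prec T_1'$: indeed $1\notin\sigma(T_1)$ iff $1\notin\sigma(T_1')$. The remaining possibility, that the spectrum touches $\T$ at a point $\lambda\ne 1$, is handled identically since $\sigma(T_1)\cap\T=\sigma(T_1')\cap\T$ and uniform convergence of iterates to $0$ is equivalent to $\sigma(S)\cap\T\subseteq\{1\}$ being empty, i.e. $\sigma(S)\cap\T=\emptyset$ (combined with $\n(I-S)=\{0\}$). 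Hence $\{T_1^n\}$ converges uniformly to $0$ iff $\sigma(T_1)\cap\T=\emptyset$ iff $\sigma(T_1')\cap\T=\emptyset$ iff $\{T_1'{}^n\}$ converges uniformly to $0$.

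The main obstacle is the precise characterization of uniform convergence of $\{S^n\}$ for a Hilbert-space contraction $S$. One must be careful: the clean statement "$S^n\to 0$ in norm iff $\sigma(S)\subset\D$" is false in general for operators, but for a \emph{contraction} it is true that $\|S^n\|\to 0$ iff $\sigma(S)\subset\D$ iff $r(S)<1$ — this uses that a contraction with spectrum in the open disc is power-bounded with spectral radius $<1$, and the spectral radius formula $r(S)=\lim\|S^n\|^{1/n}$ forces $\|S^n\|\to 0$; the converse is immediate. Combining this with Lemma~\ref{le:convergence to 0} (which removes the unavoidable fixed part on $\n(I-S)$) and with Theorem~\ref{pr812} applied to $T_1\Prec T_1'$ (giving $\sigma(T_1)\cap\T=\sigma(T_1')\cap\T$, and since $T_1,T_1'$ are contractions with spectrum in $\overline{\D}$ and no eigenvalue $1$, the condition $\sigma\cap\T=\emptyset$ is equivalent to $\sigma\subset\D$), the equivalence follows. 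The only delicate point to verify carefully is that the decomposition $\h=\n(I-T)\oplus\overline{\R(I-T)}$ is the \emph{same} subspace decomposition for $T$ and for $T'$, which is exactly the content of Theorem~\ref{te39}(i)--(ii) and is what makes Lemma~\ref{le:convergence to 0} applicable simultaneously to both.
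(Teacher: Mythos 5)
Your second assertion (uniform convergence of the iterates) is handled correctly and essentially as in the paper: reduce via Lemma~\ref{le:convergence to 0} to $\|T_1^n\|\to 0$, note that for a contraction this is equivalent to $\sigma(T_1)\subset\D$, and apply Theorem~\ref{pr812} to $T_1\Prec T_1'$. Your converse direction of the first assertion (if $\R(I-T')$ is closed then so is $\R(I-T)$), obtained from Corollary~\ref{co:hilbert transform} and the equality of closures, is also correct and is a pleasant alternative to the paper's argument for that particular implication.

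The forward direction of the first assertion, however, has a genuine gap. From ``$\R(I-T)$ is closed'' and Corollary~\ref{co:hilbert transform} you obtain $\R(I-T')\subset\R(I-T)=\overline{\R(I-T)}=\overline{\R(I-T')}$ and then conclude ``hence $\R(I-T')$ is closed.'' This does not follow: every subspace is contained in its closure, and $\R(I-T')$ could perfectly well be a dense, non-closed subspace of the closed space $\R(I-T)$. The same objection defeats your parenthetical ``alternative'': the inclusion $\R(I-T')\subset\R(I-T)$ together with equality of closures yields only one implication (if $\R(I-T')$ is closed it must equal $\R(I-T)$), not an equivalence; and since $T\Prec T'$ is not symmetric you cannot swap the roles of $T$ and $T'$ to get the missing direction from range inclusions alone. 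The paper sidesteps this by converting closedness of the range into a spectral statement: with the decomposition \eqref{eq:decompositions I oplus T_1} one has $\R(I-T)=\R(I-T_1)$, where $I-T_1$ is injective with dense range on $\overline{\R(I-T)}$, so $\R(I-T)$ is closed if and only if $I-T_1$ is invertible, i.e.\ $1\notin\sigma(T_1)$; likewise for $T'$; and Theorem~\ref{pr812} applied to $T_1\Prec T_1'$ gives the \emph{symmetric} conclusion $1\in\sigma(T_1)\Leftrightarrow 1\in\sigma(T_1')$. You should replace your forward direction by this argument --- it is exactly the mechanism you already use, correctly, for the second assertion.
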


\begin{proof} As noted above, $T$ is uniformly ergodic if and only if $\R(I-T)$ is closed.
	Using the decompositions~\eqref{eq:decompositions I oplus T_1} it is easy to see
that $\R(I-T)=\R(I-T_1)$, $\R(I-T')=\R(I-T'_1)$, $I-T_1$ and $I-T_1'$ are injective, and $T_1\Prec T_1'$.
Therefore $\R(I-T)$ is closed if and only if $I-T_1$ is invertible, and similarly for $T'$.
But from Theorem~\ref{pr812} it follows that $1\in\sigma(T_1)$ if and only
if $1\in\sigma(T'_1)$, which proves the statement.

For the second statement, it follows from Lemma~\ref{le:convergence to 0} that $\{T^n\}$ converges in $\B$ if and only if  $\|T^n_1\| \to 0$, and the last assertion is equivalent to $\sigma(T_1) \subset \D$. The same being true about $T'$, the proof is finished by applying Theorem~\ref{pr812}.
\end{proof}

Another consequence of Theorem \ref{pr812} is related to the Katznelson-Tzafriri theorem \cite{KT} which implies that
for a contraction $T\in\B$ we have $\sigma(T) \subset \D \cup \{1\}$ if and only if $\|T^n(T-I)\| \to 0$ as $n\to \infty$.
So we obtain the following
\begin{corollary}\label{co47}
Let $T,T'$ be contractions on $\h$ such that $T \Prec T'$. Then $\|T^n(T-I)\|\to 0$ if and only if $\|T'^n(T'-I)\|\to 0$.
\end{corollary}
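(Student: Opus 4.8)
The plan is to combine the Katznelson--Tzafriri characterization quoted just above the statement with the equality of peripheral spectra from Theorem~\ref{pr812}. By the Katznelson--Tzafriri theorem, for the contraction $T$ the condition $\|T^n(T-I)\|\to 0$ is equivalent to $\sigma(T)\subset\D\cup\{1\}$, which, since $\sigma(T)$ always lies in the closed unit disk, amounts to saying $\sigma(T)\cap\T\subset\{1\}$. The same equivalence applies to $T'$: $\|T'^n(T'-I)\|\to 0$ holds if and only if $\sigma(T')\cap\T\subset\{1\}$.

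It then suffices to observe that the two peripheral conditions coincide. This is exactly Theorem~\ref{pr812}, which asserts that $T\Prec T'$ implies $\sigma(T)\cap\T=\sigma(T')\cap\T$; in particular $\sigma(T)\cap\T\subset\{1\}$ precisely when $\sigma(T')\cap\T\subset\{1\}$. Chaining these three equivalences gives $\|T^n(T-I)\|\to 0 \iff \sigma(T)\cap\T\subset\{1\} \iff \sigma(T')\cap\T\subset\{1\} \iff \|T'^n(T'-I)\|\to 0$, which is the claimed statement. There is no real obstacle here: the only mild point is to make sure that the hypotheses of the Katznelson--Tzafriri theorem are met, namely that both $T$ and $T'$ are genuine contractions on Hilbert space, which is part of the standing assumptions, and that we are using the open-disk version $\D\cup\{1\}$ consistently so that it translates cleanly into a statement about $\sigma(\cdot)\cap\T$.
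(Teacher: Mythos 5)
Your proof is correct and follows exactly the route the paper intends: the corollary is presented as an immediate consequence of the Katznelson--Tzafriri equivalence $\|T^n(T-I)\|\to 0 \iff \sigma(T)\subset\D\cup\{1\}$ combined with the equality of peripheral spectra from Theorem~\ref{pr812}. Nothing is missing.
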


 \section{ Harnack domination and various classes of contractions}
 \medskip

In this section we intend to show that certain clases of contractions are preserved by Harnack domination.
This will be used, in particular, to give an alternate proof of Corollary~\ref{cor:alpha-max}.
The main tool used is the asymptotic limit of contractions.

\begin{lemma}\label{pr11}
Let $T$ and $T'$ be two contractions on $\h$ such that $T$ is
Harnack dominated by $T'$. The following statements hold :

\begin{itemize}
	\item[(i)]
	There exists a constant $c\ge 1$ such that
	\begin{align}\label{ec14}
	\frac{1}{4} \mid \langle (S_T-S_{T'})h,h \rangle \mid
	^2+\|(I-S_T)^{1/2}h\|^2 \le c^2 \|(I-S_{T'})^{1/2}h\|^2,
	\end{align}
	for all $h\in \h$.
	\item[(ii)]
	 We have $\n(I-S_{T'})\subset \n(I-S_T)$ and $T=T'$ on $\n(I-S_{T'})$.
	 \item[(iii)]  $S_T^{1/2}$ is Z-dominated by $S_{T'}^{1/2}$.
	 \item[(iv)] If, moreover,
	 $S_T^{1/2}$ and $S_{T'}^{1/2}$ are Z-equivalent then they
	 are Harnack equivalent and
	 $\n(I-S_T)=\n(I-S_{T'})$.
\end{itemize}

 \end{lemma}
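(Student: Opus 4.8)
The plan is to transfer the domination through all the powers $T^n,T'^n$ and their adjoints and then pass to the asymptotic limits. Write $\gamma$ for a Harnack constant of $T\Prec T'$. By Lemma~\ref{le:simple properties of Harnack domination} (i) and (iv), $T^n\Prec T'^n$ and $T^{*n}\Prec T'^{*n}$ for every $n\ge 1$, each with the same constant $\gamma$ (for (i), note that $z\mapsto p(z^n)$ has nonnegative real part on $\overline{\D}$ whenever $p$ does). Hence $T^n\PrecZ T'^n$ and $T^{*n}\PrecZ T'^{*n}$, and Theorem~\ref{thm:1.2} produces a constant $c'\ge 1$, depending only on $\gamma$ and thus uniform in $n$, such that for all $h\in\h$ and all $n$
\[
\|D_{T^n}h\|\le c'\|D_{T'^n}h\|,\quad \|(T^n-T'^n)h\|\le c'\|D_{T'^n}h\|,\quad \|(T^{*n}-T'^{*n})h\|\le c'\|D_{T'^{*n}}h\|.
\]

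Since $D_{T^n}^2=I-T^{*n}T^n\to I-S_T$ strongly (and likewise for $T'$), we have $\|D_{T^n}h\|^2\to\|(I-S_T)^{1/2}h\|^2$ and $\|D_{T'^n}h\|^2\to\|(I-S_{T'})^{1/2}h\|^2$; letting $n\to\infty$ in the first inequality above yields $\|(I-S_T)^{1/2}h\|^2\le c'^2\|(I-S_{T'})^{1/2}h\|^2$. For the remaining term in (i) I would use the splitting $T^{*n}T^n-T'^{*n}T'^n=T^{*n}(T^n-T'^n)+(T^{*n}-T'^{*n})T'^n$. Applied to $h$, the first summand has norm at most $c'\|D_{T'^n}h\|$, and the second at most $c'\|D_{T'^{*n}}(T'^nh)\|$; here $\|D_{T'^{*n}}(T'^nh)\|^2=\|T'^nh\|^2-\|T'^{*n}T'^nh\|^2\to\langle S_{T'}h,h\rangle-\|S_{T'}h\|^2=\langle S_{T'}(I-S_{T'})h,h\rangle\le\|(I-S_{T'})^{1/2}h\|^2$, using $0\le S_{T'}(I-S_{T'})\le I-S_{T'}$. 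As $T^{*n}T^nh-T'^{*n}T'^nh\to(S_T-S_{T'})h$, passing to the limit gives $\|(S_T-S_{T'})h\|\le 2c'\|(I-S_{T'})^{1/2}h\|$. Combining the two estimates gives (i) with a constant depending only on $c'$.

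Parts (ii)--(iv) then follow quickly. From $\|(I-S_T)^{1/2}h\|^2\le c'^2\|(I-S_{T'})^{1/2}h\|^2$, the kernel of the positive operator $I-S_{T'}$ is contained in that of $I-S_T$, i.e. $\n(I-S_{T'})\subseteq\n(I-S_T)$; and if $h\in\n(I-S_{T'})$ then the nonincreasing sequence $\|T'^nh\|$ tends to $\langle S_{T'}h,h\rangle^{1/2}=\|h\|$ while staying $\le\|h\|$, so it is constant, whence $D_{T'}h=0$ and the second inequality (with $n=1$) forces $Th=T'h$; this is (ii). For (iii), apply Lemma~\ref{le:KSS} (i) with $A=S_T^{1/2}$, $A'=S_{T'}^{1/2}$: $S_T^{1/2}\PrecZ S_{T'}^{1/2}$ is equivalent to $I-S_T\le c(I-S_{T'})$ for some $c$, which is exactly the estimate just obtained. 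Finally, if $S_T^{1/2}$ and $S_{T'}^{1/2}$ are Z-equivalent, then Lemma~\ref{le:KSS} (i) applied in both directions gives $I-S_T\le c(I-S_{T'})$ and $I-S_{T'}\le c(I-S_T)$, so the two quadratic forms are equivalent and $\n(I-S_T)=\n(I-S_{T'})$, while Lemma~\ref{le:KSS} (ii) yields the Harnack equivalence; this is (iv).

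I expect the main obstacle to be the difference estimate in (i): controlling $(T^{*n}-T'^{*n})T'^nh$ uniformly in $n$ and identifying its limit rests on the identity $\|D_{T'^{*n}}g\|^2=\|g\|^2-\|T'^{*n}g\|^2$, the strong convergence $T'^{*n}T'^nh\to S_{T'}h$, and the operator inequality $S_{T'}(I-S_{T'})\le I-S_{T'}$. A secondary technical point is to ensure that the constant $c'$ supplied by Theorem~\ref{thm:1.2} can be chosen independently of $n$, which holds because it depends only on the domination constant, the same for all the powers by Lemma~\ref{le:simple properties of Harnack domination}.
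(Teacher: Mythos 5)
Your proof is correct, and its skeleton is the one the paper uses: pass from $T\Prec T'$ to $T^n\Prec T'^n$ with a uniform constant, convert to the Z-domination inequalities of Theorem~\ref{thm:1.2} for each power, and let $n\to\infty$ to reach the asymptotic limits; parts (ii)--(iv) then follow from Lemma~\ref{le:KSS} exactly as you say. The one genuinely different step is the control of $\langle (S_T-S_{T'})h,h\rangle$ in (i). The paper gets it from the single inequality $\|(T^n-T'^n)h\|\le c\|D_{T'^n}h\|$ via the reverse triangle inequality, which in the limit bounds $\bigl|\,\|S_T^{1/2}h\|-\|S_{T'}^{1/2}h\|\,\bigr|$, and then uses the factorization $a^2-b^2=(a-b)(a+b)$ with $a+b\le 2$ for unit vectors. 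You instead split $T^{*n}T^n-T'^{*n}T'^n=T^{*n}(T^n-T'^n)+(T^{*n}-T'^{*n})T'^n$ and invoke the adjoint domination $T^{*n}\PrecZ T'^{*n}$ together with the identity $\|D_{T'^{*n}}T'^nh\|^2=\|T'^nh\|^2-\|T'^{*n}T'^nh\|^2$ and the inequality $S_{T'}(I-S_{T'})\le I-S_{T'}$. Your route needs the extra ingredient of Lemma~\ref{le:simple properties of Harnack domination}~(iv), but it buys the stronger, homogeneous conclusion $\|(S_T-S_{T'})h\|\le 2c'\|(I-S_{T'})^{1/2}h\|$ (a norm bound rather than a quadratic-form bound); when you then fold this into the literal form of~\eqref{ec14} you pick up a factor $\|h\|^2$, so, exactly as in the paper's own derivation, the displayed inequality is really obtained for $\|h\|\le1$ --- this is a normalization wrinkle inherent in the non-homogeneous statement, not a defect of your argument. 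In (iv) you also get $\n(I-S_T)=\n(I-S_{T'})$ directly from the two-sided inequality $I-S_T\le c(I-S_{T'})$ and its reverse, whereas the paper passes through Harnack equivalence of the squares $S_T$, $S_{T'}$; both are valid.
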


 \begin{proof}
Suppose that $T \Prec T'$ with constant $c\ge 1$. Then $T^n \Prec
T^{'n}$ with constant $c$ and, in particular, $T^n\PrecZ T^{'n}$
with the same constant $c$, for every $n\ge1$. This implies by
Theorem~\ref{thm:1.2} that for $h\in \h$,
$$
\|(T^n-T^{'n})h\|^2 +\|D_{T^n}h\|^2\le c^2 \|D_{T^{'n}h}\|^2.
$$
Therefore
$$
\mid \|T^nh\|-\|T^{'n}h\| \mid ^2+\langle (I-T^{*n}T^n)h,h \rangle
\le c^2 \langle (I-T^{'*n}T^{'n})h,h\rangle,
$$
and letting $n\to \infty$ we get

\begin{equation}\label{eq:my4.1}
	\mid \|S_T^{1/2}h\|-\|S_{T'}^{1/2}h\| \mid ^2 +
	\|(I-S_T)^{1/2}h\|^2\le c^2 \|(I-S_{T'})^{1/2}h\|^2.
\end{equation}

Now, if $\|h\|=1$ we have
$$
\mid \langle (S_T-S_{T'})h,h\rangle \mid =\mid
\|S_T^{1/2}h\|^2-\|S_{T'}^{1/2}h\|^2 \mid \le 2 \mid
\|S_T^{1/2}h\|-\|S_{T'}^{1/2}h\|\mid ,
$$
which, together with~\eqref{eq:my4.1}, yields~\eqref{ec14}.

From \eqref{ec14} it follows immediately that $\n(I-S_{T'})\subset
\n(I-S_T)$.  Since $\n(I-S_{T'}) \subset \n(D_{T'}) \subset
\n(T-T')$ (the last inclusion follows from Theorem~\ref{thm:1.2}), we conclude that $T=T'$ on $\n(I-S_{T'})$.

Inequality (\ref{ec14}) also implies $\|D_{S_T^{1/2}}h\| \le
c \| D_{S_{T'}^{1/2}}h\|$ for $h\in \h$. Since $S_T$, $S_{T'}$
are positive contractions, Lemma~\ref{le:KSS} (i) implies that $S_T^{1/2}\PrecZ S_{T'}^{1/2}$.

If $S_T^{1/2}$ and $S_{T'}^{1/2}$ are Z-equivalent, then Lemma~\ref{le:KSS} (ii) implies that they are Harnack equivalent.
Therefore their squares $S_T$ and $S_{T'}$ are Harnack
equivalent, and so $\n(I-S_T)=\n(I-S_{T'})$.
 \end{proof}

 \begin{corollary}\label{co12}
If $T$ and $T'$ are Harnack equivalent contractions on $\h$ then
$S_T^{1/2}$ and $S_{T'}^{1/2}$, as well as $S_T$ and $S_{T'}$, are
Harnack equivalent. In this case one has $\n(I-S_T)=\n(I-S_{T'})$
and $\n(I-S_{T^*})=\n(I-S_{T^{'*}})$.
 \end{corollary}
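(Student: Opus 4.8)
The plan is to deduce the corollary directly from Lemma~\ref{pr11}, applied in both directions, together with the elementary permanence properties of Harnack domination recorded in Lemma~\ref{le:simple properties of Harnack domination}.

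First I would unpack the hypothesis: that $T$ and $T'$ are Harnack equivalent means $T\Prec T'$ and $T'\Prec T$. Applying Lemma~\ref{pr11}(iii) to the first relation gives $S_T^{1/2}\PrecZ S_{T'}^{1/2}$, and applying the same statement to the second relation (with the roles of $T$ and $T'$ interchanged) gives $S_{T'}^{1/2}\PrecZ S_T^{1/2}$. Hence $S_T^{1/2}$ and $S_{T'}^{1/2}$ are Z-equivalent. Since $S_T$ and $S_{T'}$ are positive contractions, so are $S_T^{1/2}$ and $S_{T'}^{1/2}$, and therefore Lemma~\ref{pr11}(iv) (equivalently Lemma~\ref{le:KSS}(ii)) upgrades this Z-equivalence to Harnack equivalence of $S_T^{1/2}$ and $S_{T'}^{1/2}$, and at the same time yields $\n(I-S_T)=\n(I-S_{T'})$.

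To pass from the square roots to $S_T$ and $S_{T'}$ themselves, I would invoke Lemma~\ref{le:simple properties of Harnack domination}(i) with exponent $n=2$: from $S_T^{1/2}\Prec S_{T'}^{1/2}$ it follows that $S_T=(S_T^{1/2})^2\Prec (S_{T'}^{1/2})^2=S_{T'}$, and symmetrically $S_{T'}\Prec S_T$, so $S_T$ and $S_{T'}$ are Harnack equivalent. Finally, for the statement about the adjoints, I would use Lemma~\ref{le:simple properties of Harnack domination}(iv): from $T\Prec T'$ and $T'\Prec T$ we get $T^*\Prec T^{'*}$ and $T^{'*}\Prec T^*$, so $T^*$ and $T^{'*}$ are Harnack equivalent as well; re-running the previous two paragraphs with $T^*,T^{'*}$ in place of $T,T'$ gives $\n(I-S_{T^*})=\n(I-S_{T^{'*}})$.

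I do not expect a genuine obstacle here — the corollary is a direct consequence of Lemma~\ref{pr11} made symmetric. The only points that require a moment's care are that the ``squaring'' permanence property in Lemma~\ref{le:simple properties of Harnack domination}(i) is formulated for exponents $n\ge 2$, which is precisely the case $n=2$ we use, and that $S_T^{1/2}$ is itself a (positive) contraction, so that the $\PrecZ$ and $\Prec$ machinery, and in particular Lemma~\ref{pr11}(iv) and Lemma~\ref{le:KSS}, genuinely apply to the pair $S_T^{1/2},S_{T'}^{1/2}$.
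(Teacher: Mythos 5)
Your proposal is correct and follows exactly the route the paper intends: the corollary is stated without a separate proof because it is immediate from Lemma~\ref{pr11}(iii)--(iv) applied symmetrically (the proof of part (iv) already contains the passage from the square roots to $S_T$, $S_{T'}$ themselves), together with Lemma~\ref{le:simple properties of Harnack domination}(iv) for the adjoints. No gaps.
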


\begin{remark}\label{re13}
If $T\Prec T'$ but they are not Harnack equivalent, then
$\n(I-S_{T'}) \subsetneqq \n(I-S_T)$, in general. An example may be obtained by taking
$T$ to be an absolutely continuous unitary and $T'$ a nonisometric contraction that
dominates $T$, as in the proof of Theorem~\ref{th:abs cont unitaries are dominated}.
\end{remark}

\begin{lemma}\label{le:domination and the unitary part}
	Let $T$ and $T'$ be two contractions on $\h$ such that $T$ is
	Harnack dominated by $T'$. If $\h_u$, $\h_u'$ are the maximum subspaces of $\h$ which
	reduce $T, T'$ to unitary operators, respectively, then
	$\h_u'\subset \h_u$, $\h_u'$ reduces $T$ and $T|\h_u'=T'|\h_u'$, while  $T|_{\h \ominus \h_u'}$
	is Harnack dominated by $T'|_{\h\ominus \h_u'}$.
\end{lemma}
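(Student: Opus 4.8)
The plan is to build everything on top of Lemma~\ref{pr11} applied to both $T\Prec T'$ and its adjoint $T^*\Prec T'^*$. Recall that the maximal reducing subspace on which a contraction $T$ is unitary is exactly $\h_u=\n(I-S_T)\cap\n(I-S_{T^*})$. By Lemma~\ref{pr11}(ii), $T\Prec T'$ gives $\n(I-S_{T'})\subset\n(I-S_T)$ with $T=T'$ on $\n(I-S_{T'})$, and applying Lemma~\ref{pr11}(ii) to the adjoints (which are Harnack dominated by Lemma~\ref{le:simple properties of Harnack domination}(iv)) gives $\n(I-S_{T'^*})\subset\n(I-S_{T^*})$ with $T^*=T'^*$ on $\n(I-S_{T'^*})$. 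Intersecting, $\h_u'=\n(I-S_{T'})\cap\n(I-S_{T'^*})\subset\n(I-S_T)\cap\n(I-S_{T^*})=\h_u$, and on $\h_u'$ we have both $T=T'$ and $T^*=T'^*$.

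Next I would check that $\h_u'$ reduces $T$. It certainly reduces $T'$ (by definition of $\h_u'$), so $T'\h_u'\subseteq\h_u'$ and $T'^*\h_u'\subseteq\h_u'$; but on $\h_u'$ we have just shown $T=T'$ and $T^*=T'^*$, so $T\h_u'=T'\h_u'\subseteq\h_u'$ and $T^*\h_u'=T'^*\h_u'\subseteq\h_u'$, i.e. $\h_u'$ reduces $T$, and $T|\h_u'=T'|\h_u'$ is unitary on $\h_u'$. Since $\h_u'$ reduces both $T$ and $T'$, so does its orthogonal complement $\h\ominus\h_u'$, and we may decompose $T=(T|\h_u')\oplus(T|_{\h\ominus\h_u'})$ and similarly for $T'$ with respect to $\h=\h_u'\oplus(\h\ominus\h_u')$.

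Finally, to get $T|_{\h\ominus\h_u'}\Prec T'|_{\h\ominus\h_u'}$, I would invoke Lemma~\ref{le:simple properties of Harnack domination}(ii): since $T\Prec T'$ and $T=(T|\h_u')\oplus(T|_{\h\ominus\h_u'})$, $T'=(T'|\h_u')\oplus(T'|_{\h\ominus\h_u'})$ are compatible orthogonal decompositions, part (ii) of that lemma gives the Harnack domination of each summand separately, in particular on $\h\ominus\h_u'$. (Alternatively Lemma~\ref{le:simple properties of Harnack domination}(iii) applies directly since $\h\ominus\h_u'$ is invariant under both $T$ and $T'$.) I do not expect any real obstacle here; the only point requiring a moment's care is the identification $\h_u=\n(I-S_T)\cap\n(I-S_{T^*})$, which is already recorded in Section~\ref{Section:Notation}, and the observation that Lemma~\ref{pr11}(ii) must be applied twice, once to $T,T'$ and once to $T^*,T'^*$, to control both $S$ and $S^*$.
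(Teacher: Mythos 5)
Your proposal is correct and follows essentially the same route as the paper's proof: apply Lemma~\ref{pr11}(ii) to both $T\Prec T'$ and $T^*\Prec T'^*$, intersect the kernels to get $\h_u'\subset\h_u$, use the coincidence $T=T'$ and $T^*=T'^*$ on $\h_u'$ to see that $\h_u'$ reduces $T$ unitarily, and then restrict to $\h\ominus\h_u'$. The only cosmetic difference is that the paper cites the positive-definiteness criterion~\eqref{ec13} for the final domination on $\h\ominus\h_u'$, whereas you invoke Lemma~\ref{le:simple properties of Harnack domination}(ii)/(iii); both are valid.
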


\begin{proof}
	Since
	$T\Prec T'$ and (by
Lemma~\ref{le:simple properties of Harnack domination}, (iv)) $T^* \Prec T^{'*}$,
we have by  Lemma~\ref{pr11} $\n(I-S_{T'}) \subset \n(I-S_T)$ and
	$\n(I-S_{T^{'*}}) \subset \n(I-S_{T^*})$. Therefore
	\[
	\h_u'=\n(I-S_{T'})\cap \n(I-S_{T^{'*}}) \subset \n(I-S_T)\cap \n(I-S_{T^*})=
	\h_u.
	\]
	In addition, as $T=T'$ on $\n(I-S_{T'})$ and $T^*=T^{'*}$ on
	$\n(I-S_{T^{'*}})$, it follows that $\h_u'$ reduces $T$ to a unitary
	operator. Hence $\h\ominus \h_u'$ also reduces $T$ and $T'$, while
	by (\ref{ec13}) we have $T|_{\h \ominus \h_u'}\Prec T'|_{\h \ominus
		\h_u'}$.
\end{proof}

The next theorem gathers a series of results that show how different classes of contractions
behave with respect to Harnack domination.

\begin{theorem}\label{pr14}
Let $T$ and $T'$ be two contractions on $\h$ such that $T$ is
Harnack dominated by $T'$.
\begin{itemize}
	\item[(i)] If $T$ is completely nonunitary then $T'$ is also completely nonunitary.
	\item[(ii)]
	$T$ is absolutely continuous if and only if $T'$ is absolutely continuous.
	\item[(iii)] $T$ belongs to the class $C_{0\cdot}$, $C_{\cdot 0}$, or
	$C_{00}$ if and only if  $T'$ belongs to the same class, respectively.
	\item[(iv)] $T^n$ is strongly convergent if and only if $T'{}^n$ is strongly convergent.
	\item[(v)]
	$T$ is weakly stable if and only if  $T'$ is weakly stable.
    \item[(vi)] $T^n$ is weakly convergent if and only if $T^{'n}$ is weakly convergent.
\end{itemize}

\end{theorem}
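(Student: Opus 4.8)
The plan is to treat the six items by reducing each one to the structure already established, using the key decompositions from Lemma~\ref{pr11} and Lemma~\ref{le:domination and the unitary part}. For item (i), the point is that $T$ completely nonunitary means $\h_u = \{0\}$; since Lemma~\ref{le:domination and the unitary part} gives $\h_u' \subset \h_u$, we immediately get $\h_u' = \{0\}$, i.e.\ $T'$ is completely nonunitary. For item (iii), I would argue directly from the inequality~\eqref{ec14} of Lemma~\ref{pr11}(i): if $T\in C_{0\cdot}$ then $S_T = 0$, so $I - S_T = I$ and the left-hand side of~\eqref{ec14} dominates $\|h\|^2$, forcing $\|(I-S_{T'})^{1/2}h\|^2 \ge c^{-2}\|h\|^2$ for all $h$, whence $I - S_{T'}$ is bounded below, so $S_{T'} \le (1 - c^{-2})I$; but a positive contraction that is an asymptotic limit satisfies $S_{T'} = S_{T'}^2 \le \dots$ — more simply, $\n(I - S_{T'})$ being the maximal subspace on which $T'$ is an isometry, and $S_{T'}$ being idempotent-like is not quite available, so instead I use: $S_{T'} \le (1-c^{-2})I < I$ together with the fact (recalled in Section~2) that $\|S_{T'}\| = 1$ whenever $S_{T'}\ne 0$; hence $S_{T'} = 0$, i.e.\ $T'\in C_{0\cdot}$. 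The converse direction ($T'\in C_{0\cdot} \Rightarrow T\in C_{0\cdot}$) follows from $S_{T'} = 0 \Rightarrow \n(I-S_{T'}) = \{0\}$ combined with Lemma~\ref{pr11}(ii) which gives $\n(I-S_{T'}) \subset \n(I-S_T)$ — wait, that is the wrong direction; instead apply~\eqref{ec14} again: the left side contains $\|(I-S_T)^{1/2}h\|^2$, and when $S_{T'}=0$ the right side is $c^2\|h\|^2$, giving no contradiction, so this needs the reverse Harnack inequality. The cleanest route for the converse is: $T'\in C_{0\cdot}$ means $T'$ is strongly stable; apply Lemma~\ref{le:simple properties of Harnack domination}(iv) to pass to adjoints only when needed, and for $C_{0\cdot}$ use instead that $T^n\Prec T'^n$ forces, via Theorem~\ref{thm:1.1}(iii) with a single vector, $\|T^nh\|^2 \le \langle K\text{-type bound}\rangle$; more directly, from $\|D_{T^n}h\|^2 \le c^2\|D_{T'^n}h\|^2$ (which we derived in the proof of Lemma~\ref{pr11}) one gets $1 - \|T^nh\|^2 \le c^2(1 - \|T'^nh\|^2) \to 0$ since $\|T'^nh\|\to \|h\|$ would require $T'\in C_{1\cdot}$ — so actually this inequality goes the wrong way too. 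I will therefore handle the converse by the symmetric statement: swapping $T$ and $T'$ is not allowed, so instead I invoke that $C_{0\cdot}$ for $T'$ gives $S_{T'} = 0$, and then~\eqref{eq:my4.1} reads $\|S_T^{1/2}h\|^2 + \|(I-S_T)^{1/2}h\|^2 \le c^2\|h\|^2$ which is harmless; the correct tool is Lemma~\ref{pr11}(iii): $S_T^{1/2} \PrecZ S_{T'}^{1/2} = 0$, and by Corollary~\ref{co:simple z dominations}(i) Z-domination by $0$ (an isometry on $\{0\}$ trivially, or directly $S_T^{1/2}\PrecZ 0$ forces $S_T^{1/2} = 0$). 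So $T\in C_{0\cdot}$. The $C_{\cdot 0}$ case follows by applying the $C_{0\cdot}$ case to $T^*\Prec T'^*$ (Lemma~\ref{le:simple properties of Harnack domination}(iv)), and $C_{00}$ is the conjunction.

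For items (iv), (v), (vi) — strong convergence of $\{T^n\}$, weak stability, and weak convergence of $\{T^n\}$ — the unifying idea is to use the decomposition $\h = \h_u' \oplus (\h\ominus\h_u')$ of Lemma~\ref{le:domination and the unitary part}, on which $T = (T'|\h_u') \oplus T_0$ with $T|\h_u' = T'|\h_u'$ a common unitary part and $T_0 \Prec T_0'$ a Harnack domination of the completely-nonunitary parts. On the unitary part $T$ and $T'$ agree, so convergence there is trivially equivalent. On $\h\ominus\h_u'$, the contraction $T_0$ is completely nonunitary, hence (by the classical Sz.-Nagy--Foia\c{s} theory recalled in~\cite{K}) any weak limit point of $\{T_0^n\}$ is $0$, so weak convergence there means weak stability, and similarly for $T_0'$. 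Thus items (v) and (vi) reduce to: $T_0$ weakly stable $\iff$ $T_0'$ weakly stable. For this I would invoke Theorem~\ref{te39}(ii): with the further decomposition $\h = \n(I - T)\oplus\overline{\R(I-T)} = \n(I-T')\oplus\overline{\R(I-T')}$ (the kernels agree by Theorem~\ref{te39}(i)) one has $T = I\oplus T_1$, $T' = I\oplus T_1'$ with $T_1\Prec T_1'$ — but this only splits off the $\n(I-T)$ part, not all of weak stability. The real mechanism for weak stability must come from Theorem~\ref{pr812}: weak stability of a completely nonunitary contraction $T_0$ is equivalent to $\sigma_p(T_0^*)\cap\T = \emptyset$ together with the absence of a singular-unitary-like obstruction — more precisely, for a c.n.u.\ contraction $T_0$ of class $C_{00}$ one automatically has strong (hence weak) stability, while for general c.n.u.\ $T_0$ weak stability is governed by the peripheral point spectrum and the absolutely continuous part, both of which are preserved by Theorem~\ref{pr812} and Theorem~\ref{pr14}(ii),(iii). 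I expect item (v) to be deduced from the following chain: $T_0$ weakly stable $\iff$ $T_0$ has no eigenvalue on $\T$ for $T_0$ or $T_0^*$ \emph{and} the unitary part of the minimal unitary dilation restricted appropriately is absolutely continuous — arguing via the Foguel/Sz.-Nagy criterion — and each ingredient transfers by the results of Section~4.

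For item (iv), strong convergence of $\{T^n\}$: by Lemma~\ref{le:convergence to 0} this is equivalent to $\|T_1^n x\|\to 0$ for all $x$, where $T_1 = T|\overline{\R(I-T)}$, i.e.\ to $T_1\in C_{0\cdot}$ (strong stability), since on $\overline{\R(I-T)}$ there is no nonzero fixed vector — but strong stability is exactly $T_1\in C_{0\cdot}$, which by item (iii) (already proved) applied to $T_1\Prec T_1'$ is equivalent to $T_1'\in C_{0\cdot}$, hence to strong convergence of $\{T'^n\}$. This is the cleanest of the six. The main obstacle, as the above meandering indicates, is item (v) (weak stability): Harnack domination controls the asymptotic limit $S_T$ and the peripheral spectrum well, but weak stability of a c.n.u.\ contraction is a more delicate spectral condition than class-$C_{00}$ membership, and the argument has to carefully combine the preservation of absolute continuity (Theorem~\ref{pr14}(ii)), of the peripheral point spectrum (Theorem~\ref{pr812}), and of the $C_{\cdot 0}$/$C_{0\cdot}$ dichotomy to conclude that no weak-limit obstruction survives or is created; I would organize the proof so that (v) is derived last, leaning on (i), (ii), (iii), and the ergodic results of Section~4, and on the structure theory of c.n.u.\ contractions from~\cite{K}.
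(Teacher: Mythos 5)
Your item (i) matches the paper, your item (iv) (reduce to (iii) via Lemma~\ref{le:convergence to 0}) matches the paper, and your forward half of (iii) is a correct, self-contained variant: from \eqref{ec14} with $S_T=0$ you get $S_{T'}\le(1-c^{-2})I$, hence $S_{T'}=0$ because a nonzero asymptotic limit has norm one (the paper instead cites a result from \cite{AST} at this point). The rest has genuine gaps. The most concrete one is the converse of (iii). You correctly noticed that \eqref{ec14} and \eqref{eq:my4.1} become vacuous when $S_{T'}=0$, but your final fix --- ``$S_T^{1/2}\PrecZ S_{T'}^{1/2}=0$ forces $S_T^{1/2}=0$'' --- is false: since $D_0=I$, Theorem~\ref{thm:1.2}(ii) shows that $A\PrecZ 0$ holds for \emph{every} contraction $A$ (both conditions reduce to $\|\cdot\|\le c'\|h\|$), and Corollary~\ref{co:simple z dominations}(i) does not apply because $0$ is not an isometry on a nonzero space. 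All the asymptotic-limit inequalities of Lemma~\ref{pr11} point the wrong way for this implication, which is why the paper changes tools entirely: assuming $T'$ of class $C_{\cdot 0}$, the minimal isometric dilation satisfies $V'^{*n}\to 0$ strongly, and the intertwiner $A$ of Theorem~\ref{thm:1.1}(v), whose adjoint is a lift of $I_{\h}$, gives $T^{*n}h=P_{\h}V'^{*n}A^{*}h\to 0$; the $C_{0\cdot}$ case then follows by passing to adjoints. Some such dilation (or semi-spectral measure) argument is indispensable here.

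Two further items are not actually established. Item (ii) is never proved, although you invoke it as an ingredient for (v); the paper gets it from Lemma~\ref{le:domination and the unitary part} (a singular unitary part of $T'$ would be inherited by $T$) together with Theorem~\ref{thm:1.1}(vi), since $F_T\le c^2F_{T'}$ transfers absolute continuity from $T'$ down to $T$. For (v) you end with ``I expect item (v) to be deduced from the following chain\dots'' rather than a proof, and the characterization of weak stability through the peripheral point spectrum that you gesture at is not correct. The salvageable core of your idea is this: on $\h\ominus\h_u'$ the contraction $T'$ is completely nonunitary, hence absolutely continuous, hence weakly stable by Riemann--Lebesgue, and by (ii) the same holds for $T$ there; on $\h_u'$ one has $T=T'$ and $\h_u'$ reduces both operators, so everything reduces to the common unitary part. (The paper's forward direction uses instead the Foguel decomposition of $T'$.) As written, (v) is not proved, and (vi), which you reduce to (v), inherits the gap.
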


\begin{proof} By Lemma~\ref{le:domination and the unitary part}, if $T$ is
completely nonunitary then $T'$ is completely nonunitary. If $T'$ is not absolutely continuous,
it should have a reducing subspace on which it is a singular unitary operator. But,
again by Lemma~\ref{le:domination and the unitary part}, $T$ would have the same
property (since it coincides with $T'$ on the space on which the latter is unitary).

  On the other hand, if $T'$ is absolutely continuous, then the $\B$-valued semi-spectral
measure of $T'$ is absolutely continuous with respect to Lebesgue measure. By Theorem~\ref{thm:1.1}
the same is true about the $\B$-valued semi-spectral measure of $T$, and thus  $T$ is absolutely continuous.
We have thus proved (i) and (ii).

 It is enough to prove (iii) for the case $C_{0\cdot}$ (we may consider adjoints in the other cases).
Assume first that $T$ is of class $C_{0\cdot}$, that is $S_T=0$. From Lemma~\ref{pr11} (iii)
it follows that $0\PrecZ S_{T'}^{1/2}$. By~\cite[Corollary 2]{AST}  we have $\|S_{T'}^{1/2}\| < 1$.
This forces $S_{T'}=0$, that is $T'$
is of class $C_{0\cdot}$.

Conversely, suppose $T'$ is of class $C_{\cdot 0}$, that is $T^{'*n} \to 0$ strongly on $\h$.
This means (see \cite{SFb}) that if $V'$ on $\ka '$ is the minimal isometric dilation of $T'$ then $V^{'*n}\to 0$ strongly on $\ka '$. If $V$ on $\ka$ is the minimal isometric dilation  of $T$, then $T \Prec T'$ implies that there exists  $A\in \mathcal{B}(\ka ',\ka)$ satisfying $AV'=VA$ such that $A$ is an extension of $I_{\h}$. Then $A^*$ is a lift of $I_{\h}$, that is $P_{\h}A^*k=P_{\h}k$, for each $k\in \ka$. Therefore, for any integer $n\ge 1$ and $h\in \h$ we have ($V^*$ being an extension of $T^*$)
$$
T^{*n}h=V^{*n}h=P_{\h}A^*V^{*n}h=P_{\h}V^{'*n}A^*h\to 0, \quad n\to \infty.
$$
Hence $T$ is of class $C_{\cdot 0}$. To close this converse part of (iii), let us remark that if $T'$ is of class $C_{0\cdot}$ then as $T^* \Prec T'^*$, we can apply the previous argument for $T^*$ and $T'^*$ to conclude that $T$ is also of class $C_{0\cdot}$.

Suppose now that $T$ is weakly stable. By the Foguel decomposition
of $T'$ (see \cite[7.2]{K}) we have $\h=\h_1'\oplus \h_0'$, where  $\h_1'$
reduces $T'$ to a unitary operator, and $T'|_{\h_0'}$ is weakly
stable, $\h_0'$ being the maximum subspace of $\h$ with this
property. So $\h_1'\subset \h_u'$, and by Lemma~\ref{le:domination and the unitary part}  we have
$T=T'$ on $\h_1'$, hence $\h_1'$ is invariant for $T$. Since $T$ is
weakly stable on $\h$, it follows that $T'$ is weakly stable on
$\h_1'$, therefore $\h_1'=\{0\}$. We conclude that $T'$ is weakly
stable on $\h=\h_0'$.

Conversely, if we suppose that $T'$ is weakly stable,
then its unitary part $T'|\h_u'$ is weakly stable,
and $T=T'$ on $\h'_u$ by Lemma~\ref{le:domination and the unitary part}. In addition, $T|_{\h \ominus \h_u'} \Prec T'|_{\h \ominus \h_u'}$ while the contraction in the right side is completely nonunitary. By the above statement $(ii)$ both these contractions are absolutely continuous, hence $T=T|_{\h_u'} \oplus T|_{\h \ominus \h_u'}$ is weakly stable.

Finally, (iv) follows from (iii) and (vi) follows from (v) by applying Lemma~\ref{le:convergence to 0}.
\end{proof}

\begin{remark}\label{re56}
	The implication in Theorem~\ref{pr14} (i) cannot be reversed;
indeed, we have seen in Theorem~\ref{th:abs cont unitaries are dominated}
that a unitary operator can be Harnack dominated by a completely nonunitary
contraction.
\end{remark}

\begin{remark}\label{re57}
The weak convergence mentioned in Theorem \ref{pr14} (vi) is
equivalent to the fact that the contraction $T$ has the \emph{Blum--Hanson property} \cite{BH},
which means that for every subsequence $\{k_n\}$ of positive integers
and each $h\in \h$ the sequence $\{\frac{1}{N}\sum_{n=1}^N T^{k_n}h\}$
converges in the norm topology (see for instance \cite{JK}). 
So (vi) above can be reformulated as: $T$ has the
Blum--Hanson property if and only if $T'$ has the same property. Note that for isometries induced by measure-preserving transformations, the Blum--Hanson property is equivalent to the strong mixing property of the transformation (see also~\cite{FEHN} for other related results).
\end{remark}

\begin{remark}\label{re58}
	A consequence is the following  alternate proof of Corollary~\ref{cor:alpha-max}.
	Suppose $U$ is a singular unitary operator, $T$ is a contraction on $\h$,
	and $U\Prec T$.
	If we denote by $\h_u$ the maximal space that reduces $T$ to a unitary,
then by Lemma~\ref{le:domination and the unitary part}  we have $U_1:=
	U|_{\h\ominus \h_u} \Prec T|_{\h\ominus \h_u}=:T_1$.
	By Theorem~\ref{pr14} $U_1$ is absolutely continuous, which implies $\h\ominus \h_u=\{0\}$.
Therefore $T$ is unitary, whence $U=T$.
\end{remark}

\section{Examples and counterexamples}\label{sect:6}

We give in this section several examples showing the usefulness of the resolvent
estimate and the existence of some spectral and structural properties
which are not preserved by Harnack domination.

\begin{example}
In this example $S$ denotes the shift operator of multiplicity $\dim \e$: for
$x = (x_0,x_1, x_2, \cdots ) \in \ell^2_{\N}(\e)$ we set
$$ S(x_0,x_1, x_2, \cdots ) = (0, x_0, x_1, x_2, ...). $$
Let $A \in\mathcal B(\e)$ and consider the operator $T'$ defined on $\ell^2_{\N}(\e)$ by
$$ T' (x_0,x_1, x_2, \cdots ) = (0, Ax_0, x_1, x_2, ...). $$
Then, $S$ is an isometry with resolvent given by
$$ (I-\lambda S)^{-1}(x_0,x_1, \cdots) =
(x_0, x_1+\lambda x_0, \cdots, x_n+\lambda x_{n-1}+\cdots \lambda^{n-1}x_1+\lambda^nx_0,\cdots)$$
for any $x \in \ell^2_{\N}(\e)$ and any $\lambda\in\D$. We have
$$(S-T')x = (0,(I-A)x_0,0,0,\cdots), \quad \|x\|^2 - \|T'x\|^2 = \|x_0\|^2 - \|Ax_0\|^2$$
and
$$ (I-\lambda S)^{-1}(S-T')x = (0,(I-A)x_0,\lambda (I-A)x_0,\lambda^2(I-A)x_0,\cdots) .$$
Therefore the resolvent condition of Theorem~\ref{thm:res} implies that $S$ is
Harnack dominated by $T'$ if and only if $A$ is
a \emph{Halperin contraction}, that is $A$
verifies the following condition
\begin{equation} \label{eq:halperin}
\textrm{there is } K \ge 0 \textrm{ such that }
\|x_0-Ax_0\|^2 \le K(\|x_0\|^2 - \|Ax_0\|^2) \quad (x_0\in \e).
 \end{equation}
This condition was introduced by I. Halperin in \cite{H}; we refer the reader to \cite{BL} and the
references therein for more information. In particular, a product of orthogonal projections satisfies (\ref{eq:halperin}).

In our context, one sees that~\eqref{eq:halperin} is equivalent to $I\PrecZ A$. In particular,
any strict contraction $A$ satisfies it, and this yields
 another proof of the fact that a shift operator (of arbitrary multiplicity)
is not a maximal element for the Harnack relation.

We remark that any contraction which is Z-equivalent to, or Z-dominates a Halperin contraction
also verifies~\eqref{eq:halperin}.
On the other hand, it is clear that an operator $T$ with $\|T\|=1$ and $\sigma(T) \subset \D$ cannot
be a Halperin contraction since $I-T$ is invertible while $D_T$ is not. The latter statement follows from
$\|T\|=1$. But $T \Prec 0$ (see \cite{AST}, \cite{Sn}), hence a Halperin contraction
can Harnack dominates a contraction which does not necessarily satisfy~\eqref{eq:halperin}.
However, by Corollary~\ref{co47}, a contraction which Harnack dominates a Halperin contraction
certainly satisfies the Katznelson-Tzafriri
condition $\sigma(T) \subset \D \cup \{1\}$.
Indeed, this spectral condition is satisfied by any
Halperin contraction as was proved in \cite{BL}.
\end{example}

\begin{example}\label{ex:pr59 revisited}
	Suppose $\alpha\in\overline{\D}$, and let
$Z$ denotes multiplication by the variable $\zeta=e^{it}$ on the
space $\h=L^2([0,2\pi], dm)$ ($dm$ being normalized Lebesgue measure).
Define the operators $T(\alpha)=Z-(1-\alpha) Z1\otimes 1$;
by Lemma~\ref{le:one dim pert of isometries} they are contractions
for all $\alpha\in\D$ and unitary for $|\alpha|=1$. One can see
that $Z=T(1)$, while the proof of Theorem~\ref{th:abs cont unitaries are dominated},
in the case $\omega=[0,2\pi]$, shows that $Z\Prec T(0)$. To discuss in
more detail the class of all $T(\alpha)$s, we need the following well known
result concerning perturbations of unitary operators. The proof, which is a
computation, can be found, for instance, in~\cite[Proposition 1.3]{NT}.
	
	\begin{lemma}\label{le:nikolski treil}
		Suppose $U$ is unitary and $T=U-b\otimes a$ for some vectors $a,b$. Let $\lambda\in\C$ be such that $I-\lambda U$ is invertible, and denote $a_\lambda=\bar{\lambda} (I-\bar{\lambda}U^*)^{-1} a$. Then $I-\lambda T$ is invertible if and only if $1+\langle b, a_\lambda\rangle\not= 0$, in which case we have
		\begin{equation}\label{eq:rezolvent of T in terms of U}
		(I-\lambda T)^{-1} =(I-\lambda U)^{-1} \left( I-\frac{1}{1+\langle b, a_\lambda\rangle } b\otimes a_\lambda\right).
		\end{equation}
	\end{lemma}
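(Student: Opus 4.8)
The plan is to prove Lemma~\ref{le:nikolski treil} by a direct computation, treating both the invertibility criterion and the resolvent formula together via the Sherman--Morrison approach adapted to the rank-one perturbation $b\otimes a$.

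First I would write $I-\lambda T = (I-\lambda U) + \lambda\, b\otimes a$ and factor out $I-\lambda U$ on the left, which is legitimate since $I-\lambda U$ is assumed invertible. This gives
\[
I-\lambda T = (I-\lambda U)\bigl( I + \lambda (I-\lambda U)^{-1} b\otimes a\bigr).
\]
Now the operator $\lambda (I-\lambda U)^{-1} b\otimes a$ is again rank one: since for any operator $X$ one has $X(b\otimes a) = (Xb)\otimes a$, we get $\lambda (I-\lambda U)^{-1} b\otimes a = \bigl(\lambda (I-\lambda U)^{-1} b\bigr)\otimes a$. At this point I would introduce the vector $a_\lambda$; note that the statement defines $a_\lambda = \bar\lambda (I-\bar\lambda U^*)^{-1} a$, and one checks $\bigl(\lambda (I-\lambda U)^{-1} b,\ \cdot\ \bigr)$-type inner products match $\langle\,\cdot\,, a_\lambda\rangle$ paired against $b$; more directly, the relevant scalar appearing in the rank-one inversion of $I + u\otimes a$ is $1+\langle u, a\rangle$ with $u = \lambda(I-\lambda U)^{-1}b$, and the identity $\langle \lambda (I-\lambda U)^{-1} b, a\rangle = \langle b, \overline{\lambda}(I-\overline{\lambda}U^*)^{-1} a\rangle = \langle b, a_\lambda\rangle$ is just the adjoint relation $((I-\lambda U)^{-1})^* = (I-\bar\lambda U^*)^{-1}$. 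Thus the scalar governing invertibility is exactly $1+\langle b, a_\lambda\rangle$.

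Next I would invoke the standard fact that an operator of the form $I + u\otimes v$ is invertible if and only if $1+\langle u, v\rangle \neq 0$, with inverse $I - \frac{1}{1+\langle u,v\rangle} u\otimes v$ in that case. Applying this with $u = \lambda(I-\lambda U)^{-1}b$ and $v = a$, we conclude that $I+\lambda(I-\lambda U)^{-1} b\otimes a$ is invertible iff $1+\langle b, a_\lambda\rangle \neq 0$, hence so is $I-\lambda T$ (the left factor being already invertible), and
\[
(I-\lambda T)^{-1} = \left( I - \frac{1}{1+\langle b, a_\lambda\rangle}\bigl(\lambda (I-\lambda U)^{-1} b\bigr)\otimes a\right)(I-\lambda U)^{-1}.
\]
To match the asserted formula~\eqref{eq:rezolvent of T in terms of U}, which has $(I-\lambda U)^{-1}$ on the \emph{outside left}, I would instead factor $I-\lambda U$ out on the \emph{right}: $I-\lambda T = \bigl(I + \lambda\, b\otimes a (I-\lambda U)^{-1}\bigr)(I-\lambda U)$, observe $b\otimes a (I-\lambda U)^{-1} = b\otimes \bigl(((I-\lambda U)^{-1})^* a\bigr) = b\otimes \bigl((I-\bar\lambda U^*)^{-1} a\bigr)$, and note $\lambda (I-\bar\lambda U^*)^{-1} a$ relates to $a_\lambda$. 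Then the rank-one inversion lemma gives the inverse with $(I-\lambda U)^{-1}$ on the left, and a short rearrangement of the scalar coefficient and the rank-one term yields precisely~\eqref{eq:rezolvent of T in terms of U}.

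I do not expect a genuine obstacle here: the only care needed is bookkeeping — being consistent about whether the perturbation is factored on the left or the right of $I-\lambda U$, and correctly tracking how $a_\lambda$ (which carries an extra $\bar\lambda$ and a $U^*$) enters the scalar $\langle b, a_\lambda\rangle$ versus the rank-one operator $b\otimes a_\lambda$ in the final formula. Since the paper explicitly says this is a known computational result with a reference to~\cite[Proposition 1.3]{NT}, I would keep the write-up brief: state the left/right factorization, cite the elementary rank-one inversion identity, and record the resulting formula, possibly just referring to~\cite{NT} for the detailed algebra.
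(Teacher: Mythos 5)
Your computation is correct, and it is essentially the argument the paper has in mind: the paper gives no proof of Lemma~\ref{le:nikolski treil} beyond citing \cite[Proposition 1.3]{NT}, and what that reference carries out is exactly this Sherman--Morrison rank-one inversion. The cleanest single line is your right factorization, since $\lambda\, b\otimes a\,(I-\lambda U)^{-1}=b\otimes\bigl(\bar{\lambda}(I-\bar{\lambda}U^{*})^{-1}a\bigr)=b\otimes a_\lambda$ gives $I-\lambda T=(I+b\otimes a_\lambda)(I-\lambda U)$ directly, from which the invertibility criterion and formula~\eqref{eq:rezolvent of T in terms of U} follow at once from the identity $(I+u\otimes v)^{-1}=I-\tfrac{1}{1+\langle u,v\rangle}u\otimes v$.
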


We want to apply this result to obtain $(I-\bar{\lambda}T(\alpha))^{-1}$ for $\lambda\in\D$. We take $U=Z$,  $a(\zeta)=1$, $b(\zeta)=(1-\alpha)Za=(1-\alpha)\zeta$. In this case $a_{\bar\lambda}(\zeta)=\lambda(1-\lambda\bar{\zeta})^{-1}\in\overline{H^2}$, so $\langle b, a_\lambda\rangle=0$,
$I-\bar\lambda T(\alpha)$ is invertible and
\begin{equation}\label{eq:rezolvent for T(alpha)_lambda}
	((I-\bar\lambda T(\alpha))^{-1}f)(\zeta)=\frac{1}{1-\bar{\lambda}\zeta}\Big( f(\zeta) -\langle f,\lambda(1-\lambda\bar{\zeta})^{-1}\rangle (1-\alpha)\zeta\Big).
\end{equation}

\begin{proposition}\label{pr:T_alpha}
	All contractions $T(\alpha)$ with $|\alpha|<1$ are Harnack equivalent, and they all Harnack dominate the unitary operators  $T(\alpha')$ with $|\alpha|=1$ (in particular, they dominate $Z$).
\end{proposition}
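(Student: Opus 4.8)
The plan is to treat the two assertions separately, since in the second one neither operator is an isometry. Two structural facts about these operators, both already available, feed into everything: by Lemma~\ref{le:one dim pert of isometries}, $D_{T(\alpha)}^2=(1-|\alpha|^2)\,1\otimes 1$, hence $\|D_{T(\alpha)}h\|^2=(1-|\alpha|^2)|\langle h,1\rangle|^2$; and $T(\alpha)-T(\beta)=(\alpha-\beta)\,Z1\otimes 1$, so $(T(\alpha)-T(\beta))h=(\alpha-\beta)\langle h,1\rangle\,\zeta$.

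\emph{Domination of the unitaries $T(\alpha')$, $|\alpha'|=1$.} Fix such an $\alpha'$ and an $\alpha$ with $|\alpha|<1$. Since $T(\alpha')$ is unitary, hence an isometry, I would invoke the converse half of Theorem~\ref{thm:res} and simply verify the resolvent estimate~\eqref{eq:res estim} for $T=T(\alpha')$, $T'=T(\alpha)$. Applying~\eqref{eq:rezolvent for T(alpha)_lambda} with parameter $\lambda$ and with $\alpha'$ in place of $\alpha$, to the function $\zeta$, one finds $(I-\lambda T(\alpha'))^{-1}\zeta=\zeta/(1-\lambda\zeta)$: the correction term is a multiple of the inner product of $\zeta$ with a function in $\overline{H^2}$, and $\zeta\perp\overline{H^2}$. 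Hence $(I-\lambda T(\alpha'))^{-1}(T(\alpha')-T(\alpha))h=(\alpha'-\alpha)\langle h,1\rangle\,\zeta/(1-\lambda\zeta)$, and taking $L^2$-norms, together with $\int_\T|1-\lambda\zeta|^{-2}\,dm=(1-|\lambda|^2)^{-1}$, gives~\eqref{eq:res estim} with constant $c=|\alpha'-\alpha|^2/(1-|\alpha|^2)$. By Theorem~\ref{thm:res}, $T(\alpha')\Prec T(\alpha)$; the case $\alpha'=1$ reads $Z\Prec T(\alpha)$.

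\emph{Harnack equivalence of the $T(\alpha)$, $|\alpha|<1$.} Fix $\alpha,\beta\in\D$; by symmetry it suffices to prove $T(\alpha)\Prec T(\beta)$. Since $T(\alpha)$ is not an isometry I would pass to the M\"obius transforms and use the second part of Theorem~\ref{thm:1.3}: it is enough to show $T(\alpha)_\lambda\PrecZ T(\beta)_\lambda$ with a constant independent of $\lambda\in\D$, and by Theorem~\ref{thm:1.2} this reduces to the uniform estimates $\|D_{T(\alpha)_\lambda}x\|\le c'\|D_{T(\beta)_\lambda}x\|$ and $\|(T(\beta)_\lambda-T(\alpha)_\lambda)x\|\le c'\|D_{T(\beta)_\lambda}x\|$. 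Following the bookkeeping in the proof of Theorem~\ref{thm:res}, put $h_\gamma=(I-\bar\lambda T(\gamma))^{-1}x$, so that $\|D_{T(\gamma)_\lambda}x\|^2=(1-|\lambda|^2)(1-|\gamma|^2)|\langle h_\gamma,1\rangle|^2$, and use~\eqref{eq:T'lambda-Tlambda} for the difference. The key point is that the correction term in~\eqref{eq:rezolvent for T(alpha)_lambda} contributes nothing to $\langle h_\gamma,1\rangle$ (it is a multiple of $\zeta/(1-\bar\lambda\zeta)$, which is orthogonal to the constants), so $\langle h_\alpha,1\rangle=\langle h_\beta,1\rangle=:\Phi_\lambda(x)$ does not depend on the parameter; and a short computation from~\eqref{eq:rezolvent for T(alpha)_lambda} gives $h_\alpha-h_\beta=\bar\lambda(\alpha-\beta)\Phi_\lambda(x)\,\zeta/(1-\bar\lambda\zeta)$, whence $\|(T(\alpha)_\lambda-T(\beta)_\lambda)x\|^2=(1-|\lambda|^2)|\alpha-\beta|^2|\Phi_\lambda(x)|^2$. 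Comparing these three quantities, both estimates hold with a constant $c'$ depending only on $\alpha,\beta$ (one may take $(c')^2=\max\{1,(1-|\alpha|^2)/(1-|\beta|^2),|\alpha-\beta|^2/(1-|\beta|^2)\}$); the value $\lambda=0$, where~\eqref{eq:T'lambda-Tlambda} is not available, is just the direct statement $T(\alpha)\PrecZ T(\beta)$, obtained the same way. Then Theorem~\ref{thm:1.3} gives $T(\alpha)\Prec T(\beta)$, and the symmetric argument completes the equivalence.

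\emph{Main obstacle.} I do not expect a genuine obstruction: the argument is a careful exploitation of the explicit resolvent~\eqref{eq:rezolvent for T(alpha)_lambda}, with the recurring convenience that $\zeta\perp\overline{H^2}$ annihilates every correction term that occurs. The one point requiring real attention is the equivalence: one must ensure the Z-domination constant for the M\"obius transforms is truly \emph{uniform} in $\lambda$, so that Theorem~\ref{thm:1.3} applies — which is precisely why one wants the identities above, in which $\lambda$ enters only through the scalar $\Phi_\lambda(x)$ and the common factor $1-|\lambda|^2$ — and one should not overlook the separate treatment of $\lambda=0$.
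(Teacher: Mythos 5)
Your proof is correct, and the half dealing with the Harnack equivalence of the $T(\alpha)$, $|\alpha|<1$, is essentially the paper's own argument: the same explicit resolvent \eqref{eq:rezolvent for T(alpha)_lambda}, the same observation that $\langle (I-\bar\lambda T(\gamma))^{-1}x,1\rangle$ is independent of $\gamma$, and the same three quantities \eqref{eq:defect T_lambda}, \eqref{eq:defect T'_lambda}, \eqref{eq:norm Tlambda-T'lambda} feeding into the converse direction of Theorem~\ref{thm:1.3}. Where you genuinely diverge is in the domination of the unitaries $T(\alpha')$, $|\alpha'|=1$: you route this through the converse half of Theorem~\ref{thm:res}, verifying the resolvent estimate \eqref{eq:res estim} directly, which is valid here precisely because the dominated operator is an isometry. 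This works and is a nice illustration of the resolvent criterion (it is in fact how the paper proves Theorem~\ref{th:abs cont unitaries are dominated}), but it is a detour you did not need: in your own uniform Z-domination computation the constant $(c')^2$ involves only $1-|\beta|^2$ for the \emph{dominating} parameter $\beta$, so the very same estimates remain finite when the \emph{dominated} parameter sits on the boundary. The paper exploits exactly this and disposes of both assertions in a single computation with $|\alpha|<1$, $|\alpha'|\le 1$. Your extra care about $\lambda=0$ (where \eqref{eq:T'lambda-Tlambda} degenerates) is a legitimate point that the paper glosses over; handling it by the direct statement $T(\alpha)\PrecZ T(\beta)$ is fine.
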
	

\begin{proof}
	Take $\alpha, \alpha'\in \overline{\D}$, and denote, to simplify notation, $T=T(\alpha), T'=T(\alpha')$.
To discuss Harnack domination, we intend to apply Theorem~\ref{thm:1.3}, so we have to make some computations related to the M\"obius transforms of $T$ and $T'$.  First, by Lemma~\ref{le:one dim pert of isometries} we have $D_{T}=(1-|\alpha|^2)1\otimes 1$ and thus
\begin{equation}\label{eq:defect T}
	\|D_{T}f\|^2=(1-|\alpha|^2) |\langle f, 1\rangle|^2.
\end{equation}
Since $\|D_{T_\lambda}f\|^2=(1-|\lambda|^2)\|D_T(I-\bar{\lambda}T)^{-1}f\|^2$, while, by~\eqref{eq:rezolvent for T(alpha)_lambda},
\[
\langle (I-\bar{\lambda}T)^{-1}f, 1\rangle =\langle (1-\bar{\lambda}\zeta)^{-1}f, 1\rangle,
\]
we have
\begin{equation}\label{eq:defect T_lambda}
\|D_{T_\lambda}f\|^2 = (1-|\lambda|^2)(1-|\alpha|^2) |\langle (1-\bar{\lambda}\zeta)^{-1}f, 1\rangle|^2.
\end{equation}
	Similarly,
\begin{equation}\label{eq:defect T'_lambda}
\|D_{T'_\lambda}f\|^2 = (1-|\lambda|^2)(1-|\alpha'|^2) |\langle (1-\bar{\lambda}\zeta)^{-1}f, 1\rangle|^2.
\end{equation}	
	
	From~\eqref{eq:T'lambda-Tlambda} and~\eqref{eq:rezolvent for T(alpha)_lambda} it follows that
\begin{equation}\label{eq:norm Tlambda-T'lambda}
\begin{split}
	\|	(T_\lambda - T'_\lambda)f\|^2& = \frac{(1-|\lambda|^2)^2}{|\lambda|^2} \| (I-\overline{\lambda} T)^{-1}f
	- (I-\overline{\lambda} T')^{-1}f \|^2\\
&	=\frac{(1-|\lambda|^2)^2}{|\lambda|^2}
\|\frac{\zeta}{1-\bar{\lambda}\zeta} \langle f,\lambda(1-\lambda\bar{\zeta})^{-1}\rangle (\alpha'-\alpha) \|^2\\
&= (1-|\lambda|^2)|\alpha'-\alpha|^2  |\langle (1-\bar{\lambda}\zeta)^{-1}f, 1\rangle|^2.
\end{split}
\end{equation}

It follows now from~\eqref{eq:defect T_lambda}, \eqref{eq:defect T'_lambda}, and \eqref{eq:norm Tlambda-T'lambda} that if $|\alpha|<1$ and $|\alpha'|\le 1$, then $T'_\lambda\PrecZ T_\lambda$ with constants independent of $\lambda$. By Theorem~\ref{thm:1.3} this proves the proposition.
\end{proof}

\end{example}

Theorem \ref{pr14} yields several properties of contractions that are preserved by Harnack domination. We will see below some other that are not necessarily preserved.

As seen in Theorem \ref{pr14},   strong stability is preserved by  Harnack domination in both senses.
This property appears in the canonical triangulation of a contraction $T$:
it is known from \cite{SFb} that $T$ has on $\h=\n(S_T)\oplus \overline{\R(S_T)}$ a triangulation of the form
\[
T=
\begin{pmatrix}
Q & \star\\
0 & W
\end{pmatrix}
\]
where $Q$ is of class $C_{0\cdot}$ on $\n(S_T)$ and $W$ is of
class $C_{1\cdot}$ on $\overline{\R(S_T)}$.

As we will show below, in contrast to  $C_{0\cdot}$, the class $C_{1\cdot}$ and the related ones $C_{\cdot1}$ and $C_{11}$ are not in general preserved by Harnack equivalence.

\begin{example}
		We will now look at Example~\ref{ex:pr59 revisited} from a different perspective.
		By considering the standard isomorphism between $L^2([0,2\pi], dm)$ and $\ell^2_{\Z}$, one may describe it in terms of weighted bilateral shifts. Moreover, since Harnack domination is preserved by taking direct sums, one can also consider vector valued sequence spaces $\ell^2_{\Z}(\e)$. We define then, for $\alpha\in\bar{\D}$, the contractions $\tau(\alpha)$ by
		$$
		\tau(\alpha)(...,h_{-1},\boxed{h_0},h_1,...)=(...,h_{-2},\boxed{h_{-1}}, \alpha h_0, h_1,...)
		$$
		for $\{h_n\}_{n\in \Z}\in \ell^2_{\Z}(\e)$. Here the components of a vector in $\ell^2_{\Z}(\e)$
		are arranged in order of increasing subscripts,
		the central component (i.e., the one with subscript $0$) being framed in a box.
		
		Then $\tau(\alpha)$ is unitarily equivalent to $T(\alpha)$. So all $\tau(\alpha)$s are Harnack equivalent for $|\alpha|<1$, and they all dominate the unitary operators $\tau(\alpha)$ with $|\alpha|=1$ (in particular the multivariate bilateral shift, which corresponds to $\alpha=1$).
		
		This approach allows us to obtain more properties of $\tau(\alpha)$. Thus, for $|\alpha|<1$, $\tau(\alpha)$ is completely nonunitary, since one sees easily that for a nonzero element $x\in\ell^2_{\Z}(\e)$  we cannot have $\|\tau(\alpha)^n x\|=\|\tau(\alpha)^*{}^n x\|=\|x\|$ for all $n\in\N$. For $\alpha\not=0$ $\tau(\alpha)$ is invertible, while $\tau(0)$ is unitarily equivalent to the partial isometry $S\oplus S^*$. In particular, this shows, in contrast to Theorem~\ref{pr812}, that the whole spectrum is not preserved by Harnack equivalence, since $0\in\sigma(\tau(0))$, but $0\not\in\sigma(\tau(\alpha))$ for $\alpha\not=0$.
		
		According to \cite{SFb}, a contraction $T$ is called a {\it weak contraction} if $\sigma(T)$
		does not fill in the closed unit disc $\overline{\D}$ and its defect operator $D_T$ is of finite trace.
		If $\dim\e<\infty$, then $\tau(\alpha)$ is a weak contraction only for $\alpha\not=0$, but not for $\alpha=0$.
So weak contractions are not preserved by Harnack equivalence.
		
		We may also compute the asymptotic limit $S_{\tau(\alpha)}$. Indeed, we have
		$$
		\tau(\alpha)^{*n}\tau(\alpha)^nh=(...,h_{-n},|\alpha |^2h_{-n+1},...,\boxed{|\alpha|^2h_0},h_1,h_2,...)
		$$
		and consequently
		$$
		S_{\tau(\alpha)}h=(...,|\alpha|^2h_{-n},...,\boxed{|\alpha|^2h_0},h_1,h_2,...)
		$$
		for $h=\{h_n\}\in \ell^2_{\Z}(\e)$, $\alpha \in \D$. The two operators displayed above are
		diagonal with respect to the standard basis of $\ell^2_{\Z}(\e)$. For $\alpha=0$ the operator $S_{\tau(\alpha)}$ is thus a
nontrivial orthogonal projection, while for $\alpha\not=0$ it is an invertible positive operator.
This is equivalent to saying that $\tau(\alpha)$ is of class $C_{1\cdot}$ for $\alpha\not=0$,
but not for $\alpha=0$. Therefore the class $C_{1\cdot}$ is not preserved by Harnack equivalence.
One can show similarly that $\tau(\alpha)$ is actually in $C_{11}$, but $\tau(0)$ is neither
in $C_{1\cdot}$ nor in $C_{\cdot1}$. Also, the class of operators whose asymptotic limit is an
orthogonal projection is not preserved by Harnack equivalence.

Denote now $T=\tau(0)$. With respect to the decomposition  $\ell^2_{\Z}(\e)= \n(I-S_T)\oplus\n(S_T)$ we may
write $T=S\oplus S^*$, where $S$ is the unilateral shift of multiplicity $\dim \e$. We can then
obtain some more information on the Harnack class of $T$.

\begin{proposition}\label{pr510}
	Each contraction $T'$ in the  Harnack part of $T$ has the form
	$$
	T'=
	\begin{pmatrix}
	S & W\\
	0 & S^*
	\end{pmatrix}
	$$
	with $S^*W=S^*W^*=0$.
\end{proposition}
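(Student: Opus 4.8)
The plan is to use Lemma~\ref{pr11} and Corollary~\ref{co12} to pin down the block form of $T'$, and then to extract the two relations on $W$ from contractivity of $T'$ alone.

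First I would set $\h_+=\n(I-S_T)$ and $\h_-=\n(S_T)$. As noted just before the statement, $\ell^2_{\Z}(\e)=\h_+\oplus\h_-$ and $T=S\oplus R^*$ with respect to this decomposition, where $S$ is the unilateral shift of multiplicity $\dim\e$ on $\h_+$ and $R$ is the unilateral shift on $\h_-$ (so the $(2,2)$ block $R^*$, the backward shift on $\h_-$, is the ``$S^*$'' of the statement); consequently $T^*=S^*\oplus R$. Since $S^*$ (on $\h_+$) is of class $C_{0\cdot}$ and $R$ (on $\h_-$) is an isometry, we get $S_{T^*}=0\oplus I_{\h_-}$, hence $\n(I-S_{T^*})=\h_-$. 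Now let $T'$ be in the Harnack part of $T$. By Corollary~\ref{co12}, $\n(I-S_{T'})=\n(I-S_T)=\h_+$ and $\n(I-S_{T'^*})=\n(I-S_{T^*})=\h_-$. Applying Lemma~\ref{pr11}~(ii) to the domination $T\Prec T'$ gives $T=T'$ on $\n(I-S_{T'})=\h_+$, and applying it to $T^*\Prec T'^*$ (valid by Lemma~\ref{le:simple properties of Harnack domination}~(iv)) gives $T^*=T'^*$ on $\n(I-S_{T'^*})=\h_-$. Since $\h_+$ is $T$-invariant with $T|\h_+=S$, the first equality shows that the $(1,1)$ and $(2,1)$ blocks of $T'$ relative to $\h_+\oplus\h_-$ equal $S$ and $0$; since $\h_-$ is $T^*$-invariant with $T^*|\h_-=R$, the second equality shows that the $(2,2)$ block of $T'$ equals $R^*$. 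Hence
\[
T'=\begin{pmatrix} S & W \\ 0 & R^* \end{pmatrix}
\]
for some $W\in\mathcal B(\h_-,\h_+)$, which is exactly the form asserted.

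It then remains to prove $S^*W=0$ and $R^*W^*=0$, and here I would use only that $T'$ is a contraction. Using $S^*S=I$, a short computation gives
\[
I-T'^*T'=\begin{pmatrix} 0 & -S^*W \\ -W^*S & I-RR^*-W^*W \end{pmatrix}\ \ge\ 0.
\]
A positive $2\times2$ block operator matrix with vanishing $(1,1)$ corner has vanishing off-diagonal corner (otherwise the form evaluated at $(t\,S^*Ww)\oplus w$ tends to $-\infty$ as $t\to+\infty$ for a suitable $w$), so $S^*W=0$; positivity of the remaining $(2,2)$ corner then yields $W^*W\le I-RR^*$. As $R$ is an isometry, $I-RR^*$ is the orthogonal projection onto $\n(R^*)=\h_-\ominus R\h_-$, so this forces $Ww=0$ for every $w\in R\h_-$, i.e.\ $WR=0$, equivalently $R^*W^*=0$. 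This completes the proof.

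I do not anticipate a genuine obstacle; the one point requiring care is the bookkeeping in the first step, where Corollary~\ref{co12} must be combined with Lemma~\ref{pr11}~(ii) applied to \emph{both} $(T,T')$ and $(T^*,T'^*)$ in order to fix all four blocks of $T'$, not merely its first block column.
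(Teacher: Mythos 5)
Your proof is correct and follows essentially the same route as the paper's: Lemma~\ref{pr11} together with Corollary~\ref{co12} pins down three of the four blocks of $T'$, and contractivity of $T'$ then forces the two conditions on $W$. The only (harmless) variations are that for the $(2,2)$ block the paper restricts to the $T^*$- and $T'^*$-invariant subspace $\n(S_T)$ via Lemma~\ref{le:simple properties of Harnack domination}~(iii) and invokes the fact that the Harnack part of an isometry is a singleton, whereas you apply Lemma~\ref{pr11}~(ii) to the adjoint pair $T^*\Prec T'^*$; and you spell out the contractivity computation that the paper leaves as ``one checks easily''.
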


\begin{proof}
	Let $T'$ be in the Harnack part of $T$. Then by Lemma \ref{pr11} one has $\n(I-S_T)=\n(I-S_{T'})$
	and $T'=T=S$ on this kernel. Also, by Lemma \ref{le:simple properties of Harnack domination} (iii),
	$T'^*|_{\n(S_T)}$ is Harnack equivalent to $S$, hence $T'^*=S$ on $\n(S_T)$. We conclude that $T'$ has the
	desired matrix representation. For $T'$ to be a contraction, one checks easily that we must have $S^*W=S^*W^*=0$.
\end{proof}

\end{example}

\begin{remark}\label{re513}

Proposition \ref{pr510} gives the matrix structure of contractions in the Harnack part of $T=\tau(0)$.
The condition $S^*W=S^*W^*=0$ means that with respect to the decomposition $\h=\n (S^*)\oplus \n (S^*)^\perp $ we have $W=W_0\oplus 0$, with $W_0$ contractive. It is necessary, but in general not sufficient for $T'$ to be Harnack equivalent to $T$. The case $T'=T(\alpha)$, with $|\alpha|<1$, corresponds to $W_0=\alpha I_{\n( S^*)}$.
If $\mathcal{E}=\C$ we obtain then that the Harnack part of $T(0)$  is precisely
the set of  $T(\alpha)$ with $|\alpha|<1$. It would be interesting to characterize
in the general case $\dim\e>1$ the class of $W_0$ for which $T'$ is in the Harnack part of $T$.
\end{remark}

\bigskip

\bigskip

\end{document}